\renewcommand{\theHALG@line}{\thealgorithm.\arabic{ALG@line}}
\newcommand{\R}{\mathbb{R}}
\newcommand{\Z}{\mathbb{Z}}
\newcommand{\blank}{{-}}
\newcommand{\Hom}{\mathrm{Hom}}
\newcommand{\Mod}{\mathrm{Mod}}
\newcommand{\fin}{\mathrm{fin}}
\newcommand{\Ob}{\mathrm{Ob}}
\newcommand{\Fun}{\mathrm{Fun}}
\newcommand{\id}{\mathrm{id}}
\newcommand{\op}{\mathrm{op}}
\newcommand{\Dnk}[2]{\Delta_{\hspace{1pt}#1}^{\hspace{-2pt}(#2)}}
\DeclareMathOperator{\im}{im}
\DeclareMathOperator{\supp}{supp }
\DeclareMathOperator{\coker}{coker}
\DeclareMathOperator{\Proj}{proj}
\DeclareMathOperator{\Incl}{incl}
\DeclareMathOperator{\Span}{span}
\DeclareMathOperator{\St}{St}
\DeclareMathOperator{\Cl}{Cl}
\DeclareMathOperator{\stcplx}{SC}
\newcommand{\Rfunc}{\mathcal{I}} 
\newcommand{\Vect}{{\textbf{Vect}}}
\newcommand{\vect}{{\textbf{vect}}}
\definecolor{mypurple}{rgb}{0.44, 0.16, 0.39}
\definecolor{mygreen}{rgb}{0,0.55,0.5}
\definecolor{myred}{rgb}{0.9, 0.1, 0.1}
\definecolor{myorange}{rgb}{1.0, 0.5, 0.31}
\definecolor{myblue}{rgb}{0.1, 0.3, 0.7}
\newcommand\quotient[2]{
        \mathchoice
            {
                \text{\raise1ex\hbox{$#1$}\Big/\lower1ex\hbox{$#2$}}%
            }
            {
                #1\,/\,#2
            }
            {
                #1\,/\,#2
            }
            {
                #1\,/\,#2
            }
    }
\newcommand{\setdef}[2]{
		\left\{ #1 \,\middle|\,#2 \right\}
}
\newcommand{\scriptveryshortarrow}[1][4pt]{{%
    \vcenter{\hbox{\rule[\scriptratio\dimexpr-.2pt\relax]
              {\scriptratio\dimexpr#1\relax}{\scriptratio\dimexpr.4pt\relax}}}%
  \mkern-4mu\hbox{\let\f@size\sf@size\usefont{U}{lasy}{m}{n}\symbol{41}}}}
\newcommand{\smap}[3]{{#1}(#2\leq #3)}
\algnewcommand\algorithmicforeach{\textbf{for each}}
\newcommand{\makeexact}{{\sc MakeExact}}
\theoremstyle{definition}
\newtheorem{theorem}{Theorem}
\numberwithin{theorem}{section}
\newtheorem{proposition}[theorem]{Proposition}
\newtheorem{lemma}[theorem]{Lemma}
\newtheorem{corollary}[theorem]{Corollary}
\newtheorem*{remark}{Remark}
\newtheorem{definition}[theorem]{Definition}
\newtheorem{example}[theorem]{Example}
\numberwithin{equation}{section}
\journal{Journal of Pure and Applied Algebra}
\begin{document}
\begin{frontmatter}
\tnotetext[t1]{This project has received funding from the European Research Council (ERC) under the European Union's Horizon 2020 research and innovation programme, grant no.\ 788183, from the Wittgenstein Prize, Austrian Science Fund (FWF), grant no.\ Z 342-N31, and from the DFG Collaborative Research Center TRR 109, `Discretization in Geometry and Dynamics', Austrian Science Fund (FWF), grant no.\ I 02979-N35.}
\title{Discrete Microlocal Morse Theory\tnoteref{t1}}

\author[inst1]{Adam Brown\corref{cor1}}
\ead{abrown@ist.ac.at}
\affiliation[inst1]{organization={Institute of Science and Technology Austria},
            addressline={Am Campus 1}, 
            city={Klosterneuburg},
            postcode={3400}, 
            country={Austria}}
\cortext[cor1]{Corresponding author}
\author[inst1]{Ond\v{r}ej Draganov}
\ead{ondrej.draganov@ist.ac.at}

\begin{abstract}
    We establish several results combining discrete Morse theory and microlocal sheaf theory in the setting of finite posets and simplicial complexes. Our primary tool is a computationally tractable description of the bounded derived category of sheaves on a poset with the Alexandrov topology.
    We prove that each bounded complex of sheaves on a finite poset admits a unique (up to isomorphism of complexes) minimal injective resolution, and we provide algorithms for computing minimal injective resolution of an injective complex, as well as several useful functors between derived categories of sheaves. For the constant sheaf on a simplicial complex, we give asymptotically tight bounds on the complexity of computing the minimal injective resolution using those algorithms. 
    Our main result is a novel definition of the discrete microsupport of a bounded complex of sheaves on a finite poset. We detail several foundational properties of the discrete microsupport, as well as a microlocal generalization of the discrete homological Morse theorem and Morse inequalities. 
\end{abstract}

\begin{keyword}
derived sheaf theory \sep discrete Morse theory \sep computational topology \sep microsupport \sep 55N30\sep  55N31\sep  55-08



\end{keyword}

\end{frontmatter}

\setcounter{tocdepth}{3}
\tableofcontents

\newpage

\section{Introduction}
A motif of twentieth-century mathematics is the investigation of global structures via local inquiry. Here, we build on two instances of this global-local theme. 

The first instance is Morse theory. Classically, Morse theory studies the global topological structure of a manifold by (locally) analyzing critical points of a real-valued smooth function on the manifold~\cite{Milnor,Morse}. Morse theory has since been generalized to stratified spaces~\cite{GoreskyMacPherson1988} and applied to many areas of mathematics. Forman applied these ideas to study cell complexes, and in doing so initiated the study of discrete Morse theory ~\cite{Forman2002,Forman1998}. Forman's reformulation has since led to many advances in applied and computational topology~\cite{Nanda2019,NandaTamakiTanaka,DotkoWagner,HarkerMischaikowMrozekNanda,MischaikowNanda,Skoldberg,Freij2009,BauerEdelsbrunner2016}. 

The second instance is sheaf theory, the poster child of the global-local motif. Introduced by Leray~\cite{Leray} and modernized by Grothendieck, with numerous contributions from Cartan, Serre, Verdier, Deligne, and many others, sheaf theory is a general framework combining homological algebra and topology (see `A Short History: Les d\'ebuts de la th\'eorie des faisceaux' by Houzel in \cite{KashiwaraSchapira1994} for a beautiful synopsis of the historical development of sheaf theory). Analogous to the recent expansion of discrete Morse theory, a growing body of research studies sheaves on cell complexes with applications in applied and computational topology~\cite{ MacPhersonPatel,Nanda,CurryPatel, CurryMukherjeeTurner,Curry2018,CurryGhristNanda2016,Curry2014}, although sheaves on finite posets were already studied earlier, e.g., in \cite{Deheuvels1962}. However, the utility of sheaf theory is most fully realized when working in the setting of derived categories. Computational aspects of derived cellular sheaf theory are explored in~\cite{BerkoukGinot,BerkoukPetit2021,BerkoukGinotOudot, Curry2014,Shepard1985}. Despite these advances, algorithms for many useful computations within derived cellular sheaf theory remain (to the best of our knowledge) underdeveloped.  

At the intersection of Morse theory and derived sheaf theory lies microlocal geometry. The `microlocal' perspective of sheaf theory originates from Sato's study of singularities within systems of linear differential equations~\cite{SatoKawaiKashiwara,Sato}. This perspective considers the propagation of various local structures (such as local solutions of differential equations or homological vanishing properties of sheaves) along different directions within (the cotangent bundle of) a manifold. Microlocal geometry (and its relatives) have far-reaching applications throughout modern mathematics, including intersection cohomology and perverse sheaf theory~\cite{Goresky2020,GoreskyMacPherson1981}, enumerative geometry~\cite{Behrend}, symplectic geometry~\cite{NadlerZaslow}, and representation theory~\cite{ABV} (to name a few). 

This paper introduces a discrete analog of two important concepts of microlocal geometry, originally introduced by Kashiwara--Schapira for manifolds. The first is the notion of microsupport, which quantifies the cohomological vanishing properties of sheaves on `local half-spaces'~\cite{KashiwaraSchapira1985,KashiwaraSchapira1982}. The second is a microlocal generalization of the Morse theorem and inequalities, which exchanges singular homology and Euler characteristics with hypercohomology and the Euler--Poincare index~\cite{SchapiraTose,Kashiwara1983,GoreskyMacPherson1981}. Before concluding this introduction with a summary of these results, we first give a brief sketch of an indispensable prerequisite: a systematic treatment of computational derived sheaf theory.

\paragraph{Computational Derived Sheaf Theory} Sheaves use algebra to model relationships between local and global properties of a topological space. When the topological space is a poset with the Alexandrov topology, a \emph{sheaf} (of finite-dimensional vector spaces), $F$, is defined by associating a finite-dimensional vector space, $F(\sigma)$, to each element, $\sigma$, and a linear map, $F(\sigma\le\tau):F(\sigma)\rightarrow F(\tau)$, to each relation, $\sigma\le \tau$ (subject to commutativity requirements, see Definition \ref{def:sheaf}). The utility of this definition is also its foil: the high level of generality encompasses many pathologies. 

A common strategy for analyzing such a complicated mathematical structure is to approximate or represent it with a collection of simpler, or at least more familiar, objects; the goal is to reframe questions concerning the complex structure as questions about the building blocks that represent it.
The first tool of this paper is a particular instance of this phenomenon: \emph{injective resolutions}. An \emph{injective resolution} represents a given sheaf (much like a Fourier series represents a periodic function) with an exact sequence, $0\rightarrow F\rightarrow I^0\rightarrow I^1\rightarrow I^2\rightarrow  \cdots$, of \emph{injective sheaves} $I^d$, which admit many desirable properties (see, for example, Lemma \ref{lem:coker-injective}, Proposition \ref{prop:decomposition_of_injective_sheaves}, and Lemma \ref{lem:maps_between_injective_sheaves}). Efficient algorithms for computing these sequences are a first step toward applying well-established and powerful theoretical results from derived sheaf theory to computational topology. In this paper, we aim to present this theory in an explicit and computationally amenable framework. 


Injective resolutions are used to study sheaves from the ‘derived’ perspective, i.e.\ as objects in a derived category (Definition \ref{def:derived-category}). 
These derived categories unify and generalize many variants of (co)homology, such as simplicial cohomology, Borel--Moore homology, intersection cohomology, etc. For example, simplicial cohomology (and level-set persistent cohomology, see~\cite{BerkoukGinotOudot}) can be computed from an injective resolution of the constant sheaf (see Example \ref{ex:constant-sheaf} and Section \ref{sec:examples}), illustrating that even an injective resolution of the constant sheaf contains subtle topological information. Several recent works point to the potential benefits of applying derived sheaf theory to the study of persistent homology~ \cite{BerkoukPetit2021,BerkoukGinotOudot,BerkoukGinot,Curry2014,Schapira,KS2021}. We approach this subject from a computational perspective to help bridge gaps between applied topology and derived sheaf theory. With this goal in mind, we aim to limit the mathematical prerequisites of our approach whenever possible (a choice that often comes at the cost of brevity).


\paragraph*{Main Results}
Broadly, this paper develops computational methods for the bounded derived category sheaves of finite-dimensional vector spaces on finite posets with the Alexandrov topology. Our main contributions are:
\begin{enumerate}
     \item We establish the existence and uniqueness of a \emph{minimal} injective resolution of a given complex of sheaves (Theorem \ref{thm:minimal_injective_complex}). We formulate an inductive algorithm for computing minimal injective resolutions (Algorithm~\ref{algo:injective_resolution_tail}) using elementary methods from linear algebra. For the constant sheaf on a simplicial complex, we give an asymptotically tight bound on the complexity of Algorithm \ref{algo:injective_resolution_tail} (Proposition \ref{prop:upper_bound_on_SC} and Corollary \ref{cor:complexity}). 
     \item We give an explicit description of the set of objects and morphisms for (a skeleton of) the bounded derived category of sheaves on a finite poset in terms of `labeled matrices' (Definition \ref{def:derived-category}), and provide algorithms for computing right derived (proper) pushforward and (proper) pullback functors along order preserving maps of posets (Section \ref{sec:derived-functors}). See~\ref{app:section:correspondence} for the correspondence between Definition~\ref{def:derived-category} and the standard definition of a derived category.
     \item We give a novel definition of the discrete microsupport for a complex of sheaves in the derived category of sheaves on a finite poset (Definition \ref{def:microsupport}), and prove a microlocal generalization of the discrete Morse theorem and inequalities (Theorem \ref{thm:microlocal-morse} and \ref{thm:micro-morse-ineq}).
     \item A Python software package implementing the algorithms developed in this paper is freely available at \url{https://github.com/OnDraganov/desc}~\cite{BrownDraganov}. It demonstrates the utility of our approach---for a simplicial complex with a few thousands of simplices, the computation of the minimal injective resolution of the constant sheaf as well as (proper) pushforwards/pullbacks of said resolution take just several seconds on a regular laptop.
     

\end{enumerate}
One pedagogical benefit of our approach is that while sheaf theory is an indispensable component of our proofs, the computations we describe make no fundamental use of sheaves or category theory, hopefully making these calculations more accessible to anyone with knowledge of linear algebra and elementary poset combinatorics.

\paragraph{Comparison to Prior Work} Both derived sheaf theory and Morse theory are rich subjects that have been thoroughly studied for several decades. There are many textbooks on sheaf theory \cite{Bredon1997,KashiwaraSchapira1994,Iversen} and several publications which study sheaves on finite topological spaces. In \cite{Shepard1985}, Shepard relates sheaves on finite cell complexes (viewed as posets) to the classical setting of constructible sheaves on stratified topological spaces. In \cite{Ladkani2008, Ladkani}, Ladkani studies the homological properties of finite posets and introduces combinatorial criteria guaranteeing derived equivalences between categories of sheaves. In \cite{Curry2014}, Curry establishes a connection between sheaf theory and persistent homology. More recently, several publications expand on the work initiated by Curry on applications of derived sheaf theory to persistent homology~\cite{BerkoukGinot,BerkoukPetit2021,KS2021,BerkoukGinotOudot,Schapira}. In a set of lecture notes, Goresky beautifully explains derived and perverse sheaf theory in both the Whitney stratified and cellular settings~\cite{Goresky2021}. 
There are also several closely related works on discrete Morse theory. Notably, Nanda~\cite{Nanda2019} describes a categorical localization procedure for Morse-theoretic cellular simplification which relates to sheaf propagation and our definition of microsupport (Definition ~\ref{def:microsupport}). Additionally, Sk\"oldberg gives an algebraic approach to discrete Morse theory by studying reductions of chain complexes that preserve the homotopy type of the complex~\cite{Skoldberg}. This work is reminiscent of the derived category approach we take in Section \ref{sec:derived_categories}. Motivated by these advances and the potential to develop new techniques for computational topology, we aim to establish preliminary results on computational aspects of derived sheaf theory for finite topological spaces. The contributions of this paper are the first of our knowledge to use discrete Morse theory to study the microsupport of derived sheaves from a computational perspective.



\paragraph{Acknowledgements} We thank François Petit for the useful discussions and helpful feedback on an earlier version of the manuscript. We thank Herbert Edelsbrunner and Sebastiano Cultrera di Montesano for teaching a course on discrete Morse theory at the Institute of Science and Technology Austria in the spring of 2022, during which much of this work was completed.  


\section{Background and terminology}

In this paper we study finite-dimensional vector space valued sheaves on finite posets, with particular emphasis on face posets of simplicial complexes. To clarify the objects of study and terminology, we begin by recalling definitions and well-known results, drawing heavily from \cite{Curry2014,Curry2018,Shepard1985,Ladkani2008,Goresky2021}. Throughout the paper we fix a field $k$. In the examples we often choose $k=\mathbb{Z}_2$, the two-element field.

A \emph{poset} is a partially ordered set. We usually denote it by $\Pi$ with relation $\geq$. We only consider finite posets. For elements $\pi,\tau$ in a poset $\Pi$, we write $\pi<_1\tau$ if $\pi\lneq\tau$ and there is no other element between $\pi$ and $\tau$.
We use some of the standard terminology from simplicial complexes for general posets: the \emph{star} of an element $\sigma$ is $\St\sigma=\setdef{\tau\in\Pi}{\sigma\leq\tau}$, the \emph{boundary} is $\textrm{bnd}(\sigma)=\setdef{\tau\in\Pi}{\tau<_1\sigma}$, and the \emph{coboundary} is $\textrm{cobnd}(\sigma)=\setdef{\tau\in\Pi}{\sigma<_1\tau}$. A chain $\pi_0<\dots<\pi_n$ has \emph{length} $n$, and the \emph{height} of a poset is the greatest length over all its chains. This corresponds to dimension if $\Pi=(\Sigma,\subseteq)$ is the face poset of a simplicial complex. An \emph{abstract simplicial complex} $\Sigma$ on a vertex set $V$ is a system of subsets of $V$ such that $A\subseteq B\in \Sigma$ implies $A\in\Sigma$. The combinatorial aspects of posets are tied with geometrical concepts via Alexandrov topology.

\begin{definition}\label{def:alexandrov}
A finite poset $\Pi$ is a $T_0$-topological space with the \emph{Alexandrov topology}, where a set is \emph{open} when it is upwards closed: \[
U\subset\Pi \text{ is open if and only if } \tau\in U\text{ implies }\gamma\in U\text{ for each }\gamma\ge \tau.
\] 
The closure of a set $Z\subseteq \Pi$ is $\Cl Z = \setdef{\pi\in\Pi}{\pi\leq\sigma\text{ for some $\sigma\in Z$}}$.

\end{definition}

We study vector space representations of posets from the perspective of sheaf theory. Since we only consider finite dimensional vector spaces, what we call a \emph{sheaf} is equivalently also a quiver representation with relations, a module over a bound quiver algebra, a persistence module or a functor from the poset seen as a category to the category of vector spaces (see \cite[Theorem 4.2.10]{Curry2014}).

\begin{definition}
\label{def:sheaf}
A \emph{sheaf} $F$ on a finite poset $\Pi$ is an assignment of a finite-dimensional $k$-vector space $F(\pi)$ to each element $\pi\in\Pi$, and an assignment of a linear map
\[
F(\tau\le\gamma):F(\tau)\rightarrow F(\gamma)
\]
to each face relation $(\tau\le\gamma)\in\Pi$, such that 
\begin{enumerate}
    \item $F(\tau\le\tau)$ is the identity map, $\id_{F(\tau)}$, and
    \item $F(\tau\le\gamma)\circ F(\sigma\le\tau) = F(\sigma\le\gamma)$, for each triple $\sigma\le\tau\le\gamma\in\Pi$. 
\end{enumerate}
The \emph{support} of $F$ is the set of elements $\pi\in\Pi$ for which $F(\pi)\neq 0$.
\end{definition}

\begin{example}\label{ex:constant-sheaf}
The \emph{constant sheaf}, denoted $k_\Pi$, on a poset $\Pi$, assigns to each element $\pi\in\Pi$ the one-dimensional vector space, $k$, and to each relation, $(\pi\le \tau)\in\Pi$, the identity map $\id_k$. 
\end{example}

\begin{definition}
\label{def:natural-transformation}
A \emph{natural transformation}, $\eta:F\rightarrow G$, between two sheaves on $\Pi$, is a collection of linear maps 
$
\eta(\pi):F(\pi)\rightarrow G(\pi)
$
for each $\pi\in\Pi$, such that 
\[
G(\tau\le\gamma)\circ \eta(\tau) = \eta(\gamma)\circ F(\tau\le\gamma),
\]
for each $(\tau\le\gamma)\in \Pi$. For a natural transformation $\eta:F\rightarrow G$, the kernel, cokernel, image, and coimage are taken point-wise, defining sheaves on $\Pi$: 
\[(\ker\eta)(\pi):=\ker (\eta(\pi)),\qquad (\ker\eta)(\pi\le\tau):=F(\pi\le\tau)\vert_{\ker\eta(\pi)}.\]
Moreover, if $\ker\eta(\pi)=0$ for each $\pi\in\Pi$, we say that $\eta$ is \emph{injective}. We write $G/F \coloneqq\coker\eta$ if $\eta$ is an injection clear from the context.
\end{definition}

As kernels, images and cokernels, other concepts can also be easily defined point-wise on sheaves. Importantly the \emph{direct sum} of sheaves is defined as the direct sum at each $\pi\in\Pi$.

\begin{definition}\label{def:injective_sheaf_2}
A sheaf $I$ is called \emph{injective} if for each injective natural transformation $f: A\hookrightarrow B$, any given natural transformation $\alpha: A\rightarrow I$ can be extended to $\beta: B\rightarrow I$ so that $\alpha = \beta \circ f$.
\end{definition}

This condition is always satisfied for sheaves over a single point space (i.e., the assignment of a single vector space to a point): we can extend any linear map on a subspace to the whole space by mapping a complement space to 0. See appendix for examples of non-injective sheaves: Example~\ref{ex:non-injective_sheaf}, Example~\ref{ex:non-injective_sheaf_2}.

In the case of finite-dimensional sheaves over finite posets, each injective sheaf is created from simple building blocks---as a direct sum of \emph{indecomposable} injective sheaves. This allows us to store them efficiently, as discussed in Section~\ref{sec:labeled_matrices}. We recall standard definitions and results that can be found in \cite{Curry2014} and \cite{Shepard1985}. Although phrased for sheaves on cell complexes in those references, the generalization to any finite poset is straightforward. We revisit proofs of the standard claims below in \ref{app:sec:proofs_injective_sheaves}.

\begin{definition}[{cf.\,\cite[Definition 7.1.3]{Curry2014}, \cite[Chapter 23]{Goresky2021}}]
\label{def:indecomposable_injective_sheaf}
    For each $\pi\in\Pi$, we define an \emph{indecomposable injective sheaf} $[\pi]$ as
    
    \begin{minipage}{.4\textwidth}
        \begin{align*}
            [\pi](\sigma) \coloneqq
            \begin{cases}
                k &\text{ if $\sigma\leq \pi$,} \\
                0 &\text{ otherwise,}
            \end{cases}
        \end{align*}
    \end{minipage}
    \begin{minipage}{.1\textwidth}
        \vspace{.4cm}
        with
    \end{minipage}
    \begin{minipage}{.4\textwidth}
        \begin{align*}
            [\pi](\sigma\le \tau) \coloneqq
            \begin{cases}
                \id &\text{ if $\sigma\le\tau\leq \pi$,} \\
                0 &\text{ otherwise.}
            \end{cases}
        \end{align*}
    \end{minipage}
  
\end{definition}
For $n\in\mathbb{Z}_{\ge 0}$, we denote by $[\pi]^n$, the direct sum $\bigoplus_{j=1}^n[\pi]$. For a vector space $V$, we denote by $[\pi]^V$, the sheaf $[\pi]^{\dim V}$, with an implicitly fixed isomorphism between $k^{\dim V}$ and $V$.

\begin{lemma}[{cf.\,\cite[Lemma 7.1.5]{Curry2014}}]\label{lem:elementary_injective_sheaf}
    Indecomposable injective sheaves are injective.
\end{lemma}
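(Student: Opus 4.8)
The plan is to verify Definition~\ref{def:injective_sheaf_2} by hand. So let $\iota\colon A\hookrightarrow B$ be an injective natural transformation of sheaves on $\Pi$ and let $\eta\colon A\to[\pi]$ be an arbitrary natural transformation; I must produce $\beta\colon B\to[\pi]$ with $\beta\circ\iota=\eta$. The structural fact to exploit is that $[\pi]$ is concentrated on the down-set $D_\pi:=\{\sigma\in\Pi : \sigma\le\pi\}$ and that every structure map of $[\pi]$ inside $D_\pi$ is the identity on $k$. Consequently, for any natural transformation $\gamma\colon C\to[\pi]$ and any $\sigma\le\pi$ we have $\gamma(\sigma)=\gamma(\pi)\circ C(\sigma\le\pi)$ by naturality over the relation $\sigma\le\pi$; so such a $\gamma$ is determined by the single functional $\gamma(\pi)\colon C(\pi)\to k$, and, conversely, any functional $C(\pi)\to k$ propagated by this formula is automatically natural, thanks to the composition axiom $C(\tau\le\pi)\circ C(\sigma\le\tau)=C(\sigma\le\pi)$.

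With this in mind I build $\beta$ as follows. For $\tau\notin D_\pi$ set $\beta(\tau):=0$, which is forced because $[\pi](\tau)=0$. The linear map $\iota(\pi)\colon A(\pi)\hookrightarrow B(\pi)$ is injective and $k$ is an injective object of $\Vect$ (every short exact sequence of vector spaces splits), so the functional $\eta(\pi)\colon A(\pi)\to k$ extends along $\iota(\pi)$ to some $\beta(\pi)\colon B(\pi)\to k$. Finally, for each $\sigma\le\pi$ put $\beta(\sigma):=\beta(\pi)\circ B(\sigma\le\pi)$.

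It remains to check two routine points. Naturality of $\beta$: for a relation $\sigma\le\tau$ with $\tau\in D_\pi$ (hence $\sigma\in D_\pi$), the required identity $\beta(\sigma)=\beta(\tau)\circ B(\sigma\le\tau)$ is exactly the composition axiom for $B$ applied through $\pi$, since $[\pi](\sigma\le\tau)=\id$; for $\tau\notin D_\pi$ both sides of the naturality square vanish, as $\beta(\tau)=0$ and $[\pi](\sigma\le\tau)=0$. Compatibility $\beta\circ\iota=\eta$: at $\pi$ it holds by construction, and at $\sigma\le\pi$ one chases the naturality squares of $\iota$ and of $\eta$ over $\sigma\le\pi$ — again using $[\pi](\sigma\le\pi)=\id$ — to obtain $\beta(\sigma)\circ\iota(\sigma)=\beta(\pi)\circ\iota(\pi)\circ A(\sigma\le\pi)=\eta(\pi)\circ A(\sigma\le\pi)=\eta(\sigma)$; for $\tau\notin D_\pi$ both sides are zero.

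I do not anticipate a real obstacle: the only thing one must get right is to resist extending $\eta$ stalk by stalk (which fails, precisely for the reasons displayed in Examples~\ref{ex:non-injective_sheaf} and~\ref{ex:non-injective_sheaf_2}) and instead extend only the top functional $\beta(\pi)$ and let the poset structure propagate it consistently. As a sanity check one could also note the one-line categorical argument — $[\pi]$ is the value at $k$ of the right adjoint of the exact functor ``evaluate at $\pi$'', which therefore sends the injective object $k\in\Vect$ to an injective sheaf — but the elementary argument above is in keeping with the paper's stated goal of minimizing categorical prerequisites.
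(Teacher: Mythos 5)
Your proof is correct and is essentially the paper's own argument: extend the single functional at $\pi$ along the inclusion $A(\pi)\hookrightarrow B(\pi)$, then propagate it down the star of $\pi$ by composing with the structure maps of $B$, and verify naturality and the extension property exactly as you do. The only cosmetic difference is that you justify the extension at $\pi$ by invoking that $k$ is an injective object of $\Vect$, whereas the paper explicitly picks a retraction $g$ of $f(\pi)$ and sets $\beta(\pi)=\alpha(\pi)\circ g$; these are the same fact and yield the same $\beta$.
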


\begin{lemma}[{cf.\,\cite[Lemma 1.3.1]{Shepard1985}}]
\label{lem:coker-injective}
    A direct sum of injective sheaves is injective. Additionally, if $I\xhookrightarrow{\alpha} J$ is an injective natural transformation with $I,J$ injective sheaves, then $J\cong I\oplus \coker\alpha  $, and $\coker\alpha$ is an injective sheaf. 
\end{lemma}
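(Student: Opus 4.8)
The plan is to prove the three assertions — closure under direct sums, the splitting $J\cong I\oplus\coker\alpha$, and injectivity of $\coker\alpha$ — by reducing each to the defining extension property of Definition~\ref{def:injective_sheaf_2} combined with elementary linear algebra applied pointwise. Since every direct sum that remains in our category of finite-dimensional-valued sheaves on a finite poset is necessarily finite, it suffices to treat a finite family $I_1,\dots,I_n$ of injective sheaves and $I\coloneqq\bigoplus_{j=1}^{n}I_j$. Given an injective natural transformation $A\hookrightarrow B$ and a morphism $\phi\colon A\to I$, I would compose with the projections $p_j\colon I\to I_j$ to obtain $\phi_j\coloneqq p_j\circ\phi\colon A\to I_j$, extend each $\phi_j$ to some $\psi_j\colon B\to I_j$ using injectivity of $I_j$, and assemble $\psi\coloneqq(\psi_j)_{j}\colon B\to I$. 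Since $\psi$ restricts to $\phi$ in each coordinate, it restricts to $\phi$ on $A$, proving the first claim.

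For the splitting, since $I$ is injective I would apply Definition~\ref{def:injective_sheaf_2} to the injection $\alpha\colon I\hookrightarrow J$ and the morphism $\id_I\colon I\to I$ to obtain a retraction $r\colon J\to I$ with $r\circ\alpha=\id_I$. I would then argue pointwise: for each $\pi\in\Pi$ the linear map $\alpha(\pi)$ is a split injection of vector spaces with splitting $r(\pi)$, so $J(\pi)=\im\alpha(\pi)\oplus\ker r(\pi)$, using the decomposition $v=\alpha(\pi)r(\pi)v+\bigl(v-\alpha(\pi)r(\pi)v\bigr)$. Because kernels, images, quotients, and direct sums of sheaves are all formed pointwise (Definition~\ref{def:natural-transformation}), $\im\alpha$ and $\ker r$ are subsheaves of $J$, and these pointwise decompositions assemble to an identity $J=\im\alpha\oplus\ker r$ of sheaves, with the idempotent endomorphism $\alpha\circ r$ of $J$ furnishing the splitting natural transformations. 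Since $\alpha$ is injective it restricts to an isomorphism $I\cong\im\alpha$, and $\coker\alpha=J/\im\alpha\cong\ker r$; combining these gives $J\cong I\oplus\coker\alpha$.

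Finally, to see $\coker\alpha$ is injective it is enough to show that a direct summand of an injective sheaf is injective, applied to $\coker\alpha\cong\ker r\subseteq J$. I would use the idempotent $p\coloneqq\id_J-\alpha\circ r\colon J\to J$, which annihilates $\im\alpha$ and restricts to the identity on $\ker r$, hence is precisely the projection of $J$ onto the summand $\ker r$. Given an injection $A\hookrightarrow B$ and a morphism $f\colon A\to\ker r$, I would compose with the inclusion $\ker r\hookrightarrow J$, extend to $g\colon B\to J$ using injectivity of $J$, and check that $p\circ g\colon B\to\ker r$ restricts to $p\circ f=f$ on $A$. I expect the only point requiring genuine care — and the step I would write out most explicitly — is verifying that the pointwise splitting $J(\pi)=\im\alpha(\pi)\oplus\ker r(\pi)$ is compatible with all restriction maps $J(\sigma\le\tau)$, i.e.\ that it is a decomposition of sheaves rather than merely of the underlying graded vector spaces; this follows from the pointwise nature of the constructions involved but deserves a dedicated line. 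Everything else is routine diagram chasing.
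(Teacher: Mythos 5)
Your proof is correct and follows the paper's approach: projection--extend--assemble for direct sums, a retraction $r$ obtained by extending $\id_I$ along $\alpha$ to split $J$ (using injectivity of $I$, as you correctly invoke --- the paper's sketch writes ``injectivity of $J$'' at that point, which appears to be a slip), and extension through $J$ followed by projection for injectivity of the cokernel. Packaging the splitting via the idempotent $\alpha\circ r$ and the subsheaf $\ker r$ is an equivalent repackaging of the paper's isomorphism $J\to I\oplus\coker\alpha$ given by the pair of the retraction and the quotient map.
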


\begin{proposition}[{cf.\,\cite[Lemma 7.1.6]{Curry2014}, \cite[Theorem 1.3.2]{Shepard1985}}] 
\label{prop:decomposition_of_injective_sheaves}
  Each injective sheaf on a finite poset is isomorphic to a direct sum of indecomposable injective sheaves.
\end{proposition}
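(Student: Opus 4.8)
The plan is to induct on $|\Pi|$, stripping off one minimal element at a time. The base case $|\Pi|\le 1$ is immediate (a sheaf on a point is a single vector space $V=I(\pi)$ and $I=[\pi]^{V}$; the empty poset gives the empty sum). For the inductive step, fix a minimal element $\pi_0\in\Pi$, put $U:=\Pi\setminus\{\pi_0\}$ — an up-set, hence open, of size $|\Pi|-1$ — and write $F|_U$ for the evident restriction of a sheaf $F$ to $U$. The first point to check is that $F\mapsto F|_U$ preserves injectivity: given an injection $A\hookrightarrow B$ of sheaves on $U$ and a morphism $A\to I|_U$, extend $A,B$ by $0$ at $\pi_0$ (all structure maps and commutativity conditions involving $\pi_0$ are then forced and trivial, since $\pi_0$ is minimal), apply the injectivity of $I$ to extend $\bar A\to I$ to $\bar B\to I$, and restrict back to $U$. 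Thus $I|_U$ is injective on $U$, and by the induction hypothesis $I|_U\cong\bigoplus_{\pi\in U}[\pi]^{V_\pi}$ for some finite-dimensional $V_\pi$ (here $[\pi]$ is formed in $U$, but this coincides with the restriction to $U$ of $[\pi]$ formed in $\Pi$).

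Set $J:=\bigoplus_{\pi\in U}[\pi]^{V_\pi}$, now viewed as a sheaf on $\Pi$; it is injective by Lemmas \ref{lem:elementary_injective_sheaf} and \ref{lem:coker-injective}, and $J|_U\cong I|_U$. I will promote this isomorphism to a morphism $\theta\colon J\to I$. Extending the isomorphism $J|_U\xrightarrow{\sim}I|_U$ by $0$ at $\pi_0$ defines a morphism $J'\to I$ from the subsheaf $J'\subseteq J$ obtained by zeroing the value at $\pi_0$; since $J'\hookrightarrow J$ is an injection and $I$ is injective, this extends to some $\theta\colon J\to I$ whose restriction to $U$ is the original isomorphism.

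The crux is that any such $\theta$ is automatically injective as a morphism of sheaves. On $U$ this is clear. At $\pi_0$: since $J=\bigoplus_{\pi\in U}[\pi]^{V_\pi}$ and $[\pi](\pi_0)=k$ exactly when $\pi_0<\pi$, we have $J(\pi_0)=\bigoplus_{\pi\in U,\,\pi>\pi_0}V_\pi$; write $v=(v_\pi)_\pi\in J(\pi_0)$ and suppose $\theta(\pi_0)(v)=0$. For each $\pi>\pi_0$, naturality of $\theta$ gives $\theta(\pi)\bigl(J(\pi_0\le\pi)(v)\bigr)=I(\pi_0\le\pi)\bigl(\theta(\pi_0)(v)\bigr)=0$, and $\theta(\pi)$ is injective, so $J(\pi_0\le\pi)(v)=0$; but $[\pi](\pi_0\le\pi)=\id$, so the $V_\pi$-component of $J(\pi_0\le\pi)(v)$ is precisely $v_\pi$. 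Hence $v_\pi=0$ for every $\pi$, so $v=0$.

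Finally, $\theta\colon J\hookrightarrow I$ is an injection between injective sheaves, so Lemma \ref{lem:coker-injective} yields $I\cong J\oplus\coker\theta$ with $\coker\theta$ injective. Since $\theta|_U$ is an isomorphism, $\coker\theta$ vanishes on $U$; it is therefore concentrated at $\pi_0$, where its value is $W:=I(\pi_0)/\theta(\pi_0)(J(\pi_0))$ and all structure maps are zero (they target zero spaces). Thus $\coker\theta\cong[\pi_0]^{W}$, and $I\cong\bigl(\bigoplus_{\pi\in U}[\pi]^{V_\pi}\bigr)\oplus[\pi_0]^{W}$, a direct sum of indecomposable injective sheaves, completing the induction. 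I expect the main obstacle to be the third paragraph — recognizing that the extension $\theta$ is forced to be injective at $\pi_0$ by naturality alone, so that no clever choice of extension is required; the remaining ingredients, that restriction to $U$ preserves injectivity and that $\coker\theta$ has the stated form, are routine but worth a careful check.
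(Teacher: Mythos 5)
Your proof is correct, but it takes a genuinely different inductive route than the paper's. The paper fixes a linear extension $(\pi_1,\dots,\pi_n)$ of $\Pi$, inducts on $d$ with $\Pi_d=\{\pi_1,\dots,\pi_d\}$, and peels off the summand $[\pi_d]^{B}$ from the \emph{top} of the support: since $\pi_d$ is the linearly-largest element carrying $I$, every vector of $I(\pi_d)$ is maximal, the natural inclusion $F_{\pi_d}\hookrightarrow\bigoplus_{v\in B}[\pi_d]$ extends (by injectivity of $I$) to an injective map $\beta\colon\bigoplus_{v\in B}[\pi_d]\to I$, and Lemma~\ref{lem:coker-injective} splits it off, with the cokernel supported one step lower. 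You instead induct on $|\Pi|$ and strip a \emph{minimal} element $\pi_0$: you observe that restriction to the open set $U=\Pi\setminus\{\pi_0\}$ (i.e.\ $i_U^\ast$) preserves injectivity because its left adjoint, extension-by-zero $i_{U!}$, is exact and $\pi_0$ being minimal makes that extension trivial to write down; you then re-extend the resulting isomorphism on $U$ to a map $\theta\colon J\to I$ and show it is forced to be injective at $\pi_0$ by naturality alone, with cokernel $[\pi_0]^W$. Both arguments have the same backbone — exhibit an injection between injectives and invoke Lemma~\ref{lem:coker-injective} to split — but yours trades the paper's reliance on a chosen linear extension for an argument that is intrinsic to the poset once a minimal element is selected, and it makes explicit the useful sheaf-theoretic fact (restriction to open subsets preserves injectives) that is implicit in the paper's setup. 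Your observation that no ``clever choice'' of the lift $\theta$ is needed — any lift extending the isomorphism off $\pi_0$ is automatically injective — is the main twist and is sound; the rest is the same mechanism as the paper's, run in the opposite direction.
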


The central objects studied in this paper are (cochain) complexes of sheaves. Unless otherwise stated, all complexes we consider are bounded.

\begin{definition}\label{def:complex_of_sheaves}
A (bounded) \emph{complex of sheaves}, denoted $(A^\bullet, \mu^\bullet)$, is a sequence of sheaves $A^d$ and natural transformations $\mu^d$
\[
\cdots\rightarrow A^d\xrightarrow{\mu^d}A^{d+1}\xrightarrow{\mu^{d+1}}A^{d+2}\xrightarrow{\mu^{d+2}}\cdots
\]  
such that $\mu^{d+1}\circ\mu^d=0$ for each $d$, and $A^d=0$ for $|d|$ sufficiently large. A complex of sheaves is \emph{exact} if $\im\mu^d=\ker\mu^{d+1}$ for each $d$. A morphism $\alpha^\bullet:A^\bullet\rightarrow B^\bullet$ between complexes of sheaves is a collection of natural transformations $\alpha^d:A^d\rightarrow B^d$ such that the diagrams commute: 
\begin{center}
   \begin{tikzcd}
        A^d\ar{r}{\mu^d} \ar{d}{\alpha^d} & A^{d+1} \ar{d}{\alpha^{d+1}} \\
        B^d \ar{r}{\nu^d} & B^{d+1}
    \end{tikzcd}
\end{center}
A complex of sheaves is \emph{injective} if each sheaf in it is injective.
\end{definition}

We often abbreviate complex of sheaves just to \emph{complex}. Sometimes we abuse the notation and write $A^\bullet = (A^\bullet, \mu^\bullet)$ when the symbols for natural transformations are clear from context or not used.

The tools discussed in this paper are developed to study (co)homology. One version of it, defined for a complex, is the cohomology sheaf in degree $d$.

\begin{definition} \label{def:cohomology_sheaf}
  For a complex of sheaves $(A^\bullet,\mu^\bullet)$ on $\Pi$, define, for each $d\in\mathbb{Z}$, the \emph{cohomology sheaf} $H^d(A^\bullet)$ by
  \[
  H^d(A^\bullet)(\sigma) \coloneqq \ker\mu^d(\sigma)/\im\mu^{d-1}(\sigma).  
  \]
   Because $\mu^\bullet$ is a morphism of complexes, the linear map $A^d(\sigma\le\tau)$ maps $\ker\mu^d(\sigma)$ to $\ker\mu^d(\tau)$, and $\im\mu^{d-1}(\sigma)$ to $\im\mu^{d-1}(\tau)$. Therefore, $A^d(\sigma\le\tau)$ induce maps \[H^d(A^\bullet)(\sigma\le\tau): H^d(A^\bullet)(\sigma)\rightarrow H^d(A^\bullet)(\tau).\]
   These linear maps give $H^d(A^\bullet)$ the structure of a sheaf on $\Pi$.
\end{definition}

We finish the section by describing the structure of natural transformations between injective sheaves---this simple observation is crucial for the algorithmic treatment of injective complexes. There are two ingredients: a natural transformation between two direct sums uniquely decomposes to natural transformations between pairs of summands, and a natural transformation between powers of indecomposable injective sheaves is fully described by a single linear map, which can only be non-zero when the support of the codomain is contained in the support of the domain.

\begin{lemma}\label{lem:maps_between_injective_sheaves}
Let $\eta: I\rightarrow J$ be a natural transformation between two injective sheaves on a poset $\Pi$, and let
\[
I \cong \bigoplus_{\pi\in\Pi} [\pi]^{p_\pi}\hspace{15pt}\text{ and }\hspace{15pt}
J \cong \bigoplus_{\sigma\in\Pi} [\sigma]^{s_\sigma}
\]
be the decompositions of $I$ and $J$ into indecomposable injective sheaves with multiplicities $p_\pi$, $s_\sigma$. Then $\eta$ can be uniquely described by a collection of linear maps $f_{\pi\sigma}: k^{p_\pi}\rightarrow k^{s_\sigma}$, one for each pair $\pi\leq\sigma$ in $\Pi$. On the other hand, each such collection of linear maps defines a natural transformation. In other words,
\[\Hom(I,J) \ \ \cong\ \  \bigoplus_{\sigma\leq\pi} \Hom(k^{p_\pi},k^{s_\sigma})\cong \bigoplus_{\sigma\leq\pi}k^{p_\pi s_\sigma}, \]
where $\Hom(I,J)$ denotes the set of natural transformations from $I$ to $J$ and $\Hom(k^{p_\pi},k^{s_\sigma})$ denotes the set of linear transformations from $k^{p_\pi}$ to $k^{s_\sigma}$. 
\end{lemma}

\begin{proof}
Denote by $\Proj_A$ and $\Incl_A$ the projection onto and inclusion of a direct summand $A$, respectively. Then the map $\eta: I\rightarrow J$ decomposes into a sum of maps between the powers of indecomposable injective sheaves: \[
\eta = \sum_{\pi,\sigma\in\Pi}\eta_{\pi\sigma}, \hspace{10mm}
\eta_{\pi\sigma}=\Proj_{[\sigma]^{s_\sigma}}\circ\, \eta \circ \Incl_{[\pi]^{p_\pi}}.
\]
Consider $\tau\in\Pi$. If $\tau\not\leq\sigma$, then $\eta_{\pi\sigma}(\tau)=0$.
Otherwise, $
\eta_{\pi\sigma}(\tau)
= [\sigma]^{s_\sigma}(\tau\leq\sigma) \circ \eta_{\pi\sigma}(\tau)
= \eta_{\pi\sigma}(\sigma) \circ [\pi]^{p_\pi}(\tau\leq\sigma)$, which is equal to $\eta_{\pi\sigma}(\sigma)$ if $\sigma\leq\pi$, and $0$ otherwise. This shows that the natural transformation $\eta_{\pi\sigma}$ is determined by the single linear map $f_{\pi\sigma}\coloneqq\eta_{\pi\sigma}(\sigma)$. Moreover, this map is necessarily $0$ whenever $\sigma\not\leq \pi$, and it can be any linear map otherwise.
\end{proof}

\begin{example}\label{ex:natural_transformation_decomposition}
    Consider the three element ``meet'' poset, $\Pi$, given by two relations---$\sigma < \pi$ and $\tau < \pi$---and two injective sheaves on $\Pi$: $I \cong [\pi]^2 \oplus [\sigma]$ and $J \cong [\pi] \oplus [\sigma] \oplus [\tau]^2$. Then any natural transformation $\eta:I\rightarrow J$ is described by four linear maps: $f_{\pi\pi}: k^2\rightarrow k$, $f_{\pi\sigma}: k^2\rightarrow k$, $f_{\pi\tau}: k^2\rightarrow k^2$, and $f_{\sigma\sigma}: k\rightarrow k$. Note that we do not consider, e.g., $f_{\sigma\pi}$, because $\pi\not\leq \sigma$, and so the only natural transformation $[\sigma] \rightarrow [\pi]$ is $0$.
\end{example}

\section{Fixing a Basis of a Complex of Injective Sheaves}\label{sec:labeled_matrices}

The central concept for our algorithmic treatment of injective complexes is a \emph{labeled matrix} representing a natural transformation between injective sheaves. The same way we can represent a linear map by a matrix once we fix bases of the vector spaces, we can represent natural transformations between injective sheaves by matrices, once we fix specific bases. By definition, a natural transformation between sheaves, $\eta: I\rightarrow J$, is described by many matrices: one $\eta(\pi)$ for each $\pi\in\Pi$. By Lemma~\ref{lem:maps_between_injective_sheaves}, much less information is necessary in case of injective sheaves. In this section we organize this information in a single matrix and show that multiplying the defined matrices corresponds to composition of natural transformations. First, we need to make sure to fix well behaved systems of bases for a given injective sheaf.

\begin{definition}\label{def:compatible_basis}
    Let $I = \bigoplus_{i=1}^m [\pi_i]$ be a fixed decomposition of an injective sheaf on a poset $\Pi$, with possible repetitions of $\pi_i$. Let $U_{\pi}$ be a basis of the vector space $I(\pi)$ for each $\pi\in\Pi$. We call the system of bases $(U_{\pi})_{\pi\in\Pi}$ a \emph{decomposition-compatible basis} of $I$, if each map $I(\sigma \leq \pi)$ with respect to bases $U_{\sigma}$, $U_{\pi}$ is a projection to a subset of coordinates, respecting the order of the decomposition. As all systems of bases we consider are decomposition-compatible, we also say just \emph{basis} of $I$.
\end{definition}

\begin{example}
    If $\sigma < \pi$ and $I = [\pi] \oplus [\sigma] \oplus [\sigma]$, then the matrix representing $I(\sigma \leq \pi)$ with respect to any decomposition-compatible basis is $(1\  0\  0)$. Note that this still leaves some freedom. For example, if we multiply the basis vector for the ``$[\pi]$-coordinate'' by the same non-zero constant at each $\tau\leq\pi$, we get a formally different system of bases, but the matrices stay the same.
\end{example}

\subsection{Poset Labeled Matrices}\label{sec:poset_labeled_matrices}

We describe how to represent a natural transformation between two injective sheaves with fixed decomposition-compatible bases as a single matrix with labeled columns and rows. The idea is to simply record the information about the individual maps $f_{ij}$ from Lemma~\ref{lem:maps_between_injective_sheaves} in a single matrix. Recall that such a map $f_{\pi\sigma}: [\pi]^{p_\pi}\rightarrow [\sigma]^{s_\sigma}$ can be non-zero only if $\sigma\leq \pi$.

\begin{definition}\label{def:labeled-matrix}
    For a poset $\Pi$, a \emph{$\Pi$-labeled matrix} is a matrix $M$ together with a labeling of each column and row by elements of $\Pi$, such that the entry on an intersection of a column labeled by $\pi$ and a row labeled by $\sigma$ is non-zero only if $\sigma\leq\pi$.
    
    Columns and rows are labeled independently, the labels can be repeated, and not all labels need to be used. If $\Pi$ is clear from the context, we also say just \emph{labeled matrix}.

    For $\pi,\sigma\in\Pi$, we denote by $M[\sigma,\pi]$ the submatrix of $M$ obtained by deleting all rows not labeled by $\sigma$ and all columns not labeled by $\pi$. The same way for $S,P\subseteq\Pi$ we denote by $M[S,P]$ the submatrix obtained by deleting rows and columns labeled by simplices not in $S$ and $P$, respectively.

    We also allow a labeled matrix that has no rows---corresponding to a morphism into a trivial sheaf---or no columns---corresponding to a morphism from a trivial sheaf. Both are special cases of zero matrices.
\end{definition}

Note that the difference between a $0\times 0$ and a $0\times n$ matrix is more than just a formality---their kernels are $0$- and $n$-dimensional, respectively. This will be important when we discuss exactness later.

\begin{example}\label{ex:labeled_matrix}
    Let $\Pi$ be a three element poset with $\sigma, \tau < \pi$ as in Example~\ref{ex:natural_transformation_decomposition}. Then
    \begin{center}
    \includegraphics[width=40mm]{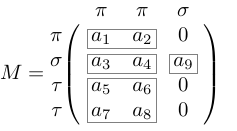}
    \end{center}
    is a $\Pi$-labeled matrix. Each of the outlined blocks is a matrix representing one of the four linear maps from Example~\ref{ex:natural_transformation_decomposition}, e.g., $M[\sigma, \pi] = (a_3 \ a_4)$ describes $f_{\pi\sigma}$. Note that the zeros are all forced, as non-zero values are only allowed for entries with `row-label $\leq$ column-label`.
\end{example}

\begin{example}
    The labeled matrices are, in a way, ``lower-triangular'': if $\Pi$ is linearly ordered, $\pi_n > \dots > \pi_1$, then a labeled matrix with both columns and rows labels exactly $(\pi_n, \pi_{n-1}, \dots, \pi_1)$ is a lower-triangular matrix.
\end{example}

\begin{example}
    The elements of an \emph{incidence algebra} of a poset $\Pi$ over a field $k$ are special cases of labeled matrices. By definition, such an element is a function, $f$, giving each interval $[\sigma, \pi]$ in $\Pi$ a value in $k$. This is equivalently described by a labeled matrix, $M$, with both columns and rows labeled by each element of $\Pi$ once---the value $f([\sigma, \pi])$ is then the single entry of $M[\sigma, \pi]$. The operations on the incidence algebra agree with the operations we define on labeled matrices. This relation shows that the incidence algebra with field coefficients can be viewed as the space of endomorpishms of the injective sheaf $\bigoplus_{\pi\in\Pi}[\pi]$.
\end{example}

See also Example~\ref{ex:submatrices-of-labeled-matrix} below. For even more examples see matrices in Section~\ref{sec:examples} labeled by simplices ordered by inclusion, e.g., $13<123$, with entries in the field $k=\Z_2$.

\begin{definition}
    A $\Pi$-labeled matrix $M$ \emph{represents} a natural transformation $\eta:I\rightarrow J$ between two injective sheaves if the following conditions hold:
    \begin{itemize}
        \item $I \cong \bigoplus_{i=1}^m [\pi_i]$, where $\pi_1,\dots,\pi_m$ are the labels of columns of $M$ (in that order),
        \item $J \cong \bigoplus_{j=1}^n [\sigma_j]$, where $\sigma_1,\dots,\sigma_n$ are the labels of rows of $M$ (in that order),
        \item For every $\sigma \leq \pi$ from $\Pi$, the matrix $M[\sigma,\pi]$ represents the map $f_{\sigma\pi}=\eta_{\sigma\pi}(\sigma)$ described in Lemma~\ref{lem:maps_between_injective_sheaves} with respect to fixed decomposition-compatible bases of $I$ and $J$.
    \end{itemize}
\end{definition}

Once a decomposition-compatible basis is chosen for $I$ and $J$, there is a one-to-one correspondence between natural transformations $\eta: I\rightarrow J$, and labeled matrices with the labeling from the decompositions of $I$ and $J$. This follows from Lemma~\ref{lem:maps_between_injective_sheaves}.

The information can be condensed in this way, because the matrices $\eta(\tau)$ overlap. If $M$ represents $\eta$, then the matrix representing a linear map $\eta(\tau)$ is a submatrix of $M$. Recall that $\St\tau$ is the set of elements greater or equal to $\tau$.

\begin{lemma}
    If a labeled matrix $M$ represents a natural transformation $\eta$, and $\tau\in\Pi$, the submatrix $M[\St\tau,\St\tau]$ is the matrix of the linear map $\eta(\tau)$ with respect to the fixed decomposition-compatible bases.
\end{lemma}
\begin{proof}
    As in Lemma~\ref{lem:maps_between_injective_sheaves}, we have $\eta = \bigoplus_{\sigma\leq\pi} \eta_{\pi\sigma}$. If $p_\pi$ is the multiplicity of $[\pi]$ in the decomposition of $I$, then \[
        \eta(\tau) = \bigoplus_{\sigma\leq\pi} \eta_{\pi\sigma}(\tau) = \bigoplus_{\tau\leq\sigma\leq\pi} \eta_{\pi\sigma}(\sigma) \circ [\pi]^{p_{\pi}}(\tau \leq \sigma).
    \]
    By definition, $M[\sigma, \pi]$ is the matrix of $\eta_{\pi\sigma}(\sigma)$ with respect to fixed decomposition-compatible bases for $I$ and $J$. Moreover, restriction maps $[\pi]^{p_{\pi}}[\tau\leq\sigma]$ are projections to coordinates consistent with the decomposition. Therefore, $M[\St\tau, \St\tau]$ is the matrix of $\eta(\tau)$ with respect to those fixed bases.
\end{proof}

\begin{example}\label{ex:submatrix_of_labeled_matrix}
    The matrix $M$ from the previous Example~\ref{ex:labeled_matrix} represents some natural transformation $\eta: I\rightarrow J$. The linear map $\eta(\tau)$ with respect to the fixed bases of $I$ and $J$ is {
    \footnotesize \[
        M[\St\tau, \St\tau] = M[\{\tau, \pi\}, \{\tau, \pi\}] =
        \begin{pNiceArray}{cc}
            a_1 & a_2 \\
            a_5 & a_6 \\
            a_7 & a_8
        \end{pNiceArray}.
    \]}
\end{example}

\begin{example}\label{ex:submatrices-of-labeled-matrix}
    A labeled matrix $M$ representing a natural transformation $\eta$ on a poset $\Pi$, a face poset of a simplicial complex on vertices $\{1,2,3,4,5,6\}$ ordered by inclusion---for example $134<1234$. The meaning of $\eta$ is revealed later in Example~\ref{ex:4-simplex_with_extra_edges}. Here we only show how matrices of linear maps $\eta(\sigma)$ overlap in a labeled matrix $M$.
    \[
        \includegraphics[width=120mm]{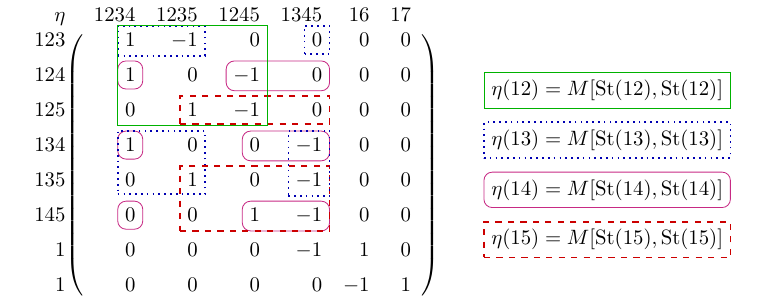}
    \]
\end{example}

Multiplying labeled matrices $N$ and $M$ makes sense whenever the column labels of $N$ exactly match the row labels of $M$. In this case, $N\!\cdot\! M$ is the product of the unlabeled matrices with columns labeled as columns of $M$, and rows labeled as row labels of $N$. the operation corresponds to composition of natural transformations.

\begin{lemma}\label{lem:multiplying_labeled_matrices}
    Let $I,J,K$ be injective sheaves, each with a fixed basis. If $M$ represents $\eta:I\rightarrow J$ and $N$ represents $\lambda: J\rightarrow K$ with respect to the fixed bases, then $N\!\cdot\! M$ represents $\lambda \circ \eta: I\rightarrow K$.
\end{lemma}
\begin{proof}
    Note first that $NM$ is a valid labeled matrix, i.e., it respects the non-zero elements condition: if $0 \neq NM[\sigma, \pi] = N[\sigma, \Pi]\!\cdot\! M[\Pi, \pi]$, then there is some $\tau$ such that $N[\sigma, \tau] \neq 0$ and $M[\tau, \pi] \neq 0$, which implies $\sigma \leq \tau \leq \pi$.

    For any $\tau\in\Pi$, note that $N[\St\tau, \Pi\setminus\St\tau] = 0$ by the non-zero elements condition for $N$. Then $(NM)[\St\tau,\St\tau] = N[\St\tau,\Pi]\cdot M[\Pi,\St\tau] = N[\St\tau,\St\tau]\cdot M[\St\tau,\St\tau]$, which implies that $N\!\cdot\!M$ represents $\lambda\circ\eta$ with respect to the fixed bases on $I$ and $K$.
\end{proof}

\begin{example}
    Consider $M$ from Example~\ref{ex:labeled_matrix}, and $N$ a labeled matrix representing a natural transformation $\lambda: J\rightarrow K\cong [\pi] \oplus [\sigma]$. The multiplication $N\!\cdot\! M$ is as follows:
    \begin{center}
        \includegraphics[width=.85\textwidth]{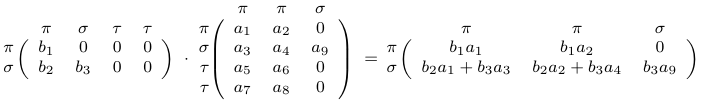}
    \end{center}
\end{example}

\subsection{Change of Bases for Labeled Matrix Representations}\label{sec:change_of_basis_for_labeled_matrices}

Just as for matrices between vector spaces, changing the basis of domain or codomain of a natural transformation $\eta: I\rightarrow J$ corresponds to multiplication of the representing labeled matrix, $M$, by another labeled matrix from the right or the left side, respectively. Consider $U$, $U'$ two different bases of $I$. Both can be decomposition-compatible with the same decomposition or with different decompositions---that is, the order of summands can differ. Either way, we can represent the identity $id: I\rightarrow I$ by a labeled matrix, $R$, with respect to those two bases $U$ and $U'$. Then $R$ is a change of basis labeled matrix. That is, if $M$ and $M'$ are labeled matrix representations of $\eta$ with respect to $U$ and $U'$, respectively, then $M' \!\cdot\! R = M$. Analogously, if we change bases in the codomain, $J$, it corresponds to matrix multiplication from the left side.

The above observation implies that choosing a different basis for the domain or codomain corresponds to column or row operations in the labeled matrix representations, respectively. Since $R$ is itself a labeled matrix, $R[\sigma,\pi]$ is non-zero only if $\sigma\leq\pi$. This restricts the set of operations that are allowed.

\begin{definition}\label{def:allowed-operations}
    For a $\Pi$-labeled matrix, the \emph{allowed elementary column operations} are
    \begin{itemize}
        \item adding a multiple of a column labeled by $\sigma$ to a column labeled by $\pi$ for $\sigma\leq\pi$,
        \item multiplying a column by a non-zero constant,
        \item swapping two columns,
    \end{itemize}
    and the \emph{allowed elementary row operations} are
    \begin{itemize}
        \item adding a multiple of a row labeled by $\pi$ to a row labeled by $\sigma$ for $\sigma\leq\pi$,
        \item multiplying a row by a non-zero constant,
        \item swapping two rows.
    \end{itemize}
    An allowed elementary row (column) operation corresponds to multiplication from the left (right) side by an \emph{elementary labeled matrix}, which is a special case of an elementary matrix in the standard sense.
\end{definition}

\begin{example}\label{ex:change_of_basis}
    Let $\Pi$ be a poset with $\sigma < \pi$. Consider $[\pi] \overset{\eta^0}{\rightarrow} [\pi]\oplus [\sigma] \overset{\eta^1}{\rightarrow} [\sigma]$ with $\eta^i$ represented by $M^i$. Change the basis for the middle sheaf so that the identity is represented by $R$. Then the new matrices $\hat{M}^0 = R\cdot M^0$, $\hat{M}^1 = M^1\cdot R^{-1}$ represent $\eta^0$, $\eta^1$ with respect to the new basis, respectively. We performed one allowed  elementary operation on each $M^i$.
    
    \noindent\begin{minipage}{\textwidth}
        \centering
        \includegraphics[width=160mm]{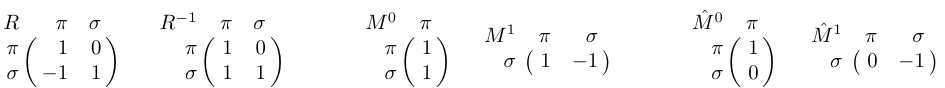}
    \end{minipage}
\end{example}

Any change of basis matrix, $R$, is a composition of allowed elementary row operations or column operations. Indeed, we can reduce any such $R$ to the identity matrix using allowed row or column operations. With either option, we first reduce the square ``diagonal'' blocks $R[\pi,\pi]$ to identity matrices---this is always possible since we impose no restrictions on operations on those blocks, and $R[\pi, \pi]$ represents an isomorphism of vector spaces. Then we can clear the rest of the matrix, since $R[\sigma,\pi]$ can be non-zero only if $\sigma\leq\pi$, in which case we can clear this block by adding rows labeled by $\pi$ or by adding columns labeled by $\sigma$. In particular, this also implies that the change of bases labeled matrices are invertible, and the inverse is, again, a labeled matrix. 

\subsection{Complexes of Labeled Matrices}

One labeled matrix represents a single natural transformation between injective sheaves. A series of labeled matrices represents an injective complex.

\begin{definition}
    A sequence $M^\bullet=(\dots, M^d, M^{d+1}, \dots)$ of labeled matrices is a \emph{complex of labeled matrices} if 
    \begin{itemize}
        \item columns of $M^{d+1}$ have the same labeling as the rows of $M^d$ (including order) for all $d$,
        \item $M^{d+1}\cdot M^d=0$ for all $d$.
    \end{itemize}
    The sequences we consider are finite, but formally extend indefinitely by empty matrices. Recall that an empty labeled matrix can be a $0\times 0$ matrix, but also an $n\times 0$ matrix, representing a natural transformation from a trivial to a non-trivial sheaf, or a $0\times n$ matrix.
\end{definition}

Note that $M^{d+1}\cdot M^d=0$ iff $M^{d+1}[\St\tau, \St\tau] \cdot M^d[\St\tau, \St\tau]=0$ for all $\tau\in\Pi$. This is because any row in either matrix is labeled by some $\tau$, and is non-zero only at columns labeled by $\St\tau$.

\begin{example}\label{ex:complex_of_labeled_matrices}
    A complex of labeled matrices on $\Dnk{3}{2}$, the face poset of the 2-skeleton of a tetrahedron $1234$ (for example $14\leq 124$, as $\{1,4\}\subseteq\{1, 2, 4\}$):

    \begin{center}
        \includegraphics[trim={6mm 0 6mm 0}, clip, width=\textwidth]{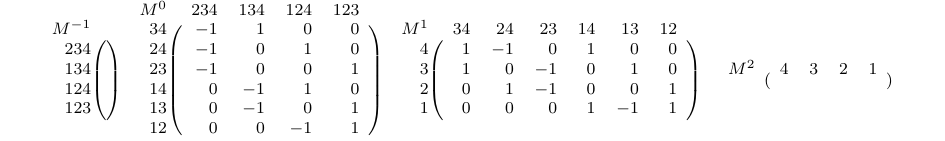}
    \end{center}
\end{example}

\begin{definition}\label{def:complex_of_labeled_matrices}
    An injective complex $(I^\bullet, \eta^\bullet)$ is \emph{represented} by a complex of labeled matrices $M^\bullet$, if the following hold:
    
    \begin{itemize}
        \item $I^d$ is isomorphic to the sum of indecomposables given by the labeling of the columns of $\eta^d$ for every~$d$,
        \item $M^{d-1}$ and $M^{d}$ are representations of the natural transformations $\eta^{d-1}$ and $\eta^{d}$ with respect to the same decomposition-compatible basis of $I^d$, for every~$d$.
    \end{itemize}
\end{definition}
\noindent Note that if $0$ is the smallest degree with a non-zero entry, $I^0$, then we formally require $M^\bullet$ to start with a $n \times 0$ matrix $M^{-1}$ as shown in Example~\ref{ex:complex_of_labeled_matrices}. In most other examples we do not show the empty matrices at the start or the end---their labeling is determined by the columns of the first or rows of the last non-empty matrix, respectively.

\begin{lemma}\label{lem:change_of_basis_for_complex}
Changing the basis of $I^d$ in a complex corresponds to performing column operations on $M^d$, and at the same time corresponding inverse row operations on $M^{d-1}$. If $R$ is the labeled matrix representing the change of basis of $I^d$, then the new matrices with codomain and domain $I^d$ are $R\cdot M^{d-1}$ and $M^{d}\cdot R^{-1}$, respectively. For example, if we add the $i$-th column to the $j$-th in $M^d$, we need to subtract the $j$-th row from the $i$-th in $M^{d-1}$.
\end{lemma}

\begin{remark}
    To simplify notation, in the rest of the paper, we use the same symbol both for a natural transformation and for a labeled matrix representation of it. We always assume that a particular decomposition-compatible basis of each injective sheaf is fixed. When we say ``$(I^\bullet,\eta^\bullet)$ is a complex represented by labeled matrices'', we mean that matrices denoted by $\eta^d$ form a complex of labeled matrices that represents the complex of injective sheaves $(I^\bullet, \eta^\bullet)$.
\end{remark}

\section{Derived Categories}\label{sec:derived_categories}

We often use different notions and degrees of homology separately when studying geometrical objects, losing important relationships between the different choices. Derived categories are a tool to take a step back while still abstracting to ``just homological information''. The formal definition is technically involved, but it can be understood in simpler terms for our setting of finite dimensional bounded cochain complexes over finite posets---in this section we define essentially the skeleton of the derived category. The objects are minimal injective complexes, and the morphisms are a quotient of morphisms of complexes.

We recall definitions necessary to describe the quotient of morphisms in \ref{sec:derived_categories:morphisms}. In \ref{sec:derived_categories:minimal_complexes} we show why we can restrict our attention to minimal injective complexes and give the definition of (a skeleton of) the derived category. In \ref{sec:derived_categories:algorithm_minimize} we give an explicit algorithmic way to go from any injective complex to the minimal one. Finally, in \ref{sec:derived_categories:injective_resolutions} we recall how to think of a single sheaf as a member in the derived category of minimal injective complexes, and describe a concrete algorithm to compute the minimal injective resolution of a sheaf. See also \ref{app:basis_for_morphisms} for explicit description of the vector space of morphisms in the derived category with a system of linear equations.

\subsection{Morphisms, Null-homotopy, Quasi-isomorphism} \label{sec:derived_categories:morphisms}

\begin{definition}\label{def:null_homotopic}
A morphism $\alpha^\bullet:F^\bullet\rightarrow G^\bullet$ between two complexes, $(F^\bullet,\eta^\bullet)$ and $(G^\bullet,\delta^\bullet)$, is \emph{null-homotopic} if there exists morphisms of sheaves $h^d:F^d\rightarrow G^{d-1}$:
    \begin{center}
    \includegraphics[width=150mm]{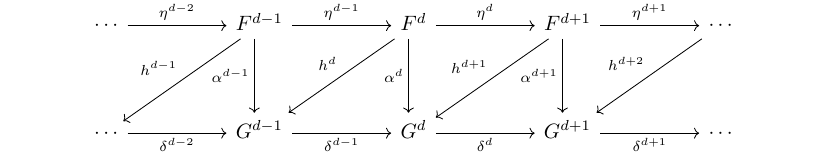}
\end{center}
 such that $\alpha^d=\delta^{d-1}h^d+h^{d+1}\eta^d$, for each $d$. 
\end{definition}

To get a better intuition about null-homotopic maps, we state an alternate definition. We need to first introduce the mapping cone of a morphism of complexes, which will also be useful in later sections.

\begin{definition}\label{def:mapping_cone_comlex}
    The \emph{mapping cone}, of a morphism $\alpha^\bullet:F^\bullet\rightarrow G^\bullet$ between two complexes, $(F^\bullet,\eta^\bullet)$ and $(G^\bullet,\delta^\bullet)$, is a complex $C(\alpha^\bullet) = (C^\bullet, \gamma^\bullet)$ given by \[
    C^d \coloneqq F^{d+1}\oplus G^{d},\hspace{20mm} \gamma^d = \begin{pmatrix} -\eta^{d+1} & 0 \\ \alpha^{d+1} & \delta^d \end{pmatrix}.
    \]
    
    \begin{minipage}{.95\textwidth}
    \centering
    \vspace{3mm}
    \includegraphics[width=150mm]{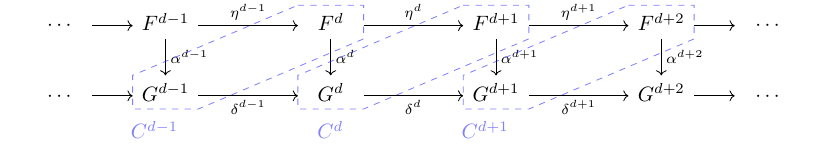}
    \vspace{3mm}
    \end{minipage}
\end{definition}
The following is a useful fact which we need in later sections.
\begin{lemma}[{\cite[Def.\ 1.5.7 and the following discussion]{KashiwaraSchapira1994}}]\label{lem:mapping_cone_of_quasi-isomorphism}
A morphism $\alpha^\bullet:F^\bullet\rightarrow G^\bullet$ between two complexes is a quasi-isomorphism if and only if its mapping cone, $C(\alpha^\bullet)$, is exact.
\end{lemma}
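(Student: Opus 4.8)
The plan is to run the standard ``cone'' long exact sequence. Write $F^\bullet[1]$ for the shifted complex with $(F^\bullet[1])^d := F^{d+1}$ and differential $-\eta^{d+1}$. From the block description of $\gamma^\bullet$ in Definition~\ref{def:mapping_cone_comlex}, the inclusion $\iota^d:G^d\hookrightarrow F^{d+1}\oplus G^d = C^d$ into the second summand and the projection $p^d:C^d\twoheadrightarrow F^{d+1}=(F^\bullet[1])^d$ onto the first summand are morphisms of complexes: the lower-triangular shape of $\gamma^\bullet$ gives exactly $\gamma^d\circ\iota^d=\iota^{d+1}\circ\delta^d$ and $p^{d+1}\circ\gamma^d=-\eta^{d+1}\circ p^d$. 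At each $\sigma\in\Pi$ these fit into the split short exact sequence of vector spaces $0\to G^d(\sigma)\to C^d(\sigma)\to F^{d+1}(\sigma)\to 0$, so we obtain a short exact sequence of complexes of sheaves
\[
0 \longrightarrow G^\bullet \xrightarrow{\ \iota^\bullet\ } C(\alpha^\bullet) \xrightarrow{\ p^\bullet\ } F^\bullet[1] \longrightarrow 0 .
\]

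Next I would apply the long exact sequence theorem (already invoked earlier in the excerpt) to this short exact sequence, yielding the long exact sequence of cohomology sheaves
\[
\cdots \to H^d(G^\bullet) \to H^d(C(\alpha^\bullet)) \to H^d(F^\bullet[1]) \xrightarrow{\ \partial^d\ } H^{d+1}(G^\bullet) \to \cdots ,
\]
and then identify the connecting homomorphism. Since $H^d(F^\bullet[1])=H^{d+1}(F^\bullet)$, the claim is $\partial^d = H^{d+1}(\alpha^\bullet)$. This is a snake-lemma chase, which can be performed pointwise: a class in $H^{d+1}(F^\bullet)(\sigma)$ is represented by $x\in\ker\eta^{d+1}(\sigma)$; lifting along $p^d(\sigma)$ to $(x,0)\in C^d(\sigma)$ and applying $\gamma^d(\sigma)$ gives $(-\eta^{d+1}(\sigma)x,\ \alpha^{d+1}(\sigma)x)=(0,\ \alpha^{d+1}(\sigma)x)$, which lies in the image of $\iota^{d+1}(\sigma)$ and represents $\alpha^{d+1}(\sigma)x$; hence $\partial^d[x]=[\alpha^{d+1}(\sigma)x]=H^{d+1}(\alpha^\bullet)(\sigma)[x]$.

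Finally I would read off the equivalence from the long exact sequence. By the definition of a cohomology sheaf and of exactness, $C(\alpha^\bullet)$ is exact if and only if $H^d(C(\alpha^\bullet))=0$ for all $d$. If $\alpha^\bullet$ is a quasi-isomorphism, every $\partial^d=H^{d+1}(\alpha^\bullet)$ is an isomorphism of sheaves, and exactness of the long sequence forces $H^d(C(\alpha^\bullet))=0$ for all $d$. Conversely, if $H^d(C(\alpha^\bullet))=0$ for all $d$, then for each $d$ the segment $0=H^d(C(\alpha^\bullet))\to H^d(F^\bullet[1])\xrightarrow{\partial^d}H^{d+1}(G^\bullet)\to H^{d+1}(C(\alpha^\bullet))=0$ shows $\partial^d=H^{d+1}(\alpha^\bullet)$ is an isomorphism, so $\alpha^\bullet$ is a quasi-isomorphism.

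\textbf{Main obstacle.} Essentially all of the content is the identification $\partial^d=H^{d+1}(\alpha^\bullet)$, i.e.\ the diagram chase together with keeping track of the sign introduced by the shift in the top-left block of $\gamma^\bullet$; everything else is formal bookkeeping with direct sums. (One could alternatively cite this verbatim from Kashiwara--Schapira, as the statement indicates, but the argument above is short and self-contained in the present finite-poset setting, where cohomology and exactness are checked pointwise.)
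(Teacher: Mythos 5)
The paper gives no proof of this lemma; it is cited directly from Kashiwara--Schapira (Def.\ 1.5.7 and the following discussion), so there is no proof of the paper's to compare against. Your argument is correct and is precisely the standard one the reference has in mind: the lower-triangular block form of $\gamma^\bullet$ gives the degreewise-split short exact sequence $0\to G^\bullet\to C(\alpha^\bullet)\to F^\bullet[1]\to 0$, the snake lemma (valid here because sheaves on a finite poset form an abelian category with exactness checked pointwise, a fact used freely throughout the paper) yields the long exact cohomology sequence, and the identification $\partial^d=H^{d+1}(\alpha^\bullet)$ — which you verify by an explicit element chase, correctly accounting for the sign $-\eta^{d+1}$ in the shift — makes the equivalence immediate by reading the sequence two ways. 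Both your sign computation and your use of the two-out-of-three/sandwich argument in the final step check out, so this is a complete and self-contained replacement for the citation.
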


The mapping cone construction provides a condition equivalent to Definition~\ref{def:null_homotopic}. The proof is straightforward if we write $\tilde{\alpha}^\bullet$ in the statement as $(\alpha^\bullet, h^\bullet)$.
\begin{proposition}\label{prop:explicit_null_homotopic}
A morphism $\alpha^\bullet:F^\bullet\rightarrow G^\bullet$ between two complexes, $(F^\bullet,\eta^\bullet)$ and $(G^\bullet,\delta^\bullet)$,
is null-homotopic if and only if $\alpha^\bullet$ factors through the mapping cone of the identity map on the complex $(G^{\bullet-1},-\delta^{\bullet-1})$ (with indices shifted by $-1$):
\begin{center}
\begin{tikzcd}
    F^\bullet \ar[swap]{r}{\tilde{\alpha}^\bullet} \ar[bend left=30]{rr}{\alpha^\bullet} &
    C\left(\id_{(G^{\bullet-1},-\delta^{\bullet-1})}\right) \ar[swap]{r}{\mathrm{proj}} &
    G^\bullet
\end{tikzcd}
\end{center}

\end{proposition}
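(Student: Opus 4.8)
The plan is to unwind the definition of the mapping cone of an identity map and match it against Definition~\ref{def:null_homotopic}. First I would write down explicitly the complex $C\big(\id_{(G^{\bullet-1},-\delta^{\bullet-1})}\big)$. Applying Definition~\ref{def:mapping_cone_comlex} to the identity morphism on the shifted complex $(G^{\bullet-1},-\delta^{\bullet-1})$, one gets in degree $d$ the sheaf $G^{d}\oplus G^{d-1}$, with differential
\[
\begin{pmatrix} \delta^{d} & 0 \\ \id & -\delta^{d-1} \end{pmatrix},
\]
and the projection map $\mathrm{proj}$ in the statement is projection onto the first summand $G^{d}$. (I would double-check the signs coming from the degree shift: the shift sends differential $\delta^{\bullet}$ to $-\delta^{\bullet-1}$ in degree $d$, and the mapping cone formula introduces another minus on the upper-left block, so the net upper-left entry is $\delta^{d}$, as one expects since post-composing with $\mathrm{proj}$ must give a morphism of complexes into $(G^\bullet,\delta^\bullet)$.)

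Next I would observe that a morphism of complexes $\tilde\alpha^\bullet:F^\bullet\to C(\id)$ is, in each degree $d$, a pair of sheaf morphisms $\tilde\alpha^d = (a^d, h^d)$ with $a^d:F^d\to G^d$ and $h^d:F^d\to G^{d-1}$, and that the compatibility with differentials is exactly the system
\[
a^{d+1}\eta^d = \delta^d a^d, \qquad
h^{d+1}\eta^d = a^d - \delta^{d-1} h^d .
\]
The first equation says $a^\bullet$ is itself a morphism of complexes; the second rearranges to $a^d = \delta^{d-1}h^d + h^{d+1}\eta^d$. The condition that $\tilde\alpha^\bullet$ lifts $\alpha^\bullet$ through $\mathrm{proj}$ is precisely $a^d = \alpha^d$ for all $d$. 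Substituting, the existence of a lift $\tilde\alpha^\bullet$ is equivalent to the existence of sheaf morphisms $h^d:F^d\to G^{d-1}$ with $\alpha^d = \delta^{d-1}h^d + h^{d+1}\eta^d$ for each $d$ — which is exactly the defining condition of null-homotopy in Definition~\ref{def:null_homotopic}. (One should note that once $a^d=\alpha^d$ is imposed, the first equation $\alpha^{d+1}\eta^d=\delta^d\alpha^d$ is automatic, since $\alpha^\bullet$ was assumed to be a morphism of complexes; so no extra constraint arises.) This establishes both directions simultaneously: given a null-homotopy $\{h^d\}$ set $\tilde\alpha^d=(\alpha^d,h^d)$ and check it is a morphism of complexes factoring $\alpha^\bullet$; conversely, extract $\{h^d\}$ from the second component of any such $\tilde\alpha^\bullet$.

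The only real subtlety — the place I would be most careful — is the bookkeeping of signs and index shifts in the cone of the identity, since the paper's cone convention $\gamma^d=\begin{pmatrix}-\eta^{d+1}&0\\\alpha^{d+1}&\delta^d\end{pmatrix}$ combined with the shift $(G^{\bullet-1},-\delta^{\bullet-1})$ must reproduce precisely the signs in Definition~\ref{def:null_homotopic}. A minor point worth spelling out is that $\mathrm{proj}:C(\id_{(G^{\bullet-1},-\delta^{\bullet-1})})\to G^\bullet$ is genuinely a morphism of complexes (it commutes with the differentials because the upper-left block of the cone differential is $\delta^d$), so the factorization statement makes sense. Everything else is a direct matching of data, with no homological input needed beyond the definitions already recorded.
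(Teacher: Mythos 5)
Your proof is correct, and the sign bookkeeping is handled properly: the shift produces differential $-\delta^{d-1}$ in degree $d$, and the cone formula flips the upper-left block back to $+\delta^{d}$, so the lift condition in the second component reads $a^d = \delta^{d-1}h^d + h^{d+1}\eta^d$, which with $a^d=\alpha^d$ is precisely Definition~\ref{def:null_homotopic}. The paper in fact states this proposition without proof; your argument is the direct unwinding of definitions that the proposition invites, and it is complete.
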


Note that, by Lemma \ref{lem:mapping_cone_of_quasi-isomorphism}, the mapping cone of an identity map is an exact complex. Hence, the above proposition immediately implies that all null-homotopic maps induce trivial morphisms on cohomology sheaves. This gives some intuition for the following definition, which, on a high level, states that null-homotopic maps should be treated as zero maps for the purposes of homological algebra. Recall the notion of cohomology sheaf from Definition~\ref{def:cohomology_sheaf}.

\begin{definition}
  A morphism $\alpha^\bullet:F^\bullet\rightarrow G^\bullet$ of complexes of sheaves is a \emph{quasi-isomorphism} if the induced map 
  \[
  H^d(\alpha^\bullet): H^d(F^\bullet)\rightarrow H^d(G^\bullet)
  \] 
  between cohomology sheaves is an isomorphism for each $n\in\mathbb{Z}$. 
\end{definition}

\begin{definition}
    An \emph{injective resolution} of a complex of sheaves $F^\bullet$ is a quasi-isomorphism 
    \[
    \alpha^\bullet:F^\bullet\rightarrow I^\bullet
    \]
    from $F^\bullet$ to a complex $I^\bullet$ of injective sheaves. 
\end{definition}

Each complex of sheaves can be identified with a complex of injective sheaves up to quasi-isomorphism---see, e.g., \cite[Thm~1.3.4]{Shepard1985}. In the next section we show that it can be identified with a minimal injective complex, unique up to isomorphism of complexes, and in the following section we give an explicit algorithm for minimizing an injective complex.

\subsection{Minimal complexes of Injective Sheaves} \label{sec:derived_categories:minimal_complexes}

We will now define minimal complexes of injective sheaves, and show that every complex of injective sheaves has a minimal complex of injective sheaves quasi-isomorphic to it, and this minimal complex is unique up to isomorphism of complexes.

\begin{definition}\label{def:minimal_injective_complex}
    A complex of injective sheaves $I^\bullet$ is \emph{minimal} if it minimizes the number of indecomposables among all quasi-isomorphic complexes. That is, if for every complex of injective sheaves $J^\bullet$ which is quasi-isomorphic to $I^\bullet$ and for every $d$, the number of indecomposable summands of $I^d$ is less than or equal to the number of indecomposable summands of $J^d$.
\end{definition}

To formulate one of the equivalent conditions of minimality, we define maximal vectors. In the module perspective, they are exactly the elements of the socle.

\begin{definition}\label{def:maximal-vectors}
    A vector $s\in F(\pi)$ is \emph{maximal} if $F(\pi\le\tau)(s)=0$ for all $\tau > \pi$. We denote by $M_F(\pi)$ the subspace of maximal vectors in $F(\pi)$, i.e., \[M_F(\pi)\coloneqq\bigcap_{\pi< \sigma}\ker F(\pi\leq\sigma).\]
    Note that it is sufficient to take only the intersection of $\ker F(\pi\le\sigma)$ for all $\pi<_1\sigma$. 
\end{definition}

\begin{theorem} \label{thm:minimal_injective_complex}
Let $(I^\bullet, \eta^\bullet)$ be a complex\footnote{Recall that the complexes considered in this paper are bounded.} of injective sheaves on $\Pi$. The following are equivalent: 
\begin{enumerate}
    \item $I^\bullet$ is minimal.
    \item For any quasi-isomorphic complex $(J^\bullet, \lambda^\bullet)$ of injective sheaves, there exists a morphism of complexes $r^\bullet:J^\bullet\rightarrow I^\bullet $ such that $r^d$ is surjective for each $d$. 
 \item For any quasi-isomorphic complex $(J^\bullet, \lambda^\bullet)$ of injective sheaves, there exists a morphism of complexes $q^\bullet:I^\bullet\rightarrow J^\bullet $ such that $q^d$ is injective for each $d$. 
 \item \label{thm:minimal_injective_complex_maximal_vectors} For each $d$, $\pi\in\Pi$, and maximal vector $s\in I^d(\pi)$, $\eta^d(\pi)(s)=0$.
\end{enumerate}
Moreover, for a given complex of sheaves there exists a unique (up to isomorphism of complexes) injective resolution by a minimal complex of injective sheaves.
\end{theorem}
\begin{proof}

The implications $2\Rightarrow 1$ and $3\Rightarrow 1$ are immediate as both imply that $\dim I^d(\pi) \leq \dim J^d(\pi)$ for every complex $J^\bullet$ quasi-isomorphic to $I^\bullet$, and every $d\in\Z$, $\pi\in\Pi$.

We show implication $1\Rightarrow 4$, proof of which also yields an argument for existence of minimal injective complexes, and then we show $4\Rightarrow 3$ and $4\Rightarrow 2$, which will also yield the uniqueness.

\underline{$1\Rightarrow 4$}: We show that if $4$ does not hold, then $1$ does not hold either. If $s$ is a maximal vector in $I^d(\pi)$ which does not vanish when we apply $\eta^d$, we can define $\hat{I}^d:= I^d/[\pi]_s$, where $[\pi]_s$ is the indecomposable injective such that $s\in[\pi]_s(\pi)$, and similarly $\hat{I}^{d+1}:= I^{d+1}/[\pi]_{\eta^d(\pi)(s)}$. By Lemma \ref{lem:coker-injective}, both are injective sheaves. Denoting by $q^d$ the quotient maps and setting $r=\eta^d(\pi)(s)$, we get:

\begin{center}
    \includegraphics[width=.65\textwidth]{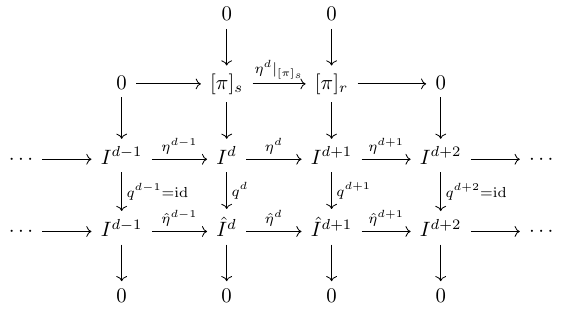}
\end{center}
Since the top row has trivial cohomology, and a short exact sequence of complexes induces a long exact sequence in cohomology, we see that \[0\rightarrow H^j(I^\bullet) \overset{q^j}{\rightarrow} H^j(\hat I^\bullet) \rightarrow 0\] is exact, that is, $q^\bullet$ induces an isomorphism on cohomology. We have found a complex $\hat I^\bullet$ quasi-isomorphic to $I^\bullet$ with fewer indecomposable summands in places $d$ and $d+1$, so $I^\bullet$ is not minimal.

By iterating this process, we can produce, from any bounded complex of injective sheaves $J^\bullet$, a minimal complex of injective sheaves $I^\bullet$, which proves the existence of minimal injective complexes.

\underline{$4\Rightarrow 2, 3$}: 
Since $I^\bullet$, $J^\bullet$ are quasi-isomorphic, there is a quasi-isomorphism $q^\bullet: I^\bullet\rightarrow J^\bullet$. Because both complexes consist of injective sheaves, $q^\bullet$ is a homotopy equivalence. Therefore, let $q^\bullet: I^\bullet \rightarrow J^\bullet$ and $r^\bullet: J^\bullet \rightarrow I^\bullet$ form a homotopy equivalence. We show that if $I^\bullet$ satisfies the condition~$4$, then $q^d$ is injective and $r^d$ is surjective for each $d$.

We inductively prove that $r^d \circ q^d$ is injective and that $r^d \circ q^d (\ker\eta^d) = \ker\eta^d$. As the base case, choose any $d$ such that $I^j=0$ for each $j\le d$, where the induction hypothesis holds trivially. We show the induction step.

By assumption, $r^d \circ q^d=\id + h^{d+1}\circ \eta^d + \eta^{d-1}\circ h^d$ for a collection of morphisms
\[
h^d:I^d\rightarrow I^{d-1}.\]
We begin by showing that the restriction of $r^d \circ q^d$ to $\ker\eta^d$ is injective. 
\begin{center}
    \includegraphics[width=.8\textwidth]{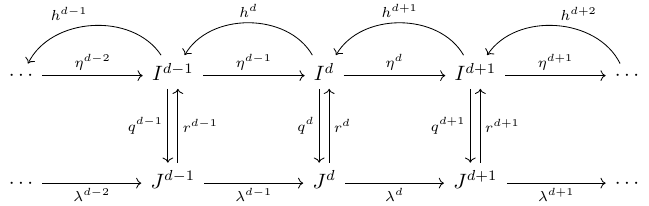}
\end{center}
Assume $\pi\in\Pi$, $x\in\ker\eta^d(\pi)$, and $x\in\ker (r^d\circ q^d)(\pi)$. Then 
\[0 = (r^d\circ q^d)(\pi) (x) = x + 0 + (\eta^{d-1} \circ h^d)(\pi) (x).\]
That is, $x=(\eta^{d-1}\circ h^d)(\pi)(-x)$. Set $y\coloneqq h^d(\pi)(-x)$. 
Because $q^\bullet$ is a morphism of complexes,  $\lambda^{d-1}\circ q^{d-1} = q^d\circ \eta^{d-1}$. Because $\eta^{d-1}(\pi)(y)=x$, we have that $(\lambda^{d-1}\circ q^{d-1})(\pi)(y) = q^d(\pi)(x)$. Because $(r^d\circ q^d)(\pi)(x)=0$, we have that
\[
(r^d\circ \lambda^{d-1}\circ q^{d-1})(\pi) (y)  =0.
\]
Because $r^\bullet$ is a morphism of complexes, $r^d\circ \lambda^{d-1}=  \eta^{d-1}\circ r^{d-1}$.  Therefore, 
\[
(\eta^{d-1}\circ r^{d-1}\circ q^{d-1})(\pi) (y)  =0.
\]
We have therefore shown that $ (r^{d-1}\circ q^{d-1})(\pi)(y)\in \ker \eta^{d-1}(\pi)$.

By the induction assumption, we have that \[(r^{d-1}\circ q^{d-1})(\ker\eta^{d-1})=\ker\eta^{d-1}.\]
Therefore, there exists $z \in \ker\eta^{d-1}(\pi)$ such that $(r^{d-1}\circ q^{d-1})(\pi)(z)=(r^{d-1}\circ q^{d-1})(\pi)(y)$. By the induction assumption that $ r^{d-1}\circ q^{d-1}$ is injective, we have that $z=y$. Therefore, $y\in\ker \eta^{d-1}(\pi)$, and $x=\eta^{d-1}(\pi) (y) = 0$. Altogether we showed that the restriction of $r^d\circ q^d$ to $\ker\eta^d$ is injective. 

For each non-zero vector $u\in I^d(\tau)$, there exists $(\tau\le\pi)\in\Pi$ and a non-zero maximal vector $s\in I^d(\pi)$ such that $I^d(\tau\le\pi)(u)=s$. By the condition~4 for $I^\bullet$, all maximal vectors in $I^d(\pi)$ are elements of $\ker\eta^d(\pi)$. Because the restriction of $r^d\circ q^d$ to $\ker\eta^d$ is injective, $(r^d\circ q^d)(\pi)(s)\neq 0$. By the commutativity assumed in the definition of a natural transformation (Definition \ref{def:natural-transformation}), \[
I^{d+1}(\tau\le\pi)\big((r^d\circ q^d)(\tau)(u)\big) = (r^d\circ q^d)(\pi)\big( I^{d}(\tau\le\pi)(u)\big) = (r^d\circ q^d)(\pi)(s)\neq 0.
\]
Therefore, $(r^d\circ q^d)(\tau)(u) \neq 0$. That is, $r^d \circ q^d$ is injective. Moreover, because $r^\bullet$ and $q^\bullet$ are morphisms of complexes, $(r^d \circ q^d) (\ker\eta^d) \subseteq \ker\eta^d$. Because the restriction of $r^d\circ q^d$ to $\ker\eta^d$ is injective, we have the desired equality: $(r^d \circ q^d) (\ker\eta^d) = \ker\eta^d$.

Since $r^d\circ q^d(\pi)$, for $\pi\in\Pi$, is an endomorphism of finite-dimensional vector spaces, it is injective iff it is surjective. Therefore, $q^d$ is injective and $r^d$ is surjective, as claimed.

Finally, if both complexes satisfy condition $4$, then by a symmetric argument, also $r^d$ is injective, and $q^d$ is surjective. In particular, $q^d$ is an isomorphism of sheaves, i.e., $q^\bullet$ is an isomorphism of complexes.

\end{proof}

\begin{definition}\label{def:derived-category}
    The \emph{bounded derived category of sheaves} on $\Pi$, $D^b(\Pi)$, consists of a set of objects 
    \[
    \Ob D^b(\Pi) = \{(I^\bullet,\eta^\bullet)\text{ a minimal complex of injective sheaves}\}
    \]
    and a vector space of morphisms between any two objects
    \[
    \Hom_{D^b(\Pi)}(I^\bullet, J^\bullet)
    \]
    which is defined to be the quotient space of the (finite-dimensional) vector space of all morphisms (of complexes) from $I^\bullet$ to $J^\bullet$ modulo the subspace of null-homotopic morphisms. 
\end{definition}

\begin{remark}
It follows from Theorem \ref{thm:minimal_injective_complex} that the above definition of the bounded derived category of sheaves is in fact a skeleton of the category which is traditionally referred to as the bounded derived category of sheaves~\cite{KashiwaraSchapira1994,Bredon1997}---see \ref{app:section:correspondence} for more details. We choose this definition in order to streamline and simplify the computational perspective of derived categories considered below. Any concepts we define for the elements of $D^b(\Pi)$, in the sense above, apply for any (bounded) complex of sheaves on $\Pi$. An element $I^\bullet\in D^b(\Pi)$ is a chosen representative of a class of quasi-isomorphic complexes---the unique minimal injective member of it.
\end{remark}

\subsection{Recognizing Minimality and Minimizing an Injective Complex}\label{sec:derived_categories:algorithm_minimize}

With labeled matrix representations we can easily recognise whether a complex of injective sheaves is minimal, and obtain a minimal complex in case it is not. 

\begin{proposition}\label{prop:recognising_minimality_labeled_matrices}
    A complex of injective sheaves $(I^\bullet, \eta^\bullet)$ represented by labeled matrices is minimal iff $\eta^d[\pi, \pi]$ is a zero (or empty) matrix for each $\pi\in\Pi$ and degree $d$.
\end{proposition}
\begin{proof}
    The claim is a reformulation of the condition~(\ref{thm:minimal_injective_complex_maximal_vectors}) in Theorem~\ref{thm:minimal_injective_complex}: the complex is minimal iff all maximal vectors are sent to zero by the complex maps $\eta^d$. If we write $I^d = \bigoplus_{\pi\in\Pi} [\pi]^{p_\pi}$, then the maximal vectors in $I^d(\pi)$ are exactly vectors in $[\pi]^{p_\pi}(\pi)$ (as a subspace of $I^d[\pi]$).
    In other words, written with respect to the bases fixed for the complex of labeled matrices, a vector $s\in I^d(\pi)$ is maximal iff it is zero at the coordinates labeled by $\St\pi \setminus\{\pi\}$. Then $\eta^d(\pi)(s) = \eta^d[\St\pi, \St\pi]\cdot s$ is zero for all maximal $s\in I^d(\pi)$ iff $\eta^d[\pi,\pi]=0$.
\end{proof}

\paragraph{The peeling procedure} If $(I^\bullet, \eta^\bullet)$ is not minimal, we perform allowed row and column operations so that the non-zero entries in $\eta^d[\pi, \pi]$ are on the main diagonal, and, for each, the corresponding row and column in the whole labeled matrix $\eta^d$ only contain that one single non-zero element. This row and column then represents the sequence $0 \rightarrow [\pi]_s \rightarrow [\pi]_{\eta^d(\pi)(s)} \rightarrow 0$ from the proof of Theorem~\ref{thm:minimal_injective_complex}, and can be safely removed from the complex---unlike in the proof of the theorem, here we see that the sequence actually splits off the complex as a direct summand. We repeat this process until $\eta^d[\pi, \pi]=0$ for each $\pi\in\Pi$ and degree $d$, which then verifies minimality by Proposition~\ref{prop:recognising_minimality_labeled_matrices}. The procedure is described in Algorithm~\ref{algo:peeling}.

\begin{algorithm}[ht]
    \caption{Computing minimal complex: Peeling}\label{algo:peeling}
    \textbf{Input:} $\eta^0,\dots,\eta^n$ labeled matrices representing an injective complex  $(I^\bullet,\eta^\bullet)$ on $\Pi$ \\
    \textbf{Output:} $\hat\eta^0,\dots,\hat\eta^n$ labeled matrices representing the minimal injective complex quasi-isomorphic to $(I^\bullet,\eta^\bullet)$ \\
    \begin{algorithmic}[1]
        \While{$\exists d\in\{0,\dots,n\} \ \exists \pi\in \Pi$ such that $\eta^d[\pi,\pi]$ is a non-zero matrix}
            \State find labeled matrices $L,R$ such that:
                \begin{itemize}
                    \item $(L\!\cdot\! \eta^d\!\cdot\! R)[\pi,\pi]$ is a diagonal matrix with entries in $\{0, 1\}$
                    \item every $\pi$-labeled column of $L\!\cdot\! \eta^d\!\cdot\! R$ is either zero on $\pi$ or on $\Pi\setminus\pi$
                    \item every $\pi$-labeled row of $L\!\cdot\! \eta^d\!\cdot\! R$  is either zero on $\pi$ or on $\Pi\setminus\pi$
                \end{itemize}
                \Comment{extended Gaussian elimination on $\eta^d[\pi,\pi]$ and further reduction on $\eta^d$}
            \State $\eta^d\gets L \cdot\!\eta^{d}\cdot\! R$,\hspace{3mm} $\eta^{d-1}\gets R^{-1}\cdot\eta^{d-1}$,\hspace{3mm} $\eta^{d+1}\gets \eta^{d+1} \cdot L^{-1}$
            \Comment{Lemma~\ref{lem:change_of_basis_for_complex}}
            
            \While{$\exists$ $i$-th row, $j$-th column in $\eta^d$ labeled by $\pi$ such that $\eta^d[i,j]=1$}
                \State delete the $i$-th row and the $j$-th column from $\eta^d$
                \State delete the $j$-th row from $\eta^{d-1}$
                \State delete the $i$-th column from $\eta^{d+1}$
            \EndWhile
        \EndWhile
        \State \Return $\eta^0,\dots,\eta^n$
    \end{algorithmic}
\end{algorithm}

\begin{proposition}\label{prop:correctness_peeling}
    Given an injective complex $(I^\bullet, \eta^\bullet)$ represented by labeled matrices, Algorithm~\ref{algo:peeling} outputs a labeled matrix representation of a quasi-isomorphic minimal injective complex.
\end{proposition}
\begin{proof}
    We first need to argue that it is possible to find the desired labeled matrices $L$, $R$ whenever $\eta^d[\pi,\pi]\neq 0$. All elementary operations are allowed for rows and columns labeled by the same element $\pi$. Therefore, following standard Gaussian elimination and further reduction, we reduce $\eta^d[\pi,\pi]$ so that it is a diagonal matrix with entries in $\{0,1\}$. Then we use the $1$ on the diagonal to remove any non-zeros in the corresponding row and column in the whole matrix $\eta^d$---indeed, there can be a non-zero exactly when removing it by subtracting our fixed column or row, respectively, is allowed (see Definition~\ref{def:allowed-operations}). The operations are performed as a change of basis of the whole complex, as described in Lemma~\ref{lem:change_of_basis_for_complex}. The newly produced complex is (quasi-)isomorphic to the original.

    For the new matrices $\eta^\bullet$, we argue that the rows and columns with non-zeros in $\eta^d[\pi,\pi]$ can be peeled off the complex. Let $(i,j)$ be an index of $\eta^d$ with entry $1$ such that both the $i$-th row and the $j$-th column are indexed by $\pi$. By the above construction, both have $0$ in all entries other than their intersection $(i,j)$. Then the complex condition $\eta^d\!\cdot\!\eta^{d-1}=0=\eta^{d+1}\!\cdot\!\eta^d$ implies that the corresponding $j$-th row in $\eta^{d-1}$ and $i$-th column in $\eta^{d+1}$ are both zero. Therefore, we have identified an exact sequence $0\rightarrow [\pi] \rightarrow [\pi] \rightarrow 0$ that splits from the complex, and removing it produces a quasi-isomorphic complex, as argued in the proof of Theorem~\ref{thm:minimal_injective_complex}. The deletions from the labeled matrices described in the algorithm correspond to this removal.

    Finally, the algorithm clearly terminates, because each iteration of the main loop removes some non-zero number of the finitely many rows and columns of the complex of labeled matrices. The outputted complex of labeled matrices is minimal, since it satisfies the condition of Proposition~\ref{prop:recognising_minimality_labeled_matrices}.
\end{proof}

\begin{example}\label{ex:peeling}
Let $\Pi$ be such that $\sigma < \pi < \mu < \lambda$, and consider an injective complex \[
    0 \rightarrow [\lambda] \rightarrow [\mu] \oplus [\pi]^2 \rightarrow [\pi]^3 \oplus [\sigma] \rightarrow [\sigma] \rightarrow 0
\]
given by labeled matrices $\eta^d$. It is not minimal, because $\eta^1[\pi,\pi]\neq 0$. We minimize it using Algorithm~\ref{algo:peeling}. First we reduce the highlighted block $\eta^1[\pi,\pi]$ to a matrix with $1$ only on the main diagonal and $0$ everywhere else. Row operations suffice, and for each we perform the inverse column operation on $\eta^2$. We get matrices $\zeta^d$. The next step is to clear out the rest of the rows and columns that contain a $1$ within the $\zeta^1[\pi, \pi]$ block. We use the diagonal elements to clear out the highlighted non-zeros with two row and two column allowed operations. Finally, the resulting matrices $\delta^d$ contain two $0\rightarrow [\pi] \rightarrow [\pi] \rightarrow 0$ sequences that split off. We can delete all the highlighted rows and columns, obtaining matrices $\gamma^d$ representing a quasi-isomorphic complex \[
    0 \rightarrow [\lambda] \rightarrow [\mu] \rightarrow [\pi] \oplus [\sigma] \rightarrow [\sigma] \rightarrow 0,
\]
which is already minimal, because all $\delta^d[x,x]$ blocks are zero.

\noindent\begin{minipage}{\textwidth}
    \centering
    \includegraphics[width=160mm]{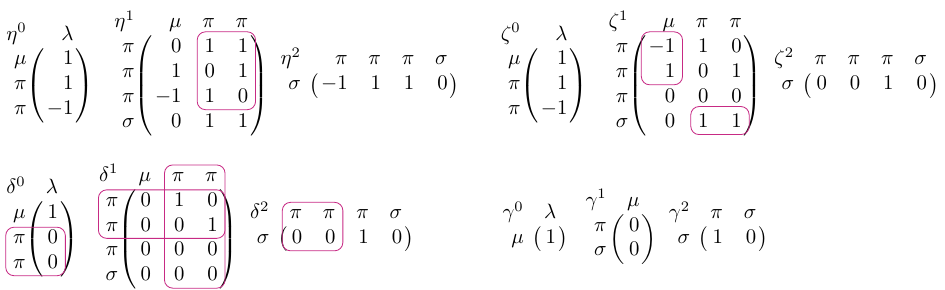}
\end{minipage}
\end{example}

\medskip
As noted above the proposition, this explicit peeling construction also serves as a proof that the minimal injective resolution splits off any injective complex:
\begin{corollary}\label{cor:minimal-exact_decomposition}
    Let $J^\bullet$ be any complex of injective sheaves, and $I^\bullet$ its minimal injective resolution. Then there exists an exact complex of injective sheaves $E^\bullet$, and an isomorphism of complexes
    $
        J^\bullet\cong I^\bullet\oplus E^\bullet. 
    $
    In other words, the short exact sequence 
    $
        0\rightarrow I^\bullet\xrightarrow{q^\bullet} J^\bullet\rightarrow E^\bullet\rightarrow 0
    $
    splits (in the category of complexes of sheaves). 
\end{corollary}

Apart from the peeling, we also provide an alternative computation to minimize a complex in Section~\ref{sec:pullback_algo}---our computation of $\Rfunc\,\id^*I^\bullet$ (defined there) yields a minimal injective complex quasi-isomorphic to $I^\bullet$. This can be convenient for implementation, as it requires no column operations.

\subsection{Single sheaf in Derived Category: Computing Injective Resolutions}\label{sec:derived_categories:injective_resolutions}

A single sheaf $F$ can be viewed as a complex $\cdots \rightarrow 0 \rightarrow F \rightarrow 0 \rightarrow \cdots$ with $F$ in degree~0. As for any complex, there exists a quasi-isomorphic complex of injective sheaves, $I^\bullet$. Replacing $F$ by $I^\bullet$ allows us to study it within the framework of derived categories. The mapping cone (Definition~\ref{def:mapping_cone_comlex}) of any such quasi-isomorphism is called \emph{injective resolution} of $F$---it is an exact complex by Lemma~\ref{lem:mapping_cone_of_quasi-isomorphism}.

\begin{definition}
\label{def:injective_resolution}
An \emph{injective resolution} of a sheaf $F$
is an exact sequence 
\[
    0\rightarrow F\xrightarrow{\alpha} I^{0}\xrightarrow{\eta^0} I^{1}\xrightarrow{\eta^{1}} I^{2} \xrightarrow{\eta^{2}} \cdots
\]
where $I^j$ is an injective sheaf for each $j$. We denote by $I^\bullet$ the complex 
\[
    \cdots\rightarrow 0\rightarrow I^0\xrightarrow{\eta^0}I^1\xrightarrow{\eta^1}\cdots.
\]
Slightly abusing the naming, we later also say that $I^\bullet$ is an injective resolution of $F$.
\end{definition}

\begin{definition}\label{def:injective_hull}
    The inclusion $F\hookrightarrow I^0$ into an injective sheaf is called \emph{injective hull} of $F$.
\end{definition}
Any such inclusion can be extended to an injective resolution, and we give an explicit algorithm to compute such extension below.

By Theorem~\ref{thm:minimal_injective_complex}, there exists a unique minimal injective resolution. It is straightforward to bound its length in terms of the height of the poset $\Pi$. First, note that the condition (\ref{thm:minimal_injective_complex_maximal_vectors}) of the theorem can be reformulated in terms of images, since injective resolutions are exact.

\begin{proposition}\label{lem:minimal_injective_resolution_im_condition}
    An injective resolution $0\rightarrow F\xrightarrow{\alpha} I^{0}\xrightarrow{\eta^0} I^{1}\xrightarrow{\eta^{1}} I^{2} \xrightarrow{\eta^{2}} \dots$ is minimal iff the following holds:
    \begin{itemize}
        \item Each maximal vector $s\in I^d(\pi)$ is in $\im \eta^{d-1}(\pi)$, for each $d>0$ and $\pi\in\Pi$. Each maximal vector $s\in I^0(\pi)$ is in $\im \alpha(\pi)$ for each $\pi\in\Pi$.
    \end{itemize}
\end{proposition}

\begin{corollary}\label{cor:length_of_resolution}
    The minimal injective resolution of a sheaf $F$ on a finite poset $\Pi$ of height~$h$ consists of at most $h+1$ non-zero injective sheaves.
\end{corollary}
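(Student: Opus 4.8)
The plan is to prove the following sharper statement, from which the corollary is immediate: \emph{if the indecomposable injective sheaf $[\pi]$ occurs as a summand of $I^d$, then $\Pi$ contains a chain $\pi=\tau_0<\tau_1<\dots<\tau_d$ of length $d$ with minimum $\pi$.} Since a poset of height $h$ has no chain of length greater than $h$, this forces $I^d=0$ for $d>h$, and since $I^d=0$ for $d<0$ by definition of an injective resolution, the only possibly nonzero terms are $I^0,\dots,I^h$, i.e.\ at most $h+1$ of them.

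First I would record an elementary observation about maximal vectors of an injective sheaf. If $I\cong\bigoplus_\rho[\rho]^{m(\rho)}$ is a decomposition as in Proposition~\ref{prop:decomposition_of_injective_sheaves}, then $I(\pi)=\bigoplus_{\rho\ge\pi}k^{m(\rho)}$, and a direct computation from Definition~\ref{def:indecomposable_injective_sheaf} (using $[\rho](\pi\le\gamma)=\id$ exactly when $\pi\le\gamma\le\rho$) shows that a vector of $I(\pi)$ is maximal in the sense of Definition~\ref{def:maximal-vectors} precisely when all of its components in the summands $[\rho]^{m(\rho)}$ with $\rho>\pi$ vanish. In particular $M_I(\pi)$ is exactly the $[\pi]$-part of $I(\pi)$, so $[\pi]$ occurs in $I$ iff $M_I(\pi)\ne 0$, and any \emph{non}-maximal vector of $I(\pi)$ has a nonzero component in some $[\rho]^{m(\rho)}$ with $\rho>\pi$.

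Next I would prove the claim by induction on $d\ge 0$. The case $d=0$ is vacuous (the length-zero chain $\pi$ always exists). For $d\ge 1$, assume $[\pi]$ occurs in $I^d$, so by the observation there is a nonzero $s\in M_{I^d}(\pi)$. Since $I^\bullet$ is minimal, criterion~3 of Theorem~\ref{thm:main-result} provides $t\in I^{d-1}(\pi)$ with $\eta^{d-1}(\pi)(t)=s$; since $s\ne 0$, criterion~4 of Theorem~\ref{thm:main-result} applied in degree $d-1$ shows $t$ is not a maximal vector of $I^{d-1}$. Hence, by the observation above, $t$ has a nonzero component in some summand $[\rho]^{m(\rho)}$ of $I^{d-1}$ with $\rho>\pi$, so $[\rho]$ occurs in $I^{d-1}$; the inductive hypothesis yields a chain $\rho=\tau_0<\tau_1<\dots<\tau_{d-1}$, and prepending $\pi$ gives a chain of length $d$ with minimum $\pi$. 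This closes the induction and, as explained above, proves the corollary.

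I do not expect a serious obstacle here: the content is bookkeeping rather than a new idea. The two points that need care are (i) the elementary lemma on maximal vectors of injective sheaves in the second paragraph, which must be stated in a way that extracts the "$\rho>\pi$ component" cleanly, and (ii) invoking the correct minimality criterion at each step — criterion~3 to lift $s$ along $\eta^{d-1}$, and criterion~4 \emph{in degree $d-1$} to conclude that the lift is non-maximal — after which the argument is just the inductive chaining of chains.
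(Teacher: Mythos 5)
Your proof is correct and rests on exactly the same mechanism as the paper's: criteria 3 and 4 of Theorem~\ref{thm:main-result}, used to lift a maximal vector back one degree and conclude that the lift must map nontrivially to some strictly larger element. The only difference is one of framing — the paper argues that the support of $I^d$ shrinks from the top as $d$ increases, whereas you invert this into an explicit inductive claim that each summand $[\pi]$ of $I^d$ forces a chain of length $d$ above $\pi$ — a clean and slightly sharper restatement of the same idea.
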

\begin{proof}
    The length of the longest chain of non-zero vector spaces in $I^d$ is strictly decreasing in $d$ in the minimal injective resolution. This is implied by Proposition~\ref{lem:minimal_injective_resolution_im_condition} and Theorem~\ref{thm:minimal_injective_complex}~(\ref{thm:minimal_injective_complex_maximal_vectors}): If $I^d(\pi)=0$, then the former implies that there are no maximal vectors in $I^{d+1}(\pi)$. Therefore, if $I^d(\tau)=0$ for all $\tau\geq\pi$, then also $I^{d+1}(\tau)=0$ for all $\tau\geq\pi$. Moreover, if $I^d(\pi)\neq 0$ and $I^d(\tau)=0$ for all $\tau>\pi$, then all vectors in $I^d(\pi)$ are maximal, and by property (\ref{thm:minimal_injective_complex_maximal_vectors}) and the argument above, $I^{j}(\pi)=0$ for all $j>d$.
\end{proof}

We now turn our attention to the computation of the minimal injective resolution. For a generic sheaf, $F$, one needs to first compute an injective hull, and then continue the sequence to ensure exactness. There are well known algorithms for both steps found in literature on module theory and representation theory. We present our own version of it for two reasons. Firstly, we formulate the algorithms purely in the language of linear algebra so that they can be understood without prior knowledge of module or sheaf theory. Second, we generalize the main step of the construction to novel settings in Section~\ref{sec:derived-functors}.

We discuss the construction of an injective hull for a generic sheaf in \ref{app:injective_hull}. We want to point out that it is feasible to construct it using only basic linear algebra, but one of the strengths of our approach is that we can study interesting geometrical objects without ever explicitly working with a generic non-injective sheaf.

\paragraph*{Minimal injective resolution from injective hull}
We describe how to inductively compute the minimal injective resolution, given the minimal injective hull. That is, given the prefix of the resolution represented by labeled matrices $\eta^0, \dots, \eta^{d-1}$, the goal is to construct its continuation $\eta^d$. We need to construct $\eta^d$ such that \[
    I^{d-1} \xrightarrow{\eta^{d-1}} I^d \xrightarrow{\eta^d} I^{d+1}
\]
is exact at $I^d$, and moreover this extension of the complex is minimal in the sense that all maximal vectors in $I^{d+1}$ are in the image of $\eta^d$. We construct $\eta^d$ again by induction, this time over $\pi\in\Pi$ in non-increasing order.

The procedure will only need access to images of $\eta^{d-1}(\pi)$ for $\pi\in\Pi$. In particular, it can also be used to construct $\eta^0$ if images of $\alpha(\pi)$ are given---this can be done by running Algorithm~\ref{algo:injective_hull_alpha} in the case of a generic sheaf (\ref{app:injective_hull}), but it can be done more efficiently in the case of the constant sheaf, as we describe later in Section~\ref{sec:derived_categories:constant_sheaf}.

Recall that we denote labeled matrices by the same symbols as the maps they represent; the linear map $\eta^d(\pi)$ is described by the submatrix $\eta^d(\pi)=\eta^d[\St\pi,\St\pi]$

\paragraph*{Procedure \makeexact} The construction consists of repeated call of a core procedure \makeexact, described as Algorithm~\ref{algo:makeeact}. On the input, it takes $\eta^{d-1}$, $\eta^d$ and $\pi\in\Pi$, where $\eta^{d-1}$, $\eta^d$ are two labeled matrices representing a complex\[
    I^{d-1} \xrightarrow{\eta^{d-1}} I^d \xrightarrow{\eta^d} I^{d+1},
\]
On the output, it returns a labeled matrix representing a map $\hat\eta^d: I^d\rightarrow \hat I^{d+1}$ such that $\im{\eta^{d-1}(\pi)} = \ker{\hat\eta^{d}(\pi)}$, and, importantly, $\hat\eta^{d}(\tau) = \eta^{d}(\tau)$ for all $\tau\not\leq\pi$.

\begin{algorithm}[htb]
    \caption{\makeexact$(\eta^{d-1},\eta^d,\pi)$}
    \label{algo:makeeact}
    \hspace*{\algorithmicindent} \textbf{Input:} $\eta^{d-1}, \eta^d$ labeled matrices over a poset $\Pi$, s.t. $\eta^d\cdot\eta^{d-1}=0$, and $\pi\in\Pi$\\
    \hspace*{\algorithmicindent} \textbf{Output:} $\hat\eta^{d}$ a labeled matrix
\begin{algorithmic}[1]
        \State $B \gets$ basis of $\big(\im(\eta^{d-1}(\pi))\big)^{\perp}$ 
        \ForAll{$b\in B$}
            \If{$b$ is linearly independent from the rows of $\eta^d(\pi)$}
                \State add $b$ as a row of $\eta^{d}$, labeled by $\pi$
                \Comment{with zeros at $\Pi\setminus\St\pi$ entries}
            \EndIf
        \EndFor
        \State \Return $\eta^d$
\end{algorithmic}
\end{algorithm}

The goal is to make $I^\bullet(\pi)$ exact in degree $d$. We achieve this by adding new linearly independent rows to $\eta^d(\pi)$ from the space
\[
\big(\im\eta^{d-1}(\pi)\big)^{\perp}=\left\{v\in k^{\dim I^{d}(\pi)}\,\middle|\, \forall u\in\im\eta^{d-1}(\pi): v\cdot u=0\right\},
\]
until we cannot add any more. To insert the new rows in the full labeled matrix $\eta^d$, we fill them by zeros in all columns not labeled by $\St\pi$. Each new row is labeled by $\pi$, and represents a new copy of $[\pi]$ in $\hat I^{d+1}$.

\begin{lemma}\label{lem:makeexact}
    Let $\eta^{d-1}$, $\eta^d$ and $\pi\in\Pi$ are as above, and $\hat\eta^d\coloneqq$ \makeexact$(\eta^{d-1},\eta^d,\pi)$. Then
    \begin{enumerate}[(1)]
        \item\label{itm:makeexact_change} $\hat\eta^d$ differs from $\eta^d$ only by having extra rows labeled by $\pi$,
        \item\label{itm:makeexact_exact} $I^{d-1}(\pi) \xrightarrow{\eta^{d-1}(\pi)} I^{d}(\pi) \xrightarrow{\hat\eta^{d}(\pi)} \hat I^{d+1}(\pi)$ is exact at $I^d(\pi)$,
        \item\label{itm:makeexact_minimal} if $e_i,\dots,e_l\in\hat I^{d+1}(\pi)$ are the canonical vectors corresponding to the rows newly added to $\hat\eta^d(\pi)$, then $\Span(e_i,\dots,e_l)\subseteq\im\hat\eta^d(\pi)$.
    \end{enumerate}
\end{lemma}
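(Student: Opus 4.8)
\textbf{Proof plan for Lemma~\ref{lem:makeexact}.}

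The plan is to verify the three claims in order, exploiting the explicit structure of \makeexact\ together with the labeled-matrix dictionary from Section~\ref{sec:labeled_matrices}. Claim~\eqref{itm:makeexact_change} is essentially by inspection: the only modification \makeexact\ performs is on line~4, where it appends rows to $\eta^d$, each labeled by $\pi$ and with zero entries outside the columns labeled by $\St\pi$. I would note that this is a legal labeled matrix (a row labeled $\pi$ may have nonzero entries only in columns labeled $\sigma\ge\pi$, i.e.\ columns in $\St\pi$, which is exactly how we fill them), so $\hat\eta^d$ is again a labeled matrix, and by construction it restricts to $\eta^{d-1}$ at every $\sigma$ not above $\pi$; the only change to any local map $\eta^d(\tau)=\eta^d[\St\tau,\St\tau]$ occurs when $\tau\le\pi$, i.e.\ when the new $\pi$-rows appear among the rows of $\St\tau$.

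For Claim~\eqref{itm:makeexact_exact}, I would first observe that $\hat\eta^d(\pi)\cdot\eta^{d-1}(\pi)=0$ still holds: the old rows of $\eta^d(\pi)$ compose to zero by hypothesis, and each new row $b$ lies in $(\im\eta^{d-1}(\pi))^\perp$ by the choice on line~1, so $b\cdot u = 0$ for every $u\in\im\eta^{d-1}(\pi)$. Hence $\im\eta^{d-1}(\pi)\subseteq\ker\hat\eta^d(\pi)$. For the reverse inclusion, the point is that after the loop terminates, $\big(\im\eta^{d-1}(\pi)\big)^\perp$ is spanned by (the old rows of $\eta^d(\pi)$ together with) the newly added rows: indeed, every $b\in B$ that was \emph{not} added was linearly dependent on the rows already present, so the full row space of $\hat\eta^d(\pi)$ contains $B$, hence contains $\big(\im\eta^{d-1}(\pi)\big)^\perp$; and conversely every row of $\hat\eta^d(\pi)$ annihilates $\im\eta^{d-1}(\pi)$, so the row space equals $\big(\im\eta^{d-1}(\pi)\big)^\perp$ exactly. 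Then $\ker\hat\eta^d(\pi)=\big(\text{rowspace of }\hat\eta^d(\pi)\big)^\perp=\big(\im\eta^{d-1}(\pi)\big)^{\perp\perp}=\im\eta^{d-1}(\pi)$, using that we work over a field and all spaces are finite dimensional, which is precisely exactness at $I^d(\pi)$.

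Claim~\eqref{itm:makeexact_minimal} is the one requiring slightly more care, and I expect it to be the main (if still mild) obstacle, since it is a statement about the \emph{image} rather than the kernel. The canonical vectors $e_i,\dots,e_l$ in $\hat I^{d+1}(\pi)$ correspond to the rows of $\hat\eta^d(\pi)$ that were added by \makeexact, and the image of $\hat\eta^d(\pi)$, read in these coordinates, has $j$-th coordinate (for $j$ among the new indices) equal to $b_j\cdot v$ as $v$ ranges over $I^d(\pi)=k^{\dim I^d(\pi)}$. So it suffices to show the new rows $b_i,\dots,b_l$ are linearly independent as vectors in $k^{\dim I^d(\pi)}$: then the linear map $v\mapsto (b_i\cdot v,\dots,b_l\cdot v)$ is surjective onto $\Span(e_i,\dots,e_l)$, hence that span lies in $\im\hat\eta^d(\pi)$ (after composing with the inclusion of $\Span(e_i,\dots,e_l)$ as the relevant coordinate subspace of $\hat I^{d+1}(\pi)$, which is where one must be careful that there are no cross-terms from the old rows; but the old rows live in the complementary coordinates, so the projection onto the new coordinates is exactly $v\mapsto(b_i\cdot v,\dots,b_l\cdot v)$). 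Linear independence of $b_i,\dots,b_l$ is guaranteed by the loop: a vector $b\in B$ is added only if it is linearly independent from all rows currently in $\eta^d(\pi)$, in particular from all previously added $b$'s, so the added rows form a linearly independent set by induction on the order in which they are processed. This completes the proof.
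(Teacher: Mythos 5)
Parts (1) and (2) track the paper's proof closely: (1) is by inspection, and (2) is the same orthogonal-complement/transpose argument, just narrated in more detail instead of written as a chain of equivalences. Both are fine.

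Part (3), however, has a genuine gap. You claim that linear independence of the newly added rows $b_i,\dots,b_l$ alone suffices, and you justify this by saying the map $v\mapsto(b_i\cdot v,\dots,b_l\cdot v)$ is then surjective, "hence" $\Span(e_i,\dots,e_l)\subseteq\im\hat\eta^d(\pi)$. That inference is false: it only shows the \emph{projection} of $\im\hat\eta^d(\pi)$ onto the new coordinates is all of $k^{l-i+1}$, which does not imply the image \emph{contains} the coordinate subspace (compare the diagonal $\{(x,x)\}\subset k^2$, which projects onto both axes but contains neither). Concretely, the $v$ achieving a given target in the new coordinates may fail to lie in the kernel of the old rows, so $\hat\eta^d(\pi)v$ may have nonzero old-row coordinates. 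Mutual independence of the $b_j$'s among themselves is simply not sufficient: if one added a $b$ independent from the other $b$'s but lying in the span of the old rows, $\Span(e_i,\dots,e_l)$ would \emph{not} lie in the image, yet your criterion would be met. You actually record the correct fact — each $b_j$ is independent from \emph{all} rows present when it is added, including the old ones — but then only use independence among the $b_j$'s. What the paper does (and what is needed) is to exploit the full independence: writing $A_j$ for the matrix of the first $j$ rows of $\hat\eta^d(\pi)$, the strict independence of row $j$ from rows $1,\dots,j-1$ gives a strict inclusion $\ker A_j\subsetneq\ker A_{j-1}$, hence a $u_j\in\ker A_{j-1}$ with $A_ju_j=\lambda e_j$ for some $\lambda\neq0$. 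The vectors $\hat\eta^d(\pi)u_i,\dots,\hat\eta^d(\pi)u_l$ are then in row-echelon form on the new coordinates and vanish on all earlier coordinates, which gives $\Span(e_i,\dots,e_l)\subseteq\im\hat\eta^d(\pi)$. That is the missing step.
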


\begin{proof}
    The point~\ref{itm:makeexact_change} is clear from the construction; we prove the other two. For~\ref{itm:makeexact_exact} we need to show that $\im \eta^{d-1}(\pi)=\ker \hat\eta^d(\pi)$. By assumption, $\im\eta^{d-1}(\pi)\subseteq\ker\eta^d(\pi)$. Observe that
        \begin{alignat*}{2}
            &\im\eta^{d-1}(\pi) &&= \ker\hat\eta^d(\pi) \\
            \Longleftrightarrow \ \ 
            &\ker\eta^{d-1}(\pi)^T &&= \im\hat\eta^d(\pi)^T \\
            \Longleftrightarrow \ \ 
            &\im\eta^{d-1}(\pi)^\perp &&= \mathrm{span}(\text{rows of }\hat\eta^d(\pi)),
        \end{alignat*}
    where the first equivalence comes from linear duality, and the second is just rewriting both sides of the equality. The same holds for inclusions, with the inclusion direction flipping at the first equivalence---hence, we start with $\im\eta^{d-1}(\pi)^\perp \supseteq \mathrm{span}(\text{rows of }\eta^d(\pi))$. In the algorithm, we add vectors from the left side as new rows in $\hat\eta^d(\pi)$, until the equality is achieved, securing the exactness.
    
    To show~\ref{itm:makeexact_minimal}, assume that rows $i,\dots,l$ were added to $\hat\eta^d(\pi)$, and let $e_j=(\dots,0,1,0,\dots)$ denote the $j$-th canonical vector. We show that $\Span(e_i,\dots,e_l)\subseteq \im\hat\eta^d(\pi)$. Let $A_j$ be the matrix consisting of the first $j$ rows of $\hat\eta^d(\pi)$. As we only add new rows when they are linearly independent from the previous, we have $\ker A_j\subsetneq \ker A_{j-1}$ for every $j\in\{i,\dots,l\}$. That is, there exists $u_j$ such that $A_{j-1} u_j = 0$, and $A_j u_j \neq 0$. This is only possible if $A_j u_j = \lambda e_j$ for some $\lambda\neq 0$, which means that $\eta^d(\pi) \cdot u_j = A_l u_j$ is a vector with zeros at positions $1,\dots,j-1$, and a non-zero at the $j$-th position. Therefore, $\Span(e_i,\dots,e_l) =
            \Span\left(\hat\eta^d(\pi)\cdot u_i, \dots,\hat\eta^d(\pi)\cdot u_l\right) \subseteq
            \im\hat\eta^d(\pi)$.
\end{proof}

\paragraph*{Extending the minimal injective resolution from $\eta^{d-1}$ to $\eta^{d}$}
The construction of $I^{d+1}$ and $\eta^d$ is described by Algorithm~\ref{algo:injective_resolution_tail}. We start with an empty matrix $\eta^d$ and repeatedly call \makeexact\  for all $\pi\in\Pi$ in a non-increasing order. In the next subsection, Example~\ref{ex:4-simplex_with_extra_edges} goes through the algorithm in detail on a concrete example.

\begin{algorithm}[ht]
    \caption{A step in the minimal injective resolution}
    \label{algo:injective_resolution_tail}
    \hspace*{\algorithmicindent} \textbf{Input:} $\eta^{d-1}$ a labeled matrix over a poset $\Pi$\\
    \hspace*{\algorithmicindent} \textbf{Output:} $\eta^{d}$ a labeled matrix
\begin{algorithmic}[1]
    \State $\eta^d \gets$ a labeled matrix with no rows and with columns labeled as rows of $\eta^{d-1}$
    \For{$\pi \in \Pi$ in a non-increasing order}
        \State $\eta^d \gets$ \makeexact$(\eta^{d-1},\eta^d,\pi)$
        \Comment{Algorithm~\ref{algo:makeeact}}
    \EndFor
    \State \Return $\eta^d$
\end{algorithmic}
\end{algorithm}

\begin{lemma}\label{lem:correctness_injective_resolution_sheaf}
    Starting with the minimal injective hull of $F$, iterative application of Algorithm~\ref{algo:injective_resolution_tail} yields the minimal injective resolution of $F$.
\end{lemma}
\begin{proof}
Each sheaf is injective by definition. By \ref{itm:makeexact_exact} from Lemma~\ref{lem:makeexact}, we get exactness of $I^\bullet(\pi)$ in degree $d$ right after the call of the procedure \makeexact$(\eta^{d-1},\eta^d,\pi)$. Since we go through the degrees in the increasing order, and through the poset elements in a non-increasing order, $\eta^{d-1}(\pi)$ and $\eta^d(\pi)$ both stay the same from this point on, and hence $I^\bullet(\pi)$ is still exact in degree $d$ at the end of the algorithm.

To show the minimality, we verify the condition on maximal vectors from Proposition~\ref{lem:minimal_injective_resolution_im_condition}. Maximal vectors in $I^{d+1}$ over $\pi$, $M_{I^{d+1}}(\pi)$, are exactly the new vectors added at step $\pi$. That is, if rows $i,\dots,l$ were added to $\eta^d(\pi)$ at step $\pi$, $M_{I^{d+1}}(\pi)=\Span(e_i,\dots,e_l)$. By \ref{itm:makeexact_minimal} from Lemma~\ref{lem:makeexact}, this space is contained in $\im\eta^d(\pi)$.
\end{proof}

\begin{remark}[Computing the orthogonal complement]
In Algorithm~\ref{algo:makeeact}, we purposefully leave out any particular way how to compute the basis of $(\im(\eta^{d-1}(\pi)))^{\perp}$. One way to compute it is via a standard row reduction algorithm: we start with $U\gets \text{identity matrix}$, $R\gets \eta^{d-1}(\pi)$, and we reduce rows from top to bottom, reducing each by adding the rows above it to push the left-most non-zero as much to the right as possible. Every row operation performed on $R$ is also performed on $U$, so that $R=U\cdot\eta^{d-1}(\pi)$. We end up with a lower-triangular matrix $U$ such that all its rows corresponding to the zero rows of $R$ form a basis of $(\im(\eta^{d-1}(\pi)))^{\perp}$.

An immediate advantage of this approach is that we only ever work with rows of $\eta^{d-1}(\pi)$. This means we can represent the matrices $\eta^d$ in a row-wise sparse representation, e.g., a list of rows, each represented as a ``\textit{column index} $\rightarrow$ \textit{value}'' dictionary. In this representation, when we go from $\eta^d$ to $\eta^d(\pi)$, we just choose all the rows labeled by $\St\pi$---there is no need to crop the rows themselves, as all entries not labeled by $\St\pi$ are $0$.
\end{remark}



\subsection{Injective Resolution of the Constant Sheaf}\label{sec:derived_categories:constant_sheaf}

We describe how to use Algorithm~\ref{algo:injective_resolution_tail} to compute the minimal injective resolution of a constant sheaf, give the geometrical meaning of multiplicities of indecomposables in the minimal resolution, showcase the introduced notions with and example, and finally compute the complexity of computing the minimal injective resolution of the constant sheaf.

\medskip
For the constant sheaf, the construction of the minimal injective hull $k_{\Pi}\xrightarrow{\alpha} I^0$ is very straightforward: $I^0=\bigoplus_{j=1}^n[\pi_j]$ where $\pi_1,\dots,\pi_n$ are the maximal elements of $\Pi$, each exactly once. The map $\alpha$ is given by the diagonal embedding
\begin{align*}
    \alpha(\sigma): k_{\Pi}(\sigma) &\longrightarrow I^0(\sigma) \\
    1 &\mapsto (1,\dots, 1)^T.
\end{align*}
Conveniently, we can represent this particular injection $\alpha$ as a labeled matrix: we define $\eta^{-1}$ as a column of ones with rows labeled by the maximal elements of $\Pi$. We label the column by a new ``virtual'' element greater than all elements of $\Pi$. To obtain the minimal injective resolution of $k_\Pi$, we now iteratively run Algorithm~\ref{algo:injective_resolution_tail} starting with the input $\eta^{-1}$.

Going one step further, we can even find the maximal elements of $\Pi$ as an iteration of Algorithm~\ref{algo:injective_resolution_tail}. Indeed, we start with an (empty) matrix $\eta^{-2}$ with no columns and one row labeled by the ``virtual'' element greater than all elements of $\Pi$. Running Algorithm~\ref{algo:injective_resolution_tail} on $\eta^{-2}$ then yields $\eta^{-1}$.

To summarise, we can compute the minimal injective resolution of the constant sheaf by iteratively running Algorithm~\ref{algo:injective_resolution_tail} starting with $\eta^{-2}$, and then keep matrices $\eta^d$ only for $d\geq 0$. Note that this is not just a lucky coincidence. This trick is a special case of computing a pullback, which we introduce later in Section~\ref{sec:pullback_algo}. For now, even though we have not defined a pullback yet, we only remark that the constant sheaf $k_{\Pi}$ is the pullback $f^\ast k_{\mathrm{pt}}$ for the constant map $f:\Pi\rightarrow\mathrm{pt}$.

\paragraph{Multiplicity of the Indecomposable Injective Sheaves}

The multiplicity of an indecomposable injective sheaf in the unique minimal injective resolution is a well-defined invariant of $F$. It is natural to ask what topological information is captured with these multiplicities. Below we answer this question for the constant sheaf on a finite simplicial complex. 
\begin{definition}\label{def:multiplicity_of_generators}
  Let $I^\bullet$ be the minimal injective resolution of $F$. By $m^d_F(\sigma)$ we denote the multiplicity of $[\sigma]$ in $I^d$:
\[
I^d \cong \bigoplus_{\sigma\in\Pi}[\sigma]^{m^d_F(\sigma)}.
\]
Equivalently, we can define 
$m^d_F(\sigma):=\dim M_{I^d}(\sigma)$, 
where $M_{I^d}(\sigma)$ is as in Definition \ref{def:maximal-vectors}.
\end{definition}
\begin{theorem}\label{thm:multiplicities}
Let $\Sigma$ be a finite simplicial complex, $k_\Sigma$ the constant sheaf on $\Sigma$ (viewed as a poset with the face relation), and $H_c^\bullet (|\St\sigma|;k)$ be the singular cohomology with compact support of the geometric realization of $\St\sigma$ (see also Definition~\ref{def:cohomology_functorial}). Then
\[
m^d_{k_\Sigma}(\sigma)= \dim H_c^{d+\dim\sigma}(|\St\sigma|;k).
\]
\end{theorem}
See Appendix \ref{app:thm:multiplicities} for a proof of the above theorem.

\paragraph{Examples}

We demonstrate how Algorithm~\ref{algo:injective_resolution_tail} works for constant sheaves with two examples.

\begin{example}\label{ex:4-simplex_with_extra_edges}
Consider the $3$-skeleton of the $4$-simplex, with two extra edges attached to vertex~$1$. We compute the minimal injective resolution of $\Pi\coloneqq\St(1)$. We describe simplices as lists of vertices, and for brevity omit the vertex $1$---e.g., $234=\{1,2,3,4\}$. See Figure~\ref{fig:4-simplex_with_extra_edges_poset}.

\begin{figure}[htb]
    \centering
    \includegraphics[width=140mm]{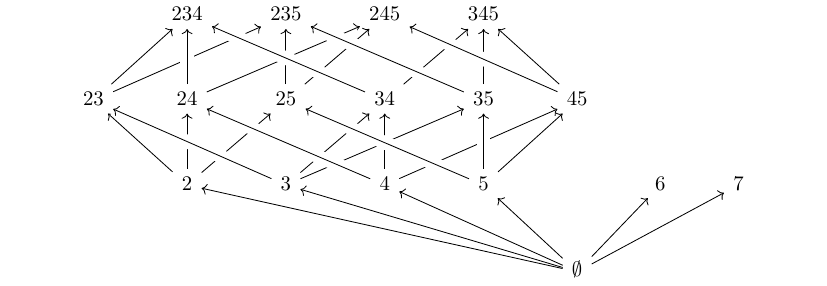}
    \caption{The poset considered in Example~\ref{ex:4-simplex_with_extra_edges}. We omit vertex $1$ from the labels.}
    \label{fig:4-simplex_with_extra_edges_poset}
\end{figure}

\begin{figure}[htb]
    \centering
    \includegraphics[width=160mm]{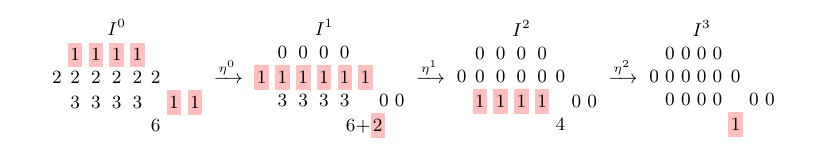}
    \caption{The resolution in Example~\ref{ex:4-simplex_with_extra_edges}. The numbers indicate the dimension at each simplex, positioned as in Figure~\ref{fig:4-simplex_with_extra_edges_poset}. The pink background indicates the generators. For example, in $I^1(\emptyset)$ we have 6 dimensions coming from generators above $\emptyset$, and we have 2 more generators at $\emptyset$.}
    \label{fig:4-simplex_with_extra_edges_resolution}
\end{figure}

The generators of $I^0$ are the maximal simplices $(234, 235, 245, 345, 6, 7)$, and $\eta^{-1}: k_{\Pi}\rightarrow I^0$ is the diagonal embedding for each $\pi\in\Pi$. We construct $I^1$ and $\eta^0$ as in Algorithm~\ref{algo:injective_resolution_tail}, with inputs $I^0,\eta^{-1}$. Initialize $I^1$ and $\eta^0$ empty, and go through the simplices row-by-row left-to-right as they are in Figure~\ref{fig:4-simplex_with_extra_edges_poset}. Starting with $234$, the space $I^0(234)$ is 1-dimensional and equal to $\im\eta^{-1}(234)$, so there is nothing to be added, and $\eta^0(234)=0$. The same happens for all the maximal simplices.

At triangle $23$, we have $I^0(23)=k^2$, since two generators are above $23$. At the moment, $\eta^0(23)$ is empty, so its kernel is $k^2$. We need $\ker\eta^0(23)=\im\eta^{-1}(23)=\Span\{(1,1)\}$. The orthogonal complement of $\im\eta^{-1}(23)$ is generated by the vector $(1,-1)$. We add it as a new row in $\eta^0(23)$. Therefore, we add $23$ to $I^1$, and add a first row to $\eta^0$; see Figure~\ref{fig:4-simplex_with_extra_edges_matrix}. Similarly, we add one row for each other triangle.

Now for the edges. We have $I^0(2)=k^3$, and $\eta^0(2)$ a $3\times 3$ matrix, highlighted as a green solid rectangle in Figure~\ref{fig:4-simplex_with_extra_edges_matrix}. 
We already have $\ker\eta^0(2)=\Span\{(1,1,1)\}=\im\eta^{-1}(2)$, so we do not add any new generators over $2$. The same goes for $\eta^0(3), \eta^0(4), \eta^0(5)$, each of which you can see highlighted in Figure~\ref{fig:4-simplex_with_extra_edges_matrix} with a different color and line style.

Finally, we get to the vertex $\emptyset$, with $\eta^0(\emptyset)$ starting as the part of the matrix in Figure~\ref{fig:4-simplex_with_extra_edges_matrix} above the horizontal line. Its rank is $3$, and its nullity is $3$. We need the kernel to be $1$-dimensional, so we need to add two additional rows from $(\Span\{(1,1,1,1,1,1)\})^\perp$. We also add $\emptyset$ to $I^1$ twice. This completes the construction of $I^1$ and $\eta^0$.

The resolution goes on for two more steps: $I^2$ is generated by $(2,3,4,5)$, $I^3$ by $(\emptyset)$. The matrices $\eta^k$ are in Figure~\ref{fig:4-simplex_with_extra_edges_matrix}, and the whole resolution is schematically shown in Figure~\ref{fig:4-simplex_with_extra_edges_resolution}.

\begin{figure}[htb]
    \centering
    \includegraphics[width=140mm]{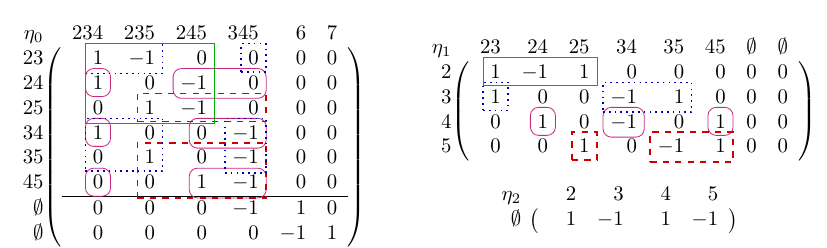}
    \caption{Matrices $\eta^0$, $\eta^1$, $\eta^2$ in Example~\ref{ex:4-simplex_with_extra_edges}, with highlighted submatrices $\eta^k(2)$ (solid green), $\eta^k(3)$ (dotted blue), $\eta^k(4)$ (rounded corners magenta), $\eta^k(5)$ (dashed red). Recall that $\eta^k(\sigma)=\eta^k[\St\sigma,\St\sigma]$, and note that if $\sigma\not\leq\tau$, then $\eta^k[\sigma,\tau]=0$. Recall that the labels omit the vertex $1$ that is present in all the simplices, e.g., $\emptyset$ is the vertex $1$ and $234$ is the tetrahedron $1234$.}
    \label{fig:4-simplex_with_extra_edges_matrix}
\end{figure}
\end{example}

\begin{example}\label{ex:tetraheron_skeleton}
    Let $\Sigma\coloneqq\Dnk{3}{2}$ be the $2$-skeleton of a tetrahedron (whose geometric realization is homeomorphic to the sphere). We give the minimal injective resolution of the constant sheaf $k_{\Sigma}$ in Figure~\ref{fig:tetraheron_skeleton}.
    
    \begin{figure}[htb]
        \centering
        \includegraphics[width=.49\textwidth]{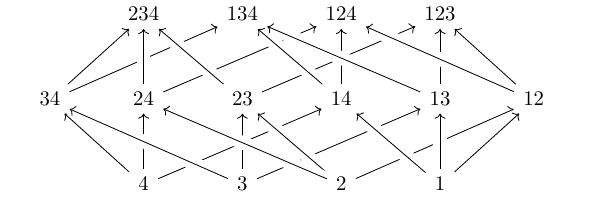}
        \raisebox{.4\height}{\includegraphics[width=.49\textwidth]{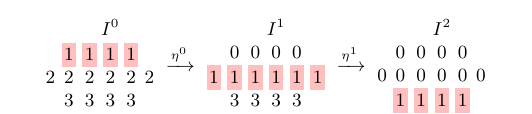}}
        \includegraphics[width=140mm]{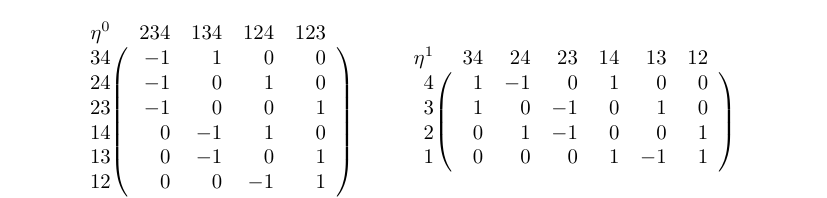}

        \caption{Example~\ref{ex:tetraheron_skeleton}: the minimal injective resolution of the constant sheaf on $\Dnk{3}{2}\cong S^2$. First is $\Dnk{3}{2}$ as a poset, second the dimensions in the injective resolution with highlighted generators, as in Figure~\ref{fig:4-simplex_with_extra_edges_resolution}, and third the matrices describing the natural transformations.}
        \label{fig:tetraheron_skeleton}
    \end{figure}
\end{example}

\paragraph{Complexity Analysis}

We analyze the complexity of finding the minimal injective resolution of the constant sheaf, $k_{\Pi}$, on a poset $\Pi$, with $n$ elements and height $h$, computed by an iterative application of Algorithm~\ref{algo:injective_resolution_tail}. That is, we start with $k_{\Pi}\xrightarrow{\eta^{-1}}I^0$ the minimal injective hull of the constant sheaf as described above, and then iteratively apply Algorithm~\ref{algo:injective_resolution_tail} until $I^d = 0$.

The call of \makeexact\ consists of finding a basis of $(\im(\eta^{d-1}(\sigma)))^{\perp}$, and checking for linear independence of rows of $\eta^d(\sigma)$. Both of those operations can be computed in time at most $\mathcal{O}(c^3)$ with $c$ the maximum of the number of rows of $\eta^{d-1}(\sigma)$ and $\eta^{d}(\sigma)$. In our analysis we ignore the complexity of finding $\St\sigma$ to extract the submatrices from $\eta^d$ in the first place, since it is less expensive than $\mathcal{O}(c^3)$ when we estimate $c$ by the size of $\St\sigma$.

By Corollary~\ref{cor:length_of_resolution}, the length of the minimal injective resolution is at most $h+1$. Therefore, we find it in time $\mathcal{O}(h\cdot n\cdot c^3)$, where \[c=\max_{j,\sigma} \sum_{\pi\in\St\sigma} m^j_{k_{\Pi}}(\pi)\]
is the maximal number of generators over any star throughout the resolution.
This analysis is output-sensitive. To give complexity bounds dependent only on the input, we compare $c$ to the maximal size of a star in $\Pi$. How well we can approximate $c$ this way depends on the structure of $\Pi$.

\begin{definition}\label{def:star_complexity}
    For $\sigma\in\Pi$ we define\[
        m^j(\St\sigma)\coloneqq \sum_{\pi\in\St\sigma}m_{k_{\Pi}}^j(\pi)
    \]
    to be the number of generators over $\St\sigma$ in the $j$-th step of the minimal injective resolution of the constant sheaf on $\Pi$ (recall Definition~\ref{def:multiplicity_of_generators}). Furthermore, we define the \emph{$j$-th star complexity} of $\sigma$ as \[
        \stcplx^j(\sigma)\coloneqq \frac{m^j(\St\sigma)}{\#\St\sigma}.
    \]
\end{definition}

For general posets, $\stcplx^j(\sigma)$ can be arbitrarily large even when lengths of chains are bounded, because sizes of boundaries and coboundaries can be arbitrarily large.
For simplicial complexes, we give an upper bound on $\stcplx^j(\sigma)$ depending on the dimension.

\begin{proposition}\label{prop:upper_bound_on_SC}
    Let $\Pi$ be a simplicial complex, $\sigma\in\Pi$ and $d\coloneqq \dim\St\sigma-\dim\sigma$, where by $\dim\St\sigma$ we mean the maximum dimension of any $\tau\in\St\sigma$. Then\[
        \stcplx^j(\sigma)\leq\binom{d}{j}.
    \]
    This bound is asymptotically tight. If $\Pi=\Dnk{n}{d}$ is the $d$-skeleton of the $n$-simplex, $v$ is a vertex in $\Dnk{n}{d}$, and $j$ is fixed, then \[
        \stcplx^j(v)\xrightarrow{n\rightarrow\infty}\binom{d}{j}.
   \]
\end{proposition}
\begin{proof}
We prove the upper bound using Theorem~\ref{thm:multiplicities} and bounding dimensions of homology groups by dimensions of chain groups:
\begin{multline*}
    m^j(\St\sigma)
    = \sum_{\tau\in\St\sigma} m^j(\tau)
    = \sum_{\tau\in\St\sigma} \dim H_c^{j+\dim\tau}(\St\tau)
    \leq \sum_{\tau\in\St\sigma} \dim C_c^{j+\dim\tau}(\St\tau)
    \\ = \sum_{\tau\in\St\sigma} \# \left\{ \pi \,\middle|\, \tau<_j\pi \right\}
    = \sum_{\pi\in\St\sigma} \# \left\{ \tau \in \St\sigma \,\middle|\, \tau<_j\pi \right\}
    = \sum_{\pi\in\St\sigma} \#\binom{\pi\setminus\sigma}{j} \leq \#\St\sigma\cdot\binom{d}{j},
\end{multline*}
where $d=\dim\St\sigma-\dim\sigma$, and $\tau <_j \pi$ signifies that $\dim\pi - \dim\tau = j$.

Now we analyse $\stcplx^j(\sigma)$ in the $d$-skeleton of the $n$-simplex, $\Dnk{n}{d}$. We use the fact that $\St\sigma$ in $\Dnk{n}{d}$ is combinatorially the same as $\Dnk{n'}{d'}\cup\{\emptyset\}$, with $n'=n-\dim\sigma-1$ and $d'=d-\dim\sigma-1$, using the correspondence $\St\sigma\ni\tau \mapsto \tau\setminus\sigma$. This map induces an isomorphism between the cochain complexes \[
C_c^{\bullet}(\St\sigma) \cong \tilde{C}^{\bullet-\dim\sigma-1}\left(\Dnk{n'}{d'}\right),
\] which, using Theorem~\ref{thm:multiplicities}, implies
\begin{equation*}
    m^j(\sigma) = \dim H_c^{j+\dim\sigma}(\St\sigma) = \dim \tilde{H}^{j-1}\left(\Dnk{n'}{d'}\right).
\end{equation*}
The (standard simplicial) reduced cohomology $\tilde{H}^i\left(\Dnk{n'}{d'}\right)$ is trivial for all $i\neq d'$, and for $i=d'$, we compute the dimension from the Euler characteristic:
\begin{multline*}
    \dim\tilde{H}^{d'}\left(\Dnk{n'}{d'}\right) = 
    (-1)^{d'} \tilde{\chi} \left(\Dnk{n'}{d'}\right) = 
    (-1)^{d'} \left( 1 + \sum_{i=0}^{d'} \binom{n'+1}{i+1} (-1)^i \right) \\ =
    (-1)^{d'-1} \left( \sum_{i=0}^{d'+1} \binom{n'+1}{i} (-1)^i \right) =
    (-1)^{d'-1} \cdot (-1)^{d'+1} \binom{n'}{d'+1} =
    \binom{n'}{d'+1}.
\end{multline*}
Therefore,
\begin{align*}
   m^j(\sigma) = 
    \begin{cases}
        \binom{n-\dim\sigma-1}{d-\dim\sigma} &\text{ if $j = d'+1 = d-\dim\sigma$,} \\
        0 &\text{ otherwise.}
    \end{cases}
\end{align*}
Finally, we compute $m^j(\St v)$ for a vertex $v$:\[
    m^j(\St v)
    = \sum_{\sigma\in\St v} m^j(\sigma)
    = \sum_{\substack{\sigma\in\St v \\ \dim\sigma=d-j}} \binom{n-d+j-1}{j}
    = \binom{n}{d-j}\cdot\binom{n-d+j-1}{j}
\]

We rearrange this as follows
\begin{align*}
    m^j(\St v)
    &= \frac{n!}{(n-d+j)!\ (d-j)!}\cdot \frac{(n-d+j-1)!}{(n-d-1)!\ j!}
    \\&= \frac{n!}{(n-d)!\ d!}\cdot \frac{n-d}{n-d+j}\cdot  \frac{d!}{(d-j)!\ j!}
    = \binom{n}{d} \binom{d}{j} \frac{n-d}{n-d-1}.
\end{align*}
Now we can easily compare this with $\# \St v = \sum_{i=0}^d \binom{n}{i}$. When we fix $d$ and $j$, we get \[
    \lim_{n\rightarrow\infty} \stcplx^j(v) = \lim_{n\rightarrow\infty} \frac{m^j(\St v)}{\# \St v} = \binom{d}{j}.
\]
\end{proof}

\begin{corollary}\label{cor:complexity}
    For a fixed dimension $h$, the Algorithm~\ref{algo:injective_resolution_tail} computes the minimal injective resolution of the constant sheaf on an $h$-dimensional simplicial complex $\Sigma$ in time $\mathcal{O}(n\cdot s^3)$, where $n$ is the cardinality of $\Sigma$ (as an abstract simplicial complex), and $s$ is the cardinality of the largest star in $\Sigma$.
\end{corollary}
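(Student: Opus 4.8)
The plan is to combine the output-sensitive running time $\mathcal{O}(h\cdot n\cdot c^3)$ derived in the complexity analysis above with the dimension-dependent bound on star complexity from Proposition~\ref{prop:upper_bound_on_SC}, and then observe that once $h$ is fixed all $h$-dependent factors are constants.

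First I would recall the bookkeeping from the complexity analysis. By Corollary~\ref{cor:length_of_resolution} the minimal injective resolution of $k_\Sigma$ has at most $h+1$ nonzero terms, so Algorithm~\ref{algo:injective_resolution_tail} is invoked at most $h+1$ times, and within each invocation the procedure \makeexact\ is called once for each of the $n$ elements of $\Sigma$. Each such call costs $\mathcal{O}(c^3)$, where $c=\max_{j,\sigma} m^j(\St\sigma)$ is the largest number of generators lying over any star at any stage of the resolution (we ignore, as before, the cheaper cost of locating $\St\sigma$ and slicing out the relevant submatrices). Hence the total running time is $\mathcal{O}(h\cdot n\cdot c^3)$.

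Next I would bound $c$ using Proposition~\ref{prop:upper_bound_on_SC}. For any $\sigma\in\Sigma$, writing $d=\dim\St\sigma-\dim\sigma$, we have $d\le h$ and
\[
m^j(\St\sigma)=\stcplx^j(\sigma)\cdot\#\St\sigma\le\binom{d}{j}\cdot\#\St\sigma\le 2^h\cdot s,
\]
since $\binom{d}{j}\le\sum_{i}\binom{d}{i}=2^d\le 2^h$ and $\#\St\sigma\le s$. Taking the maximum over $j$ and $\sigma$ gives $c\le 2^h s$. Therefore, with $h$ fixed (so that $h$ and $2^h$ are constants), the running time is $\mathcal{O}(h\cdot n\cdot c^3)=\mathcal{O}\bigl(n\cdot(2^h s)^3\bigr)=\mathcal{O}(n\cdot s^3)$, as claimed.

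\textbf{Main obstacle.} There is no substantial obstacle: the nontrivial content is already contained in Proposition~\ref{prop:upper_bound_on_SC}. The only points requiring care are (i) correctly aggregating the per-call cost $\mathcal{O}(c^3)$ over all $n$ poset elements and all $h+1$ resolution steps, and (ii) absorbing the factor $2^{3h}$ into the constant hidden by the big-$\mathcal{O}$, which is legitimate precisely because the corollary fixes $h$.
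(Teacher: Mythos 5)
Your proposal is correct and matches the paper's intended argument: the paper states Corollary~\ref{cor:complexity} without a separate proof because, as you observe, it follows immediately by combining the output-sensitive bound $\mathcal{O}(h\cdot n\cdot c^3)$ from the complexity analysis with the inequality $\stcplx^j(\sigma)\le\binom{d}{j}$ of Proposition~\ref{prop:upper_bound_on_SC}, which for fixed $h$ gives $c\le 2^h s = \mathcal{O}(s)$. Your bookkeeping and the absorption of the $h$-dependent constants into the big-$\mathcal{O}$ are both exactly what the paper intends.
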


\section{Derived Functors}
\label{sec:derived-functors}

A fundamental application of derived sheaf theory is the study of continuous maps. To illustrate these applications, we will recall several functors between categories of sheaves which are induced by continuous maps between topological spaces. In Section \ref{sec:derived-functors:definitions} we define the functors necessary to state the main results of this paper (in Section \ref{sec:MorseTheory}). In Section \ref{sec:algorithms_for_computing_derived_functors} we give detailed algorithms for computing these functors, using the terminology and notation of Section \ref{sec:labeled_matrices}.

We define pushforward and pullback for any order preserving map on posets, and proper pushforward and proper pullback for an inclusion of a \emph{locally closed} subset of a poset.

\begin{definition}\label{def:locally-closed}
    A subset $Z$ of a poset $\Pi$ is \emph{locally closed} if it is an intersection of a closed subset (a down set) and an open set (an upper set). Equivalently, $Z$ is convex: if $a,c\in Z$ and $a\leq b\leq c$, then $b\in Z$.
\end{definition}

\subsection{Definitions of Derived Functors}\label{sec:derived-functors:definitions}

\subsubsection{Pushforward}
\begin{definition}
Suppose $f:\Pi\rightarrow \Lambda $ is an order preserving map of posets, and $F$ is a sheaf on $\Pi$. Then the \emph{pushforward} of $F$ by $f$ is defined by
\begin{align*}
f_\ast F (\lambda)&\coloneqq \varprojlim_{\pi\in f^{-1}(\St\lambda)} F(\pi)\\
&= \bigg\{v\in \bigoplus_{\pi\in f^{-1}(\St\lambda)}F(\pi):\\
& \hspace{.5in} F(\gamma\le \tau)\left(\Proj_{F(\gamma)} (v)\right)=\Proj_{F(\tau)}(v)\text{ for all }(\gamma\le\tau)\in f^{-1}(\St\lambda)\bigg\}, 
\end{align*}
where $\Proj_{F(\gamma)}:\bigoplus_{\pi\in f^{-1}(\St\lambda)}F(\pi)\rightarrow F(\gamma)$ denotes projection onto $F(\gamma)$, and the linear maps $f_\ast F (\kappa\le \lambda)$ are restrictions of the projections $\bigoplus_{\pi\in f^{-1}(\St\kappa)}F(\pi) \rightarrow \bigoplus_{\pi\in f^{-1}(\St\lambda)}F(\pi)$. 

The pushforward is functorial; that is, if $\eta: F\rightarrow G$ is a natural transformation between sheaves $F$ and $G$ on $\Pi$, then
\[
f_\ast \eta: f_\ast F\rightarrow f_\ast G,
\]
is a natural transformation, where $f_\ast\eta(\lambda)$ is obtained by restricting the domain of the sum of linear maps 
\[\sum_{\pi\in f^{-1}(\St\lambda)} \eta(\pi):\bigoplus_{\pi\in f^{-1}(\St\lambda)}F(\pi)\rightarrow \bigoplus_{\pi\in f^{-1}(\St\lambda)}G(\pi).\]
\end{definition}
Notice that if $\eta:F\rightarrow G$ is injective, then $f_\ast \eta$ is injective. However, the same is not necessarily true for surjectivity. In the language of category theory, $f_\ast $ is a left exact functor. It is useful to observe that in our finite setting, pushforwards of injective complexes have a straightforward structure.

\begin{lemma}\label{lem:pushforward_of_injective_sheaf}
    If $I\cong \bigoplus_{0\leq j\leq n} [\pi_j]$ is an injective sheaf on $\Pi$, then $f_\ast I$ is an injective sheaf on~$\Lambda$ given by \[f_\ast I \cong \bigoplus_{0\leq j\leq n} [f(\pi_j)].\]
\end{lemma}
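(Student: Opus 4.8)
The plan is to reduce to the case of a single indecomposable injective and then compute the pushforward directly from the definition. First, since $f_\ast$ is a functor that preserves direct sums (the inverse image $f^{-1}(\St\lambda)$ and the limit construction are compatible with taking direct sums of sheaves), it suffices to show that $f_\ast[\pi] \cong [f(\pi)]$ for a single $\pi\in\Pi$. So fix $\pi$ and set $\mu \coloneqq f(\pi)$; I must identify $f_\ast[\pi]$ with $[\mu]$.

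Next I would evaluate $f_\ast[\pi](\lambda) = \varprojlim_{\gamma\in f^{-1}(\St\lambda)}[\pi](\gamma)$ for each $\lambda\in\Lambda$. Recall $[\pi](\gamma) = k$ if $\gamma\le\pi$ and $0$ otherwise, with identity maps between the nonzero stalks. The key observation is that the full subposet $\{\gamma\in f^{-1}(\St\lambda) : \gamma\le\pi\}$ is either empty or has $\pi$ as its greatest element: it is nonempty exactly when $\pi\in f^{-1}(\St\lambda)$, i.e. when $f(\pi)\ge\lambda$, i.e. when $\lambda\le\mu$ (using that $f$ is order preserving, $\lambda\le f(\pi)$ is equivalent to $\pi\in f^{-1}(\St\lambda)$). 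When $\lambda\le\mu$, the diagram defining the limit is a diagram of copies of $k$ and $0$'s with identity maps among the $k$'s, all of which map compatibly into the terminal vertex $[\pi](\pi)=k$; the limit of such a diagram is $k$. When $\lambda\not\le\mu$, every term $[\pi](\gamma)$ in the diagram is $0$ (since no $\gamma\in f^{-1}(\St\lambda)$ satisfies $\gamma\le\pi$), so the limit is $0$. Hence $f_\ast[\pi](\lambda) = k$ if $\lambda\le\mu$ and $0$ otherwise, matching $[\mu]$ on objects.

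Then I would check the restriction maps. For $\kappa\le\lambda$ in $\Lambda$, the map $f_\ast[\pi](\kappa\le\lambda)$ is induced by the projection $\bigoplus_{f^{-1}(\St\kappa)}[\pi](\gamma)\to\bigoplus_{f^{-1}(\St\lambda)}[\pi](\gamma)$ restricted to the limits. If $\lambda\le\mu$ (hence also $\kappa\le\mu$), both limits are canonically $k$ via the component at $\pi$ (which lies in both $f^{-1}(\St\kappa)$ and $f^{-1}(\St\lambda)$), and the projection is the identity on that component, so the induced map is $\id_k$; otherwise at least the codomain is $0$ and the map is $0$. This is exactly the structure map of $[\mu]$, so $f_\ast[\pi]\cong[\mu] = [f(\pi)]$. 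Combining with additivity of $f_\ast$ gives $f_\ast I \cong \bigoplus_{0\le j\le n}[f(\pi_j)]$, and this is injective by Lemma \ref{lem:elementary_injective_sheaf} together with Lemma \ref{lem:coker-injective} (a direct sum of indecomposable injectives is injective).

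The main obstacle I anticipate is purely bookkeeping: making the identification of the limit over $f^{-1}(\St\lambda)$ with the stalk at the greatest relevant element fully rigorous, in particular verifying that the compatibility (commutativity) conditions in the limit do not cut down the one-dimensional answer and that the natural identifications at different $\lambda$ are compatible with the projection maps. There is no deep difficulty here — the indecomposable injective $[\pi]$ is essentially a "constant sheaf on a down-set," and its pushforward is the constant sheaf on the image down-set — but one must be slightly careful that $f$ being only order preserving (not, say, injective) does not cause the preimage diagram to misbehave; the argument above only uses order preservation, so this is fine.
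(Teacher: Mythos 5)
Your proof is correct and follows the same approach as the paper: reduce by additivity to the indecomposable case $f_\ast[\pi]$, then compute the inverse limit directly from the definition using that $\pi \in f^{-1}(\St\lambda) \iff \lambda \le f(\pi)$. The paper's proof is terser (it states the resulting stalk formula without spelling out the limit argument or the restriction maps), so your version fills in exactly the bookkeeping the paper leaves implicit.
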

\begin{proof}
    Because $f_\ast\left(\bigoplus_j F_j \right) = \bigoplus_j f_\ast F_j$, it is enough to observe that $f_\ast[\pi] \cong [f(\pi)]$ for any $\pi\in\Pi$. Indeed, by definition \[
        f_\ast[\pi](\sigma) = \begin{cases}
            k \text{ if $\pi\in f^{-1}(\St\sigma)$} \\
            0 \text{ otherwise,}
        \end{cases}
    \]
    and $\pi\in f^{-1}(\St\sigma)$ if and only if $f(\pi)\in \St\sigma$.
\end{proof}

\subsubsection{Pullback}\label{sec:pullback}

\begin{definition}\label{def:pullback}
Suppose $f:\Pi\rightarrow \Lambda $ is an order preserving map of posets, and $F$ is a sheaf on $\Lambda$. Then the \emph{pullback} of $F$ by $f$ is defined by
\begin{align*}
f^\ast F (\pi)&\coloneqq F(f(\pi)), 
\end{align*}
with linear maps $f^\ast F (\pi\le \tau) = F(f(\pi)\le f(\tau))$.

As with the pushforward, the pullback is functorial: if $\eta: F\rightarrow G$ is a natural transformation between sheaves $F$ and $G$ on $\Lambda$, then $\eta$ defines a natural transformation 
\[
f^\ast \eta: f^\ast F\rightarrow f^\ast G,
\]
where $f^\ast\eta(\pi)\coloneqq \eta(f(\pi))$. 
\end{definition}
Unlike the pushforward, the pullback preserves both the injectivity and surjectivity of natural transformations, and defines an exact functor. However, (again in contrast to the pushforward) the pullback does not necessarily preserve injectivity of sheaves. 
\begin{remark}
A classic result in sheaf theory, which continues to hold in the setting of finite posets, is that the functors $f^\ast$ and $f_\ast$ form an adjoint pair:
\[
\Hom_{\Pi}(f^\ast F, G) \cong \Hom_{\Lambda}(F,f_\ast G).
\]
See \cite[Theorem 3.14]{Curry2018} for a detailed proof and \cite[Theorem 3.4.7]{Shepard1985} for an analogous theorem of the adjointness of the corresponding derived functors. 
\end{remark}
\begin{example}\label{ex:pullback_of_injective_not_injective}
Let $\Pi=(a\leq b,c)$, $\Lambda=(a'\leq b',c'\leq d')$ and $f:\Pi\rightarrow\Lambda$ sends $a,b,c$ to $a',b',c'$, respectively. Then the constant sheaf on $\Lambda$, is injective: $k_\Lambda=[d']$. By the definition, its pullback is $f^\ast k_\Lambda = k_\Pi$, which is not an injective sheaf on $\Pi$.

\begin{minipage}{.95\textwidth}
    \centering
    \vspace{10pt}
    \begin{tikzcd}[column sep = 7pt]
        & \phantom{k} & \\[-10pt]
        k & & k \\[-10pt]
        & k \ar{lu} \ar{ru} &
    \end{tikzcd}%
    \hspace{15pt}$\xleftarrow{\qquad f^\ast \qquad}$\hspace{15pt}
    \begin{tikzcd}[column sep = 7pt]
        & k & \\[-10pt]
        k \ar{ru} & & k \ar{lu} \\[-10pt]
        & k \ar{lu} \ar{ru} &
    \end{tikzcd}%
    \vspace{10pt}
\end{minipage}
\end{example}

In Section~\ref{sec:algorithms_for_computing_derived_functors} we describe a pushforward of injective sheaves as a restriction of an injective sheaf over a modified poset.

\subsubsection{Proper Pushforward}

Now we define the proper pushforward. Unlike for the previous functors, here we assume $f\coloneqq i_Z$ to be an inclusion of a locally closed subset $Z$.

\begin{definition}
For a locally closed subset, $Z$, of a poset $\Pi$, and a sheaf $F$ on $Z$, the \emph{proper pushforward} of $F$ by the inclusion $i:Z\hookrightarrow \Pi$ is
\[
i_!F (\sigma )\coloneqq\begin{cases}
F(\sigma)\text{ if }\sigma\in Z,\\
0\text{ else.} 
\end{cases}
\]
As above, it is straightforward to show that $i_!$ is functorial, and that $i_!(\eta)$ preserves injectivity and surjectivity of $\eta$. In other words, $i_!$ is exact.
\end{definition}

Unless $Z$ is closed, a proper pushforward, $(i_Z)_! I$, of an injective sheaf is not injective, because there will necessarily exist two poset elements $a \le b$ with $(i_Z)_! I (a) = 0$ and $(i_Z)_! I (b) \neq 0$. However, it is a subsheaf of an injective sheaf. Indeed, if $I\cong \bigoplus_{0\leq j\leq n} [\pi_j]$ is an injective sheaf on $Z$, and $\hat I\coloneqq (i_Z)_\ast I \cong \bigoplus_{0\leq j\leq n} [\pi_j]$ is an injective sheaf on $\Pi$, then $(i_Z)_! I (\pi) = \hat I (\pi)$ for every $\pi\in Z$.

\subsubsection{Proper Pullback}\label{sec:proper_pullback}
We define the proper pullback, again only for inclusions of locally closed subsets.

\begin{definition}
For a locally closed subset, $Z$, of a poset $\Pi$, and a sheaf $F$ on $Z$, the \emph{proper pullback} of $F$ by the inclusion $i:Z\hookrightarrow\Pi$ is defined by
\[
i^!F(\zeta) := \bigg\{v\in F(\zeta) : F(\zeta\le\tau)(v) = 0\text{ for all }\tau\in \St_\Pi \zeta-Z\bigg\}, 
\]
where $\zeta\in Z$ and $\St_\Pi \zeta$ is the star of $\zeta$ in $\Pi$. 
\end{definition}

As above, it is straightforward to show that $i^!$ is functorial, and that $i^!(\eta)$ preserves the injectivity of $\eta$ but not necessarily surjectivity. As the following lemma shows, computing proper pullbacks of injective sheaves is straightforward. 

\begin{lemma}\label{lem:proper_pullback_of_injective_sheaf}
Let $Z$ be a locally closed subset of $\Pi$. If $I\cong \bigoplus_{\pi\in\Pi} [\pi]^{p_\pi}$ is an injective sheaf on $\Pi$, then $i_Z^! I$ is an injective sheaf on $Z$ with a decomposition \[
    i_Z^! I \cong \bigoplus_{\pi\in Z} [\pi]^{p_\pi}.
\] 
Moreover, if $I^\bullet$ is a minimal injective complex, then so is $i_Z^! I^\bullet$.
\end{lemma}
\begin{proof}
The functor $i^!_Z$ is additive, so it is enough to consider $i^!_Z[\pi]$ for an indecomposable injective sheaf~$[\pi]$.

Assume first that $\pi\notin Z$. Let $\zeta\in Z$. Either $\zeta\le\pi$, and then $i^!_Z[\pi](\zeta) = 0$, because $[\pi](\zeta\le \pi) = \id_k$; or $\zeta\not\le\pi$, and then also $i^!_Z[\pi](\zeta) = 0$, because $[\pi](\zeta)=0$. Therefore, $i^!_Z[\pi]= 0$ for $\pi\notin Z$.

Now we consider the case when $\pi\in Z$. If $\zeta\in Z$ with $\zeta\not\le \pi$, then $i^!_Z[\pi](\zeta) = 0$ because $[\pi](\zeta)=0$. If $\zeta\le \pi$, then the intersection 
\[
\{\gamma\in\Pi:\gamma \le \pi\}\cap \St_\Pi \zeta
\]
is a subset of $Z$ (because $Z$ is locally closed). Therefore, if $\tau\in \St_\Pi\zeta - Z$ then $\tau\not\le \pi$, and hence $[\pi](\zeta\leq\tau)=0$. It follows that $i^!_Z[\pi](\zeta) = k$. Altogether, we have shown that $i^!_Z([\pi]_\Pi)$ is isomorphic to $[\pi]_Z$ (we add the subscripts $\Pi$ and $Z$ to emphasize that $[\pi]_\Pi$ is a sheaf on $\Pi$ and $[\pi]_Z$ is a sheaf on $Z$). 

The minimality of $i_Z^! I^\bullet$, provided that $I^\bullet$ is minimal, then follows from condition~(\ref{thm:minimal_injective_complex_maximal_vectors}) of Theorem~\ref{thm:minimal_injective_complex}: for a natural transformation $\eta^d:I^d\rightarrow I^{d+1}$ and $\pi\in Z$, the map $i_Z^! \eta^d(\pi)$ is just a restriction of $\eta^d(\pi)$. The maximal vectors in $i_Z^! I (\pi)$ are the same as maximal vectors in $I (\pi)$, so if they are sent to zero by $\eta^d$, they are also sent to zero by $i_Z^! \eta^d(\pi)$, verifying the condition~(\ref{thm:minimal_injective_complex_maximal_vectors}).
\end{proof}

Combining this with Lemma~\ref{lem:pushforward_of_injective_sheaf}, we see that $(i_Z)_\ast i_Z^! I$ is the subsheaf of $I$ containing only the indecomposable injective summands $[\pi]$ on $\Pi$ for $\pi \in Z$.

\subsubsection{Summary}\label{sec:derived-functors:summary}

For a quick reference, we provide a summary of how the four introduced functors, $(i_Z)_\ast$, $(i_Z)_!$, $(i_Z)^\ast$, $(i_Z)^!$, act on injective sheaves in case of an inclusion $i_Z : Z\hookrightarrow\Pi$. The cones in the schema represent indecomposable injectives $[\pi]$.

\noindent\begin{minipage}{\textwidth}
    \centering
    \vspace{3mm}
    \includegraphics[width=65mm]{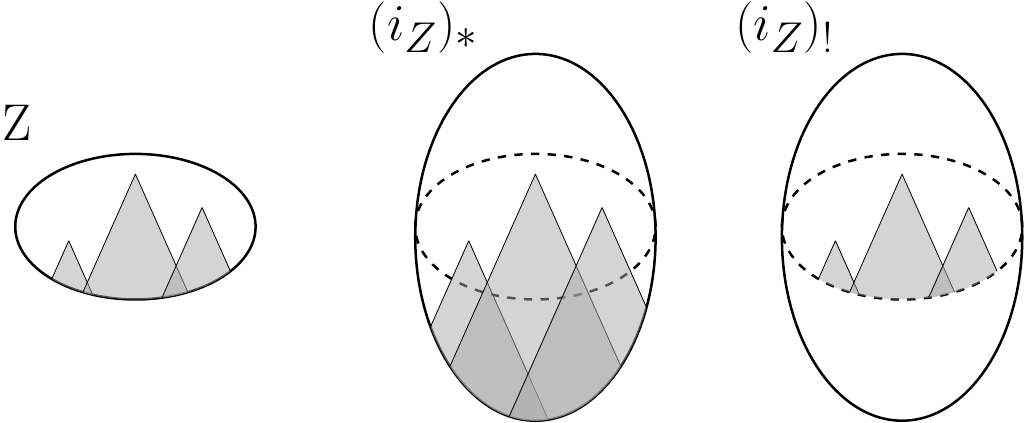}
    \hfill
    \includegraphics[width=65mm]{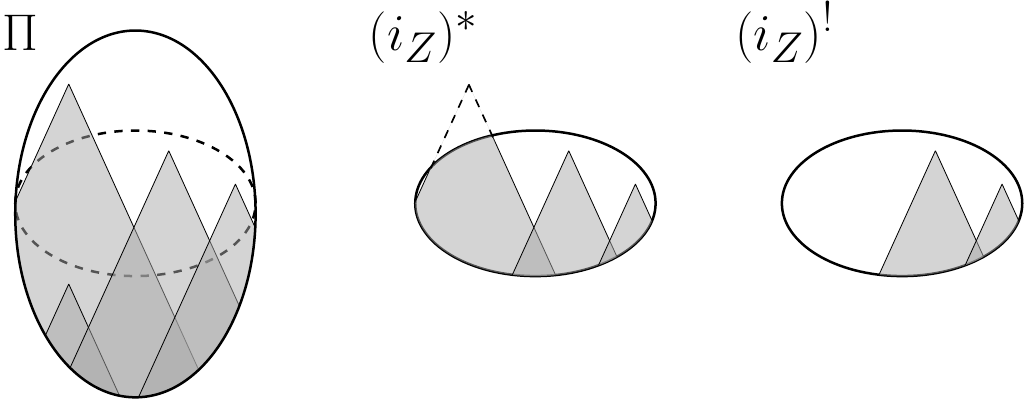}
    \vspace{3mm}
\end{minipage}

We see that while $(i_Z)_\ast$ and $(i_Z)^!$ are built of full indecomposable injectives, this is not the case for the other two---$(i_Z)^\ast$ contains parts of indecomposable injectives clipped from above, and $(i_Z)_!$ contains parts clipped from below. In Section~\ref{sec:algorithms_for_computing_derived_functors} we discuss how to compute injective resolutions of images of injective complexes. While for $(i_Z)_\ast$ and $(i_Z)^!$ we already have injective complexes, we need to carry out extra computations in case of $(i_Z)^\ast$ and $(i_Z)_!$, which we describe as Algorithm~\ref{algo:pullback_of_complex_of_injectives} and Algorithm~\ref{algo:proper_pushforward_of_complex_of_injectives}, respectively.

\subsection{Derived Functors and Hypercohomology} \label{sec:hypercohomology}
We now apply the above operations to complexes of injective sheaves. 

\begin{definition}\label{def:derived-functor}
Let $f:\Sigma\rightarrow\Lambda$ be an order preserving map of posets and $(I^\bullet,\eta^\bullet)\in D^b(\Sigma)$ be a minimal injective complex on $\Sigma$. Let $\varphi$ be one of the functors $f_\ast$, $f^\ast$, $f_!$, or $f^!$, defined above (we assume $f$ is an inclusion in the proper cases). Define $\Rfunc\varphi (I^\bullet)\in D^b(\Lambda)$ to be the minimal injective resolution of the complex 
\[
\cdots\rightarrow \varphi(I^d)\xrightarrow{\varphi\eta^d}\varphi(I^{d+1})\xrightarrow{\varphi\eta^{d+1}}\varphi(I^{d+2})\rightarrow \cdots.
\]
As with other functors, we usually write $\Rfunc\varphi I^\bullet=\Rfunc\varphi(I^\bullet)$ for brevity.
\end{definition} 

\begin{remark}
    For a functor $\varphi$ on sheaves, the above defined functor $\Rfunc\varphi$ plays the role of the right derived functor $R\varphi$ from the classical theory of derived categories, and we therefore refer to $\Rfunc\varphi$ as the \emph{derived functor} of $\varphi$.
\end{remark}
\begin{remark}
    While most authors would not use $R\varphi$ for an exact functor $\varphi$, we still write $\Rfunc\varphi$ to ensure that the result is a minimal injective chain.
    For example, since $f^\ast$ is an exact functor it naturally extends to a functor on the derived category, and all of the higher cohomology sheaves vanish for the right derived functor $Rf^\ast$. Most authors then use $f^\ast$ to denote the functor on the category of sheaves as well as on the derived category of sheaves. However, we note that while $f^\ast$ is exact, it does not preserve the injectivity of sheaves, and therefore does not immediately define a functor on (the chosen skeleton of) the derived category of sheaves as we have defined it. To define such a functor, we must post-compose $f^\ast$ with a minimal injective resolution.
    Therefore, we write $\Rfunc f^\ast$ and highlight the algorithmic difference from simply applying $f^\ast$ to a complex of sheaves. The same remarks apply for $f_!$.
\end{remark}
\begin{remark}
    Secondly, in the most classical settings (such as when studying smooth maps between manifolds), $f^!$ does not exist as a functor on the category of sheaves, but only as a functor on the derived category (this is indeed one of the motivations for defining the derived category). In these settings, $f^!$ is often introduced as the left adjoint of $Rf_!$. However, for simplicity, when discussing $f^!$ in this paper we restrict our attention to only the poset maps $f:Z\hookrightarrow \Pi$ which are inclusions of locally closed subposets. In this restricted setting, the functors $f^!$ are well-defined as functors on the category of sheaves, and justify the additional notation $R f^!$ for the corresponding right derived functor on the derived category of sheaves, and our corresponding functor $\Rfunc f^!$. It would be interesting and worthwhile to extend our results to more general poset maps, and to give an algorithm for computing $f^!$ in a more general setting when it is classically defined as the left adjoint of $Rf_!$.
\end{remark}

\begin{definition}\label{def:hypercohomology}
Let $p:\Sigma \rightarrow \{\star\}$ be the projection map to the single point poset. Let $I^\bullet$ be a complex of injective sheaves on $\Sigma$. Define the \emph{hypercohomology} of $I^\bullet$ to be
\[
\mathbb{H}^d( I^\bullet) \coloneqq H^d(\Rfunc p_\ast I^\bullet).
\]
Notice that $\Rfunc p_\ast(I^\bullet)$ is simply a complex of vector spaces. As such, the minimality condition forces all maps to be zero maps, which implies that $H^d(\Rfunc p_\ast I^\bullet) = (\Rfunc p_\ast I^\bullet)^d$ when we view the right-hand side as the vector space over the single element.
\end{definition}

\begin{corollary}\label{cor:proper-pull-back-multiplicity}
Assume $I^\bullet\in D^b(\Sigma)$ is a minimal injective complex. Let $m_{I^\bullet}^d(\pi)$ be the multiplicity of the indecomposable summand $[\pi]$ in $I^d$. If $i_\pi:\{\pi\}\hookrightarrow \Sigma$, then
\[
\dim \mathbb{H}^d\Rfunc i_\pi^! I^\bullet = m_{I^\bullet}^d(\pi).
\]
\end{corollary}
\begin{proof}
This follows directly from Lemma \ref{lem:proper_pullback_of_injective_sheaf}: we have $(\Rfunc i_\pi^! I^\bullet)^d = i_\pi^! I^d \cong [\pi]^{m_{I^\bullet}^d(\pi)}$, and since all vectors on a singleton are maximal, all the complex maps are zero by the minimality and condition~(\ref{thm:minimal_injective_complex_maximal_vectors}) of Theorem~\ref{thm:minimal_injective_complex}.
\end{proof}

In the setting of simplicial complexes (or, more generally, cellular complexes) the cohomology sheaves of the derived pushforward of a constant sheaf are closely related to singular cohomology groups of level-sets. This allows us to give some intuition behind derived pushforwards with the proposition below. Before formulating it, we recall two classical notions of cohomology  for topological spaces (see, e.g., \cite[2.6.8]{KashiwaraSchapira1994}).

\begin{definition}\label{def:cohomology_functorial}
     Let $X$ be a topological space, $p:\Pi\rightarrow \{\mathrm{pt}\}$ a constant map to a point, and $F$ a sheaf on $X$ with injective resolution $I^\bullet$. Then
     \begin{align*}
        H^d(X;F) &\coloneqq H^d(\Rfunc p_\ast F) = H^d(\Rfunc p_\ast I^\bullet) , \\
        H_c^d(X;F) &\coloneqq H^d(\Rfunc p_! F) = H^d(\Rfunc p_! I^\bullet).
     \end{align*}
     By $H^d(X;k)$ and $H_c^d(X;k)$ we denote the above with the choice $F=k_X$, the constant sheaf on~$X$. Those definitions yield the standard and compactly supported cohomology (see, e.g., \cite{hatcher}), respectively.
\end{definition}

We use these notions mainly for geometric realizations of simplicial complexes and their subsets. Note that we cannot just blindly replace $X$ by a poset---for one, we did not give a definition for $p_!$, and, more importantly, if $\{v\}$ is a vertex and $\{e\}$ an open edge, then they are equal as posets, but $H_c^1(|\{v\}|;k)=0$ while $H_c^1(|\{e\}|;k)\cong k$. We can, however, replace $X$ by a face poset of a simplicial complex in $H^d(X; F)$.

\begin{proposition}\label{prop:persistent-homology}
Suppose $f:\Sigma\rightarrow \Lambda$ is a simplicial map. As sheaves on $\Lambda$, 
\begin{align*}
    H^d\Rfunc f_\ast k_\Sigma  &\cong H^d(|f^{-1}(\St\blank)|;k)
\end{align*}
where $H^d(|f^{-1}(\St\blank)|;k)$ is the sheaf defined by associating the simplex $\lambda$ to the singular cohomology of the geometric realization of $f^{-1}(\St\lambda)$ (with linear maps induced by inclusion). 
\end{proposition}
\begin{proof}
We provide a proof at the end of \ref{app:injective_resolution_order_complex}.

\end{proof}

\subsection{Recollement for Posets}
Suppose $\Pi=C\sqcup U$ is a partition of the finite poset $\Pi$ into a closed set $C$ and an open set $U$ (relative to the Alexandrov topology). Let $i_C:C\hookrightarrow \Pi$, $j_U:U\hookrightarrow \Pi$, and $p,q,r$ denote projections to the single element poset $\{\star\}$:
\begin{center}
\begin{tikzcd}
C \ar{r}{i_C}\ar[swap]{dr}{p}
& \Pi\ar{d}{q}& U\ar[swap]{l}{j_U}\ar{dl}{r} \\
 &\{\star\}&
\end{tikzcd}
\end{center}
\begin{lemma}\label{lemma:exact-sequence}
If $I^\bullet\in D^b(\Pi)$, then there are long exact sequences of hypercohomology groups (recalling Definitions \ref{def:derived-functor} and \ref{def:hypercohomology})
\begin{align}
\cdots\rightarrow& \mathbb{H}^d(\Rfunc i_C^! I^\bullet)\rightarrow \mathbb{H}^d( I^\bullet)\rightarrow
\mathbb{H}^d(\Rfunc j_U^\ast I^\bullet)\\
&\rightarrow \mathbb{H}^{d+1}(\Rfunc i_C^!I^\bullet)\rightarrow\cdots\nonumber
\end{align}
and 
\begin{align}
\cdots\rightarrow& \mathbb{H}^d(\Rfunc j_{U!}\Rfunc j_U^\ast I^\bullet)\rightarrow \mathbb{H}^d( I^\bullet)\rightarrow
\mathbb{H}^d(\Rfunc i_C^\ast I^\bullet)\\
&\rightarrow \mathbb{H}^{d+1}(\Rfunc j_{U!}\Rfunc j_U^\ast I^\bullet)\rightarrow\cdots\nonumber
\end{align}
\end{lemma}
\begin{proof}
For any injective sheaf $I$ on $\Pi$, there are short exact sequence of sheaves: 
\[
0\rightarrow i_{C\ast}i_{C}^! I\rightarrow I\rightarrow j_{U\ast}j_U^\ast I\rightarrow 0, 
\]
and
\[
0\rightarrow j_{U!}j^\ast_U I\rightarrow I\rightarrow i_{C\ast}i^\ast_C I\rightarrow 0. 
\]
These exact sequences define distinguished triangles in $D^b(\Pi)$, which then yield the desired long exact sequence of hypercohomology (see \cite[Propositions 1.7.5 and 1.8.8, Remark 2.6.10]{KashiwaraSchapira1994}). 
\end{proof}

\subsection{Algorithms for Computing Derived Functors}\label{sec:algorithms_for_computing_derived_functors}

In this section we describe how to compute minimal injective resolutions, $\Rfunc f_\ast$, $\Rfunc f^\ast$, $\Rfunc f_!$, $\Rfunc f^!$, of the introduced functors, $f_\ast$, $f^\ast$, $f_!$, $f^!$, applied to injective complexes. We first introduce a notion of a poset mapping cylinder, and then describe algorithms to compute each of the four functors. Concrete examples of the computations are presented in Section~\ref{sec:examples}. We assume $f:\Pi\rightarrow\Lambda$ is an order preserving map of posets and $(I^\bullet,\eta^\bullet)$ is a complex of injective sheaves represented by labeled matrices $\eta^\bullet$. We will assume each $I^\bullet$ to start in degree $0$. If it is not the case, we can shift the degrees in the obvious manner---if $n$ is the smallest degree with non-zero $I^n$, we consider new matrices $\hat\eta^d = \eta^{d+n}$. Note that this means that we start with a zero matrix $\eta^{-1}$ with rows labeled according to a decomposition of $I^0$, and no columns, as discussed after Definition~\ref{def:complex_of_labeled_matrices}.

For the discussions we also fix $I=\bigoplus_i[\pi_i]$, $J=\bigoplus_j[\sigma_j]$ and $\eta:I\rightarrow J$ a natural transformation represented by a labeled matrix $\eta$.

\subsubsection{Poset Mapping Cylinder of an Order Preserving Map}\label{sec:poset_mapping_cylinder}

The strength of our approach is that we can compute all the introduced derived functors using labeled matrices, despite the fact that pullback and proper pushforward do not preserve injectivity. For that purpose, we use poset mapping cylinders, and consider injective complexes on those. The construction is also known as non-Hausdorff mapping cylinder in the study of finite $T_0$ topological spaces, see, e.g., \cite[Definition~2.8.1]{BarmakBook} or before in \cite{BarmakMinian2008}.

\begin{definition}\label{def:mapping_cylinder_for_posts}
    Given an order preserving map between two posets, $f:\Pi\rightarrow\Lambda$, its \emph{mapping cylinder}, $\Pi\sqcup_f\Lambda$, is the poset with the underlying set the disjoint union $\Pi\sqcup\Lambda$, and the order defined as follows: $\pi\leq \tau$ in $\Pi\sqcup_f\Lambda$ if and only if one of the following conditions is satisfied 
    \begin{itemize}
        \item $\pi,\tau\in\Pi$ and $\pi\leq \tau$ in $\Pi$,
        \item $\pi,\tau\in\Lambda$ and $\pi\leq \tau$ in $\Lambda$, or
        \item $\pi\in\Pi$, $\tau\in\Lambda$ and $f(\pi)\leq \tau$ in $\Lambda$.
    \end{itemize}
    We also call it a \emph{poset cylinder} to more clearly distinguish it from the mapping cone of a complex morphism.
\end{definition}

In other words, the order relation in $\Pi\sqcup_f\Lambda$ is generated by new relations $\pi\leq f(\pi)$ for every $\pi\in\Pi$ together with the order relations on $\Pi$ and $\Lambda$. Note that every chain in $\Pi\sqcup_f\Lambda$ decomposes into a prefix of elements from $\Pi$ and suffix of elements from $\Lambda$ (either possibly empty). The following is an important property of this construction.

\begin{lemma}\label{lem:star_in_mapping_cylinder}
    For $\pi\in\Pi$, the star of $\pi$ in $\Pi\sqcup_f\Lambda$ is the union of stars of $\pi$ in $\Pi$ and $f(\pi)$ in $\Lambda$: \[ \St_{\Pi\sqcup_f\Lambda}\pi =  \St_{\Pi}\pi \cup \St_{\Lambda} f(\pi).
    \]
\end{lemma}
\begin{proof}
The inclusion ``$\supseteq$'' is immediate since $\pi\leq f(\pi)$ in $\Pi\cup_f\Lambda$. For ``$\subseteq$'', let $\pi\leq\tau$ in $\Pi\cup_f\Lambda$. Then by definition either $\tau\in\Pi$ and $\tau\in\St_{\Pi}\pi$; or $\tau\in\Lambda$ and then $f(\pi)\leq \tau$ in $\Lambda$, i.e., $\tau\in\St_{\Lambda} f(\pi)$.
\end{proof}

In particular, the lemma implies that if $\eta$ is a labeled matrix representing a natural transformation between two injective sheaves on $\Pi\sqcup_f \Lambda$, and if all its labels are in $\Lambda$, then $\eta(\pi)=\eta(f(\pi))$ for all $\pi\in\Pi$.

\subsubsection{Computing Pushforward}\label{sec:pushforward_algo}
For labeled matrices, computing pushforwards is very simple. By Lemma~\ref{lem:pushforward_of_injective_sheaf}, we only need to replace every label $\pi$ by $f(\pi)$.
This yields a labeled matrix representation of an injective complex, but it might not be minimal. Indeed, when $f$ is not injective, $f_\ast\eta$ might send some maximal vectors to non-zero vectors even if $\eta$ did not. Therefore, we compute $\Rfunc f_\ast I^\bullet$ by relabeling and then minimizing.
\begin{proposition}\label{lem:computation_pushforward}
    Given an injective complex $(I^\bullet, \eta^\bullet)$ on $\Pi$ represented by labeled matrices, and an order preserving map $f: \Pi\rightarrow \Lambda$, we obtain $\Rfunc f_\ast I^\bullet$ by performing two steps:
\begin{enumerate}
    \item change all labels $\pi$ to $f(\pi)$ in all matrices $\eta^d$,
    \item minimize the complex as described in Section~\ref{sec:derived_categories:algorithm_minimize}.
\end{enumerate}
We obtain $\Rfunc f_\ast I^\bullet$ regardless of whether $I^\bullet$ is minimal to start with. 
\end{proposition}

\subsubsection{Computing Pullback}\label{sec:pullback_algo}

The pullback $f^\ast I$ of an injective sheaf $I=\bigoplus_j[\lambda_j]$ on $\Lambda$ might not be injective, but we can describe it as a restriction of an injective sheaf on the poset cylinder $\Pi\sqcup_f\Lambda$ defined above. This injective sheaf is $\hat I\coloneqq \bigoplus_j[\lambda_j]$, where we now use the same symbol $[\lambda_j]$ for two different sheaves: once on $\Lambda$, once on $\Pi\sqcup_f\Lambda$. We claim that $\hat I$ restricted to $\Pi$ is $f^\ast I$.
Indeed, for any $\pi\in\Pi$ we have\[
    f^\ast I(\pi) \overset{Def~\ref{def:pullback}}{=}
    I(f(\pi)) =
    \bigoplus_{\substack{j \\ \lambda_j \in \St_{\Lambda}(f(\pi))}} [\lambda_j](f(\pi)) \overset{Lem~\ref{lem:star_in_mapping_cylinder}}{=}
    \bigoplus_{\substack{j \\ \lambda_j \in \St_{\Pi\sqcup_f\Lambda}(\pi)}} [\lambda_j](\pi) = \hat{I}(\pi).
\]
The same construction works for the pullback of a natural transformation $\eta$. This allows us to describe $f^\ast \eta$ by the same labeled matrices as $\eta$.

\begin{lemma}\label{lem:pullback_cone_representation}
    Given an injective complex $(I^\bullet, \eta^\bullet)$ on $\Lambda$, the pullback $f^\bullet I^\bullet$ is the restriction of $(\hat{I}^\bullet, \hat{\eta}^\bullet)$ from $\Pi\sqcup_f\Lambda$ to $\Pi$. The same complex of labeled matrices represents $I^\bullet$ and $\hat{I}^\bullet$.
\end{lemma} 

We describe how to compute a labeled matrix representation of $\Rfunc f^\ast I^\bullet$, given a labeled matrix representation of $(I^\bullet,\eta^\bullet)$. The key insight is a relation between a complex on the poset cylinder and a mapping cone of a complex morphism.

Consider now a complex $(J^\bullet, \delta^\bullet)$ on $\Pi$---it can also be viewed as a complex on $\Pi\sqcup_f\Lambda$ by setting $J^\bullet(\lambda) = 0$ for all $\lambda\in\Lambda$---and a morphism $\hat{\alpha}^\bullet:\hat{I}^\bullet \rightarrow J^\bullet$. We construct a complex $(C^\bullet, \gamma^\bullet)$ on $\Pi\sqcup_f\Lambda$ by setting\[
    C^d \coloneqq \hat I^{d+1} \oplus J^d,
    \hspace{20mm}
    \gamma^d \coloneqq
    \begin{pmatrix}
        -\hat{\eta}^{d+1} & 0 \\
        \hat{\alpha}^{d+1} & \delta^{d}
    \end{pmatrix}.
\]
Recalling Definition~\ref{def:mapping_cone_comlex} together with Lemma~\ref{lem:pullback_cone_representation} above gives a meaning to $C^\bullet$.

\begin{lemma}\label{lem:poset_cone_and_mapping_cone}
    The restriction of $(C^\bullet, \gamma^\bullet)$ from $\Pi\sqcup_f\Lambda$ to $\Pi$ is the mapping cone of the restriction of $\hat{\alpha}^\bullet$ to $\Pi$, which is a morphism $\alpha^\bullet: f^\ast I^\bullet \rightarrow J^\bullet$.
\end{lemma}

This gives us a recipe to compute an injective resolution of $f^\ast I^\bullet$. We construct $J^\bullet$ such that $C^\bullet$ restricted to $\Pi$ is exact. By Lemma~\ref{lem:mapping_cone_of_quasi-isomorphism}, we then have $\alpha^\bullet$ a quasi-isomorphism, which means that $J^\bullet$ is the resolution we are looking for. The construction itself is analogous to computing injective resolution of a sheaf in Section~\ref{sec:derived_categories:constant_sheaf}. We start with $J^\bullet$ empty, and add indecomposable injective sheaves one by one to ensure exactness at each $\pi\in\Pi$ and for each degree $d$. The procedure is described as Algorithm~\ref{algo:pullback_of_complex_of_injectives}. We remark that on line~\ref{algoline:pullback_of_complex_of_injectives_add_row} we can be adding $r$ instead of $-r$, since complexes given by $\eta^\bullet$ and $-\eta^\bullet$ are quasi-isomorphic---we switch the sign to be consistent with our choice of signs in the definition of the mapping cone.

\begin{algorithm}[htb]
\caption{Computing $\Rfunc  f^\ast I^\bullet$}\label{algo:pullback_of_complex_of_injectives}

\textbf{Inputs:}\vspace{-2mm}
\begin{itemize}
    \item $\eta^{-1},\dots,\eta^n$ labeled matrices representing a complex of injective sheaves $(I^\bullet,\eta^\bullet)$ on $\Lambda$
    \item $f:\Pi\rightarrow\Lambda$ order preserving map, with $\Pi$ and $\Lambda$ disjoint
\end{itemize}
\textbf{Output:} $\delta^{-1},\dots,\delta^m$ labeled matrices representing complex $\Rfunc  f^\ast (I^\bullet)$ on $\Pi$ \\
\textbf{Notation:}\vspace{-2mm}
\begin{itemize}
    \item $I^d$ denotes the tuple of labels of columns of $\eta^d$, or equivalently rows of $\eta^{d-1}$.
    \item $C^d$ denotes the tuple of labels of columns of $\gamma^d$, or equivalently rows of $\gamma^{d-1}$.
\end{itemize}
\begin{algorithmic}[1]
    \State $\gamma^{-2} \gets$ labeled matrix with no columns, and row labels $I^0$ 
    
    \Comment{We are building complex $(C^\bullet,\gamma^\bullet)$ on $\Pi\sqcup_f \Lambda$}
    \State $k\gets -1$
    \While{$C^d\neq 0$ or $d < n$}
        \State $\gamma^{d} \gets$ labeled matrix with columns labeled by $C^d$
        \ForEach{$r$ row in $\eta^{d+1}$}
            \State \label{algoline:pullback_of_complex_of_injectives_add_row} add a row $-r$ to $\gamma^d$
            \Comment{Including the label; with $0$ in columns not labeled by $\Lambda$}
        \EndFor

        \For{$\pi\in\Pi$ in non-increasing order}
            \State $\gamma^d \gets$\makeexact$(\gamma^{d-1},\gamma^d,\pi)$
            \Comment{Algorithm~\ref{algo:makeeact}}
        \EndFor
        \State $d \gets d+1$
    \EndWhile
    \State $m\gets \max\left\{ m \, \middle| \, C^m\neq 0 \right\}$
    \State \Return $\gamma^{-1}[\Pi,\Pi], \dots, \gamma^m[\Pi,\Pi]$
    \Comment{Note that $\gamma^{-2}[\Pi,\Pi]$ has no columns and no rows.}
\end{algorithmic}
\end{algorithm}

\begin{proposition}\label{lem:correctness_pullback_algorithm}
    Given an injective complex $I^\bullet$ on $\Lambda$ and an order preserving map $f: \Pi\rightarrow \Lambda$, Algorithm~\ref{algo:pullback_of_complex_of_injectives} outputs a complex of labeled matrices representing $\Rfunc f^\ast I^\bullet$. The output is minimal regardless of whether $I^\bullet$ is minimal.
\end{proposition}
\begin{proof}
    We use symbols as defined above the proposition. Since $\hat{I}^\bullet$ only has indecomposables generated be elements in $\Lambda$, and all the newly added rows are labeled by $\Pi$, the outputted complex of labeled matrices, $\gamma^\bullet[\Pi, \Pi]$, represents the injective complex $(J^\bullet, \delta^\bullet)$ from Lemma~\ref{lem:poset_cone_and_mapping_cone}. By its claim and Lemma~\ref{lem:mapping_cone_of_quasi-isomorphism}, it is quasi-isomorphic to $f^\ast I^\bullet$ iff $C^\bullet$ is exact for all $\pi\in\Pi$. This is true by construction, which we argue as in Lemma~\ref{lem:correctness_injective_resolution_sheaf}: by Lemma~\ref{lem:makeexact}, we have exactness right after the application of \makeexact, and due to the induction order this does not change later in the algorithm.

    We claim that the resulting complex is minimal (no mater whether $I^\bullet$ was minimal). According to Section~\ref{sec:derived_categories:algorithm_minimize}, we need $\delta^d[\pi,\pi]$ to be zero (or empty) for every element $\pi\in\Pi$ and degree $d$. Since $\delta^d[\pi,\pi]=\gamma^d[\pi,\pi]$, we can prove this for $\gamma^d$ and $\pi\in\Pi$. All rows labeled by $\pi$ were added by calling \makeexact. By Lemma~\ref{lem:makeexact}, if we added a row labeled by $\pi$ in $\gamma^{d-1}$ as the $j$-th row of $\gamma^{d-1}(\pi)$, then the canonical vector $e_j$ is in the image of $\gamma^{d-1}(\pi)$. Since we have a complex, this implies $\gamma^d(\pi)\cdot e_j=0$, which means that the $j$-th column of $\gamma^d(\pi)$ is $0$. This argument is true for all columns labeled by $\pi$ in $\gamma^d$, so we have $\gamma^d[\St\pi,\pi]=0$, and in particular $\gamma^d[\pi,\pi]=0$.
\end{proof}

\paragraph{Alternative minimization algorithm} As an alternative to Algorithm~\ref{algo:peeling} for turning a complex of injective sheaves into the minimal one, we can perform the minimization by computing $\Rfunc\,\id^*I^\bullet$. This complex is minimal, because Algorithm~\ref{algo:pullback_of_complex_of_injectives} always returns a minimal complex, and it is quasi-isomorphic to $I^\bullet$ by the functoriality of (derived) pullback.

\subsubsection{Computing Proper Pushforward} \label{sec:proper_pushforward_algo}

Fix a locally closed subset $Z\xhookrightarrow{i_Z} \Pi$ and a complex $(I^\bullet,\eta^\bullet)$ of injective sheaves on $Z$, and recall that $\Cl Z$ denotes the downwards closure of $Z$ in $\Pi$. In Algorithm~\ref{algo:proper_pushforward_of_complex_of_injectives} we give a construction of $\Rfunc (i_Z)_!I^\bullet$. The construction is very similar to Algorithm~\ref{algo:pullback_of_complex_of_injectives}---we again start with some prefilled labeled matrices, and add new rows to force exactness of the complex on a subset of $\Pi$. However, the interpretation of the constructed matrices is different. Whereas the constructed matrices in Algorithm~\ref{algo:pullback_of_complex_of_injectives} describe a mapping cone, in Algorithm~\ref{algo:proper_pushforward_of_complex_of_injectives} they describe directly the complex of injective sheaves we are looking for.

As noted above in Section~\ref{sec:proper_pullback}, $(i_Z)_! I^\bullet$ is a subcomplex of $(i_Z)_\ast I^\bullet$, which is the complex represented by the same labeled matrices as $I^\bullet$, viewed as a complex over $\Pi$ rather than $Z$. They differ on $\Cl Z \setminus Z$, where the former is $0$ while the latter is non-zero. The idea is to extend $(i_Z)_\ast I^\bullet$ to $J^\bullet$ such that the inclusion $\alpha^\bullet: (i_Z)_! I^\bullet \hookrightarrow J^\bullet$ becomes a quasi-isomorphism. Consider $(C^\bullet, \gamma^\bullet)$, the mapping cone of $\alpha^\bullet$, and recall that by Lemma~\ref{lem:mapping_cone_of_quasi-isomorphism} we want it to be an exact complex. For $\pi\in Z$ we can leave $J^\bullet(\pi) = (i_Z)_\ast I^\bullet(\pi)$, and $C^\bullet(\pi)$ is exact, since $\alpha^\bullet(\pi)$ is the identity. Same for $\pi\in\Pi\setminus\Cl Z$ where all considered sheaves are zero. We need to fix the chains for $\pi\in\Cl Z \setminus Z$, which we do by adding indecomposables $[\pi]$ via the \makeexact\ algorithm. The procedure is described as Algorithm~\ref{algo:proper_pushforward_of_complex_of_injectives}.

\begin{algorithm}[htb]
    \caption{Computing $\Rfunc ({i_Z})_! I^\bullet$}\label{algo:proper_pushforward_of_complex_of_injectives}
    
    \textbf{Inputs:}\vspace{-2mm}
    \begin{itemize}
        \item $\eta^{-1},\dots,\eta^n$ labeled matrices representing a complex of injective sheaves $(I^\bullet,\eta^\bullet)$ on $Z$
        \item $\Pi$ a poset, and $Z\subseteq\Pi$ locally closed
    \end{itemize}
    \textbf{Output:} $\delta^{-1},\dots,\delta^m$ labeled matrices representing complex $\Rfunc ({i_Z})_! I^\bullet$ on $\Pi$ \\
    \textbf{Notation:}\vspace{-2mm}
    \begin{itemize}
        \item $I^d$ denotes the tuple of labels of columns of $\eta^d$, or equivalently rows of $\eta^{d-1}$.
        \item $J^d$ denotes the tuple of labels of columns of $\delta^d$, or equivalently rows of $\delta^{d-1}$.
    \end{itemize}
    \begin{algorithmic}[1]
        \State $\delta^{-1}\gets$ labeled matrix with no columns and row labels $I^0$
        \State $d\gets 0$
        \While{$J^d\neq 0$ or $d \leq n$}
            \State $\delta^{d} \gets$ labeled matrix with columns labeled by $J^d$
            \ForEach{$r$ row in $\eta^{d}$}
                \State add a row $r$ to $\delta^d$
                \Comment{Including the label; with $0$ in columns not labeled by $Z$}
            \EndFor

            \For{$\pi\in \Cl Z \setminus Z$ in non-increasing order} \label{algoline:proper_pushforward_of_complex_of_injectives_makeexact_loop}
                \State $\delta^d \gets$\makeexact$(\delta^{d-1},\delta^d,\pi)$
                \Comment{Algorithm~\ref{algo:makeeact}}
            \EndFor
            \State $d \gets d+1$
        \EndWhile
        \State $m\gets \max\left\{ m \, \middle| \, J^m\neq 0 \right\}$
        \State \Return $\delta^{-1}, \dots, \delta^{m}$
    \end{algorithmic}
\end{algorithm}

\begin{proposition}\label{lem:correctness_proper_pushforward_algorithm}
    Given a locally closed subset $i_Z: Z \hookrightarrow \Pi$, and a minimal injective complex $I^\bullet$ on $Z$, Algorithm~\ref{algo:proper_pushforward_of_complex_of_injectives} outputs a complex of labeled matrices representing $\Rfunc (i_Z)_! I^\bullet$.
\end{proposition}
\begin{proof}
    The complex returned by the algorithm is $(J^\bullet, \delta^\bullet)$. We argued above the proposition that $\alpha^\bullet: (i_Z)_! I^\bullet \hookrightarrow J^\bullet$ is an injection and its mapping cone, $(C^\bullet, \gamma^\bullet)$, is exact for all $\pi\in\Pi\setminus \Cl Z$, as for those $\pi$ we have $(i_Z)_! I^\bullet(\pi) = J^\bullet (\pi)$. The remaining elements are $\pi\in \Cl Z \setminus Z$, which we loop through on line~\ref{algoline:proper_pushforward_of_complex_of_injectives_makeexact_loop}. Let $\pi\in \Cl Z \setminus Z$. Then $(i_Z)_! I^\bullet (\pi)$ is the zero complex, and $\gamma^d(\pi) = \delta^d(\pi)$ for all $d$.
    The exactness of $\delta^\bullet (\pi)$ in degree $d$ is forced by the construction after processing it---see Lemma~\ref{lem:makeexact}---and it does not change afterwards due to the order in which we process the elements.

    To show minimality of $J^\bullet$, we argue that $\delta^d[\pi, \pi]=0$ for each $\pi\in\Pi$ and degree $d$. This is trivial for $\pi\in\Pi\setminus\Cl Z$, follows from the minimality of $I^\bullet$ for $\pi\in Z$, and for $\pi\in\Cl Z\setminus Z$ we follow the same argument as for Algorithm~\ref{algo:pullback_of_complex_of_injectives} in Proposition~\ref{lem:correctness_pullback_algorithm}.
\end{proof}

\subsubsection{Computing Proper Pullback} \label{sec:proper_pullback_algo}

By Lemma~\ref{lem:proper_pullback_of_injective_sheaf}, to compute the proper pullback by an inclusion of a locally closed subset $Z$, we just need to throw away the indecomposables $[\pi]$ for $\pi\not\in Z$.

\begin{proposition}\label{lem:computation_proper_pullback}
     Given a locally closed subset $i_Z: Z \hookrightarrow \Pi$, and a minimal injective complex $(I^\bullet, \eta^\bullet)$ on $Z$ represented by labeled matrices, the proper pullback $\Rfunc i_Z^! I^\bullet$ is represented by matrices $\eta^d[Z, Z]$.
\end{proposition}

\paragraph{An interesting formula} Illustrating the utility of our algorithmic treatment of derived categories of sheaves, we make the following observation, which leads to an interesting formula for finite posets. We noted that Algorithm~\ref{algo:pullback_of_complex_of_injectives} and Algorithm~\ref{algo:proper_pushforward_of_complex_of_injectives} are essentially performing the same procedure. One difference is that in the former we start the construction of the $d$-th matrix by adding rows from $\eta^{d+1}$, while in the latter we add rows from $\eta^d$. The second difference is that in Algorithm~\ref{algo:pullback_of_complex_of_injectives} we take submatrices at the end---this can be interpreted as taking a proper pullback of the complex. Together, this yields an interesting relation between derived functors on posets.

\begin{proposition}\label{prop:pullback_via_proper}
    Let $f:\Pi\rightarrow \Lambda$ be an order preserving map, $p:\Pi\hookrightarrow \Pi\sqcup_f\Lambda$ and $\ell:\Lambda\hookrightarrow \Pi\sqcup_f\Lambda$ be the natural inclusions to the mapping cylinder of $f$, and let $I^\bullet$ be a complex of injective sheaves on $\Lambda$. Then \[
        \Rfunc f^* I^\bullet = \Rfunc p^! \Rfunc \ell_! I^{\bullet+1},
    \]
    where $I^{\bullet+1}$ denotes the complex with $(I^{\bullet+1})^d = I^{d+1}$.
\end{proposition}

\section{Microlocal Sheaf Theory and Discrete Morse Theory}\label{sec:MorseTheory}
In this section, we establish a microlocal generalization of two classical homological results of discrete Morse theory (Theorem \ref{thm:microlocal-morse} and Theorem \ref{thm:micro-morse-ineq}). These generalizations show that the fibers---preimages of singletons---of an order preserving map between posets which lie in the discrete microsupport of a given sheaf correspond to non-isomorphisms between sheaf hypercohomology groups of adjacent sub-level or super-level sets. We begin by introducing the discrete microsupport of a complex of sheaves and illustrating some of its basic properties. 
\subsection{Discrete Microsupport}
Microsupport was introduced and studied by Sato in the context of systems of linear differential equations and smooth manifolds \cite{SatoKawaiKashiwara, Sato, KashiwaraSchapira1994}. In this setting, the microsupport of a sheaf on a manifold is a conic subset of the cotangent bundle which describes `directions' in which certain homological properties of the sheaf will propagate. This is a natural and powerful invariant to consider, because, among other things, it naturally separates a topological space into strata in which the local properties of a given sheaf are `locally constant'. Our goal here is to translate this theory to finite posets, where vectors in the cotangent bundle are replaced by poset relations $\tau<\sigma$. We think of this analogy geometrically by imagining that our poset is the face poset of a triangulation of a smooth manifold. Here the face relation $\sigma < \tau$ corresponds to a family of cotangent vectors, based at a point $x$ in the geometric realization of $\tau$, and locally pointing in the `direction' of the geometric realization of $\tau$. In fact, we hope to make this analogy concrete in future work, by explicitly describing the correspondence between the microsupport of a sheaf on a manifold and the discrete microsupport of that sheaf on the face poset of an appropriate triangulation of the manifold. 

Recall that $D^b(\Pi)$ denotes the (skeleton of) derived category whose objects are minimal injective complexes over $\Pi$---and that for any complex there exists a unique quasi-isomorphic minimal injective complex. We first define the (non-microlocal) support of a sheaf on a poset, using the tools introduced in Section~\ref{sec:pullback}, \ref{sec:proper_pullback} and \ref{sec:hypercohomology}---the pullback, proper pullback, and hypercohomology.
\begin{definition}
Let $\Pi$ be a finite poset, $I^\bullet\in D^b(\Pi)$, and $i_\tau:\{\tau\}\hookrightarrow \Pi $ be the inclusion map. The \emph{$\ast$-support} of $I^\bullet\in D^b(\Pi)$ is defined as 
\[
\supp^\ast I^\bullet \coloneqq  \setdef{\tau\in \Pi}{\mathbb{H}^j \Rfunc i_\tau^\ast I^\bullet\neq0\text{ for some }j\in\mathbb{Z}}.
\]
 The \emph{$!$-support} of $I^\bullet\in D^b(\Pi)$ is defined as
\[
\supp^! I^\bullet \coloneqq  \setdef{\tau\in \Pi}{\mathbb{H}^j \Rfunc i_\tau^! I^\bullet\neq0\text{ for some }j\in\mathbb{Z}}.
\]
\end{definition}

\begin{proposition}\label{prop:mult-in-support}
Let $I^\bullet\in D^b(\Pi)$ be a minimal injective complex, and $m_{I^\bullet}^d(\pi)$ be the multiplicity of the indecomposable injective sheaf $[\pi]$ in $I^d$. 
Then
\[
    \pi\notin \supp^!I^\bullet \text{ if and only if } m_{I^\bullet}^d(\pi)=0\text{ for all }d\in\mathbb{Z}.
\]
Moreover,
\[
    \Cl\supp^!I^\bullet =  \Cl\supp^\ast I^\bullet, 
\]
where $\Cl\supp^\ast I^\bullet$, $\Cl\supp^! I^\bullet$ denote the downwards closure of $\supp^\ast I^\bullet$, $\supp^! I^\bullet$, respectively, corresponding to closure with respect to the Alexandrov topology (Definition \ref{def:alexandrov}).
\end{proposition}
\begin{proof}

The first statement is a direct consequence of Corollary \ref{cor:proper-pull-back-multiplicity}. We begin the proof of the second statement by showing that $\supp^!I^\bullet \subseteq \Cl\supp^\ast I^\bullet$. 
Suppose that $m^d_{I^\bullet}(\pi)\neq 0$ for some $d\in \mathbb{Z}$. If $\pi\in\supp^\ast I^\bullet$, then there is nothing to prove. Assume that $\pi\notin\supp^\ast I^\bullet$. Because $m^d_{I^\bullet}(\pi)\neq 0$, $I^d(\pi)$ contains a nonzero maximal vector, which we denote by $x$. Because $\pi\notin\supp^\ast I^\bullet$, $\mathbb{H}^d(I^\bullet(\pi))=0$. Therefore, there exists a vector $v\in I^{d-1}(\pi)$ which maps onto $x$. Because $I^\bullet$ is assumed to be minimal, $v$ is not maximal. Choose $\tau\ge \pi$ such that $I^{d-1}(\pi\le\tau)(v)$ is a maximal vector. Then $n^{d-1}_{I^\bullet}(\tau)\neq 0$. If $\tau\in \supp^\ast I^\bullet$, then $\pi\in\Cl\supp^\ast I^\bullet$. Otherwise, we can apply the same argument as above to produce a new simplex $\sigma\ge \tau$ such that $n^{d-2}_{I^\bullet}(\sigma)\neq 0$. Because the complex is bounded (and the poset $\Pi$ is finite), this process must terminate with an element $\omega\ge \pi$ such that $n^{d}_{I^\bullet}(\omega)\neq 0$ and $\omega\in\supp^\ast I^\bullet$, which proves that $\pi\in\Cl\supp^\ast I^\bullet$. 

Second, we prove the inclusion $\supp^\ast I^\bullet \subseteq   \Cl\supp^! I^\bullet$. Suppose $\tau\in \supp^\ast I^\bullet$. By the definition of $i_\tau^\ast$, if $\mathbb{H}^d \Rfunc i_\tau^\ast I^\bullet\neq 0$ for some $d\in \mathbb{Z}$, then there exists $\sigma\ge \tau$ such that $m^d_{I^\bullet}(\sigma)\neq 0$. Therefore, $\sigma\in \supp^! I^\bullet$, and $\tau\in\Cl\supp^! I^\bullet$. 
\end{proof}
\begin{remark}
The support is traditionally defined to be $\Cl\supp^\ast I^\bullet$, and is therefore, by definition, closed. As a consequence, there is usually no distinction between the $!$-support and the $\ast$-support. However, for the Morse theoretic results below, we find it useful to keep track of these differences. 
\end{remark}

We now introduce a novel definition of discrete microsupport, which is a finer invariant than the support(s) defined above. The elements of a microsupport are locally closed subsets of $\Pi$ rather than its elements.
\begin{definition}\label{def:microsupport}
    Let $\Pi$ be a finite poset and $I^\bullet\in D^b(\Pi)$. For a locally closed subset $Z$ of $\Pi$, denote the inclusion map by $i_Z:Z\hookrightarrow\Pi$. The \emph{discrete $\ast$-microsupport} of $I^\bullet$ is
    \begin{align*}
        \mu\supp^\ast I^\bullet \coloneqq&\setdef{ Z\text{ locally closed in }\Pi}{ \mathbb{H}^d \left(\Rfunc {i_Z}_!\Rfunc  i_Z^\ast I^\bullet\right)\neq 0\text{ for some }d\in\mathbb{Z}},
    \end{align*}
    and the \emph{discrete $!$-microsupport} of $I^\bullet$ is
        \begin{align*}
        \mu\supp^! I^\bullet \coloneqq&\setdef{Z\text{ locally closed in }\Pi}{\mathbb{H}^d\left(\Rfunc i_Z^! I^\bullet\right)\neq 0\text{ for some }d\in\mathbb{Z}}.
        \hphantom{\Rfunc i_Z^\ast} 
        \end{align*}
\end{definition}
By Lemma \ref{lemma:exact-sequence}, we can equivalently define the discrete microsupport by
\begin{align*}
    \mu\supp^\ast I^\bullet \coloneqq&\setdef{Z\text{ loc.\! closed in }\Pi}{\mathbb{H}^d\left(\Rfunc i_{\Cl Z}^\ast I^\bullet\right)\xrightarrow{\not{\sim}}\mathbb{H}^d\left(\Rfunc i_{\Cl Z-Z}^\ast I^\bullet\right)\,\text{for some }d\in\mathbb{Z}}\text{ and } \\
    \mu\supp^! I^\bullet \coloneqq&\setdef{ Z\text{ loc.\! closed in }\Pi}{\mathbb{H}^d \left(\Rfunc {i^\ast_{\St Z}} I^\bullet\right)\xrightarrow{\not{\sim}} \mathbb{H}^d \left(\Rfunc {i^\ast_{\St Z-Z}} I^\bullet\right)\,\text{for some }d\in\mathbb{Z}}.
\end{align*}

We use the functors $\Rfunc i^\ast$, $\Rfunc i_!$, and $\Rfunc i^!$ in the above definition to highlight its computability---indeed, we can easily compute the discrete microsupports defined here with our implementation of the algorithms \cite{BrownDraganov}, as showcased in Section~\ref{sec:examples}. For an arbitrary object in the bounded derived category $D^\textrm{b}(k_\Pi)$ as defined, e.g., in \cite{KashiwaraSchapira1994}, one can define the discrete microsupport by replacing $\Rfunc i^\ast$ with $i^\ast$, $\Rfunc i_!$ with the right derived functor $R i_!$, and $\Rfunc i^!$ with the left adjoint of $R i_!$.

In a recent work~\cite{SchapiraNote}, Schapira introduces the propagation-set of a derived sheaf as a means of generalizing the microsupport to presheaves on presites and ind-sheaves. In our setting, the propagation-set admits a simple definition
\begin{align*}
    \text{Prop }I^\bullet =\{\sigma\le\tau\in \Pi \,\,|\,\,
    \text{for all } \sigma\le \gamma\le\tau,\  
    & \St\sigma - \St\gamma \not\in \mu\supp^!I^\bullet
    \text{ and } \\
    & \St\gamma - \St \tau \not\in \mu\supp^!I^\bullet
    \}.
\end{align*}
This definition suggests that a more restrictive version of the discrete microsupport might have more desirable theoretical properties (at the expense of increased computational complexity). The more restrictive definition would consider a locally closed set $Z\not\in\mu\supp^! I^\bullet$ if and only if $\mathbb{H}^d \left(\Rfunc {i^!_{W}} I^\bullet\right)=0$ for each locally closed subset $W$ of $Z$ (similarly for $\mu\supp^\ast I^\bullet$). However, we choose to continue with the less restrictive definition of discrete microsupport, which is sufficient for our results on computational sheaf theory and discrete microlocal Morse theory. 
\begin{remark}
 It is immediate that $\pi\in\supp^! I^\bullet$ if and only if $\{\pi\}\in\mu\supp^! I^\bullet$. Therefore, the discrete $!$-microsupport generalizes the $!$-support in an obvious way. However, the discrete $\ast$-microsupport is strictly a generalization of the $\ast$-support only if $\Pi$ has the following property: for each $\pi\in \Pi$, if the boundary, $\Cl\pi-\pi$, of $\pi$ is non-empty, then the hypercohomology of the injective resolution of the constant sheaf on the boundary is non-zero. This happens, for example, whenever $\Pi$ is a simplicial complex. In general, $ \{\pi\in\Pi:\{\pi\}\in\mu\supp^\ast I^\bullet\}\subset \supp^\ast I^\bullet$ (for an example, consider the constant sheaf on a totally ordered poset with two elements). 
\end{remark}

\medskip
The first result we present ties together $!$-microsupport with linear maps in cohomology sheaves (recall Definition~\ref{def:cohomology_sheaf}).

\begin{theorem}
If $\St\sigma - \St\tau\notin \mu\supp^! I^\bullet$, for $\sigma\le\tau\in \Pi$, then the linear maps $H^d(I^\bullet)
(\sigma\le\tau)$ are isomorphisms for each $d\in \mathbb{Z}$. 
\end{theorem}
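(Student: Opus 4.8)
The plan is to strip the functors off the right-hand side of the hypothesis until it becomes, literally, the assertion that the sheaf restriction map $H^d(I^\bullet)(\sigma\le\tau)$ is an isomorphism. We may assume $\sigma\ne\tau$, the case $\sigma=\tau$ being trivial. Set $Z\coloneqq\St\sigma-\St\tau$. Since $\sigma\le\tau$ forces $\St\tau\subseteq\St\sigma$, we have $Z=\St\sigma\cap(\Pi-\St\tau)$, which is locally closed; and a one-line computation gives $\St Z=\St\sigma$ and $\St Z-Z=\St\tau$ (note $\sigma\in Z$ because $\sigma\notin\St\tau$, while every element of $\St\sigma$ lies above $\sigma$). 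Now I would invoke the equivalent description of the discrete $!$-microsupport recorded just after Definition~\ref{def:microsupport} — it follows from Lemma~\ref{lemma:exact-sequence} applied to the partition of the open subposet $\St Z$ into the closed set $Z$ and the open set $\St Z-Z$ — which says that $Z\notin\mu\supp^! I^\bullet$ if and only if the restriction map $\mathbb{H}^d(Ri_{\St\sigma}^\ast I^\bullet)\to\mathbb{H}^d(Ri_{\St\tau}^\ast I^\bullet)$ is an isomorphism for every $d\in\mathbb{Z}$.

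It then remains to identify these hypercohomology groups, and the map between them, with the cohomology sheaf and its restriction map. First I would note that for an open (upward-closed) subset $U$ with inclusion $i_U\colon U\hookrightarrow\Pi$ one has $i_U^!=i_U^\ast$, since $\St_\Pi\zeta\subseteq U$ for every $\zeta\in U$ makes the defining condition for $i_U^!$ vacuous; hence by Lemma~\ref{lem:proper_pullback_of_injective_sheaf} and the minimality observation following it, restriction to $U$ sends minimal injective complexes to minimal injective complexes, so $Ri_U^\ast I^\bullet=I^\bullet|_U$ already. Second, if $U$ has a least element $u$ (for instance $U=\St u$), sections over $U$ coincide with evaluation at $u$, so the pushforward along $U\to\{\star\}$ is exact and computes hypercohomology on the nose. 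Combining, $\mathbb{H}^d(Ri_{\St\rho}^\ast I^\bullet)=H^d\big((I^\bullet|_{\St\rho})(\rho)\big)=H^d(I^\bullet(\rho))=H^d(I^\bullet)(\rho)$ for each $\rho\in\Pi$, and for $\sigma\le\tau$ the restriction morphism $I^\bullet|_{\St\sigma}\to I^\bullet|_{\St\tau}$ induces, under these identifications, the sheaf map $H^d(I^\bullet)(\sigma\le\tau)$ — because on cochains the restriction $\Gamma(\St\sigma;I^d)\to\Gamma(\St\tau;I^d)$ becomes $I^d(\sigma\le\tau)$ once $\Gamma(\St\rho;I^d)$ is identified with $I^d(\rho)$ via evaluation at $\rho$. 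Substituting this back into the first paragraph converts the hypothesis into exactly the desired conclusion.

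The step I expect to require actual care, rather than mechanical substitution of definitions, is the naturality bookkeeping in the second paragraph: verifying that the restriction map occurring in the Mayer--Vietoris-type sequence of Lemma~\ref{lemma:exact-sequence}, after being carried through the chain of isomorphisms $\mathbb{H}^d(Ri_{\St\rho}^\ast I^\bullet)\cong H^d(I^\bullet)(\rho)$, is genuinely the sheaf restriction map $H^d(I^\bullet)(\sigma\le\tau)$ and not some other comparison map. Everything else — the description of $\St Z$, the identity $i_U^!=i_U^\ast$ for open $U$, the exactness of global sections over a poset with a least element, and the unravelling of $H^d(I^\bullet)$ — is routine.
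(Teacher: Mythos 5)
Your proposal is correct and takes essentially the same route as the paper: the paper applies Lemma~\ref{lemma:exact-sequence} directly to the partition $\St\sigma = Z \sqcup \St\tau$ and notes that the $\mathbb{H}^d(Ri_Z^!I^\bullet)$ terms vanish, while you invoke the equivalent characterization of $\mu\supp^!$ recorded just after Definition~\ref{def:microsupport} — which is itself derived from that same lemma applied to $\St Z = Z\sqcup(\St Z - Z)$, and after your (correct) computation $\St Z=\St\sigma$, $\St Z-Z=\St\tau$ this is the identical long exact sequence. The one place you expend more care than the paper, namely unwinding $\mathbb{H}^d(Ri_{\St\rho}^\ast I^\bullet)\cong H^d(I^\bullet)(\rho)$ via $i_U^!=i_U^\ast$ for open $U$ and exactness of sections over a poset with least element, is exactly what the paper compresses into its opening ``by definition''; your version makes that identification explicit, which is a mild improvement in rigor but not a different argument.
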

\begin{proof}
Let $\sigma\le\tau\in\Pi$.
 The linear map $H^d(I^\bullet)(\sigma\le\tau)$ is, by definition,
 \[
 H^d(I^\bullet)(\sigma\le\tau):\mathbb{H}^d(i_\sigma^*I^\bullet)\rightarrow  \mathbb{H}^d(i_\tau^*I^\bullet), 
 \]
 where $i_\gamma:\St\gamma\hookrightarrow \Pi$. Because $\St\tau$ is an open subset of $\St\sigma$ for $\sigma\le \tau$, we can apply Lemma \ref{lemma:exact-sequence} to get a long exact sequence
 \begin{align*}
\cdots\rightarrow& \mathbb{H}^d(\Rfunc i_Z^! I^\bullet)\rightarrow \mathbb{H}^d( i_\sigma^\ast I^\bullet)\rightarrow
\mathbb{H}^d(\Rfunc i_\tau^\ast  I^\bullet)\\
&\rightarrow \mathbb{H}^{d+1}(\Rfunc i_Z^!I^\bullet)\rightarrow\cdots
\end{align*}
where $Z=\St\sigma-\St\tau$. Because $Z\notin \mu\supp^!I^\bullet$, $\mathbb{H}^d(\Rfunc i_Z^! I^\bullet)=0$ for all $d\in\mathbb{Z}$. Therefore, $ H^d(I^\bullet)(\sigma\le\tau)$ is an isomorphism for each $d\in\mathbb{Z}$. 
\end{proof}
An immediate consequence of the above theorem is the following:
\begin{corollary}\label{cor:locally_constant_no_st_diff_in_microsupport}
    If $\mu\supp^! I^\bullet$ contains no sets of the form $\St\sigma-\St\tau$, then the cohomology sheaves $H^d(I^\bullet)$ are locally constant.
\end{corollary}
This shows that the discrete mircosupport can be used to separate our poset into `strata' over which cohomology sheaves are locally constant. We think a natural follow-up to this paper could use this fact to study sheaf theoretic stratification algorithms from the derived perspective, generalizing/building on prior work of Nanda and Brown-Wang \cite{Nanda2019, BrownWang}. 

\subsection{The Discrete Microlocal Morse Theorem and Inequalities}

Discrete Morse theory identifies topologically relevant parts of discrete objects and lets us skip parts that are ``essentially contractible''. A filtration function on a simplicial complex, $f:\Sigma\rightarrow\R$, is called \emph{generalized discrete Morse} if it is order preserving and every non-empty level set, $f^{-1}(r)$, is an interval---a set of the form $[\mu, \tau] = \setdef{\sigma\in\Sigma}{\mu \leq \sigma \leq \tau}$; see, e.g., \cite{Freij2009, BauerEdelsbrunner2016}. The singletons in the level set decomposition of $\Sigma$ are called \emph{critical simplices}, and their images \emph{critical values}. The critical values identify changes in cohomology.
Namely, comparing growing sublevel sets $f^{-1}((-\infty,s])$ and $f^{-1}((-\infty,r])$, their cohomology (even homotopy type) is the same if the interval $(s,r]$ contains no critical values. Conversely, it always differs if there is exactly one critical value in $(s, r]$.
The theorems below generalize the homological claim. Both $\Sigma$ and $\R$ are replaced by general (finite) posets. Intervals are replaced by locally closed sets, which allows us to consider any order preserving map $f$. This relaxation is counterbalanced by a more complicated structure of critical elements.

The results of this section are discrete analogues of work by Kashiwara~\cite{Kashiwara1983}, Schapira--Tose~\cite{SchapiraTose}, and Witten~\cite{Witten}. In fact, the proofs of Theorem \ref{thm:microlocal-morse} and Theorem \ref{thm:micro-morse-ineq} closely follow the main ideas presented in \cite{Kashiwara1983} and \cite{SchapiraTose}.

\begin{definition}
Suppose that $f:\Lambda\rightarrow\Pi$
is an order preserving map of finite posets. Let $I^\bullet\in D^b(\Lambda)$. An element $\pi\in\Pi$ is
\begin{itemize}
    \item \emph{$(I^\bullet, f,\ast)$-critical} if $f^{-1}(\pi)\in \mu\supp^\ast I^\bullet$,
    \item \emph{$(I^\bullet, f,!)$-critical} if $f^{-1}(\pi)\in \mu\supp^! I^\bullet$. 
\end{itemize}
\end{definition}

\begin{theorem}[Discrete Microlocal Morse Theorem]\label{thm:microlocal-morse}
Let $f:\Lambda\rightarrow\Pi$ be an order preserving map of finite posets. 
For $\pi\in \Pi$, let \[i_{\le \pi}:\{\sigma\in\Lambda:f(\sigma)\le \pi\}\hookrightarrow \Lambda, \quad i_{\ge \pi}:\{\sigma\in\Lambda:f(\sigma)\ge \pi\}\hookrightarrow \Lambda\] be inclusion maps of sublevel-sets and superlevel-sets of $f$. For $a\le b\in\Pi$, let \[(a,b]=\{z\in\Pi:a< z\le b\}.\]
For $I^\bullet\in D^b(\Lambda)$, if $(a,b]$ contains no $(I^\bullet,f,!)$-critical elements, then the morphisms (induced by inclusions)
\begin{align}
\mathbb{H}^d\left( \Rfunc i_{\le b}^!I^\bullet \right)&\rightarrow \mathbb{H}^d \left(\Rfunc  i_{\le a}^!I^\bullet \right)\text{ and }\label{eqn:first-iso}\\
\mathbb{H}^d\left(\Rfunc i_{\ge a}^\ast I^\bullet \right)&\rightarrow \mathbb{H}^d \left(\Rfunc i_{\ge b}^\ast I^\bullet \right)\label{eqn:second-iso}
\end{align}
are isomorphisms for each $d\in\mathbb{Z}$.  If $(a,b]$ contains no $(I^\bullet,f,\ast)$-critical elements, then the morphisms
\begin{align}
\mathbb{H}^d\left( \Rfunc i_{\le b}^\ast I^\bullet \right)&\rightarrow \mathbb{H}^d \left(\Rfunc i_{\le a}^\ast I^\bullet \right)\label{eqn:third-iso}
\end{align}
are isomorphisms for each $d\in\mathbb{Z}$.
\end{theorem}
\begin{proof}
We begin by describing each of the morphisms of the theorem in more detail. By Proposition~\ref{lem:computation_proper_pullback}, computing $\mathbb{H}^d(\Rfunc i_{\le b}^! I^\bullet)$ is quite simple. First, we consider the complex of labeled submatrices of $I^\bullet$ with labels in the set $\{\sigma\in\Lambda:f(\sigma)\le b\}$. Then we forget the labels and take the $d$-cohomology group of the complex of vector spaces these matrices represent. In this basis, morphism~\eqref{eqn:first-iso} is the map induced on cohomology groups by the projection map from the complex of injective subsheaves of $I^\bullet$ generated by maximal vectors which are supported in the set $\{\sigma\in\Lambda:f(\sigma)\le b\}$ to those with maximal vectors supported in the set $\{\sigma\in\Lambda:f(\sigma)\le a\}$. Morphism~\eqref{eqn:second-iso} is similarly induced by projection maps. Morphism~\eqref{eqn:third-iso} is slightly more involved. First, let $I^\bullet\vert_\pi$ be the restriction of $I^\bullet$ to $\{\sigma\in\Lambda:f(\sigma)\le \pi\}$. (Note that $I^\bullet\vert_\pi$ is not necessarily a complex of injective sheaves). There is a morphism of complexes of sheaves from $I^\bullet \vert_b$ to $I^\bullet \vert_a$ (where both complexes are viewed as complexes of sheaves on $\Lambda$ by extending sheaves by zero outside of their support) which is the identity map on the set $\{\sigma\in\Lambda:f(\sigma)\le a\}$ and zero elsewhere. This morphism of complexes of sheaves induces a morphism between the corresponding injective resolutions, which in turn induces morphism~\eqref{eqn:third-iso}. 

We prove that \eqref{eqn:first-iso}, \eqref{eqn:second-iso}, and \eqref{eqn:third-iso} are isomorphisms by realizing each as a composition isomorphisms. Specifically, there exists $\{a_i\}_{i=0}^k\subset \Pi$ such that $a=a_0 < a_1 < a_2 < \cdots < a_k = b$ and $(a_i,a_{i+1}]=\{a_{i+1}\}$. The morphism~\eqref{eqn:first-iso} is equal to the composition of maps of the form
\[
\mathbb{H}^d\left( \Rfunc i_{\le a_{i+1}}^!I^\bullet \right)\rightarrow \mathbb{H}^d \left(\Rfunc  i_{\le a_i}^!I^\bullet \right),
\]
because each is induced by a projection map described in the paragraph above. (A similar argument applies to \eqref{eqn:second-iso} and \eqref{eqn:third-iso}). We proceed by proving that for each pair $a_i< a_{i+1}$, the maps described above are isomorphisms. To make the notation easier to follow, we simply let $a=a_i$ and $b = a_{i+1}$ for the remainder of the proof.

We begin with equation \eqref{eqn:first-iso}. Because $f^{-1}_{\le a}$ is a closed subset of $f^{-1}_{\le b}$, by Lemma \ref{lemma:exact-sequence}, and the fact that $\Rfunc i^\ast_{\le a}\Rfunc i^\ast_{\le b}I^\bullet = \Rfunc i^\ast_{\le a}I^\bullet$, we have a long exact sequence of cohomology groups 
\begin{align*}
\cdots\rightarrow& \mathbb{H}^d\left(\Rfunc i_{\le a}^!I^\bullet\right)\rightarrow \mathbb{H}^d\left(\Rfunc i_{\le b}^!I^\bullet\right)\rightarrow
\mathbb{H}^d\left(\Rfunc i_{f^{-1}(b)}^! I^\bullet\right)\\
&\rightarrow \mathbb{H}^{d+1}\left(\Rfunc i_{\le a}^!I^\bullet\right)\rightarrow\cdots
\end{align*}
By the definition of $(I^\bullet,f,!)$-critical elements and $\mu\supp^!I^\bullet$, $\mathbb{H}^d\left(\Rfunc i_{f^{-1}(b)}^!I^\bullet\right)=0$. Therefore, 
\[\mathbb{H}^d\left(\Rfunc i_{\le a}^!I^\bullet\right)\rightarrow \mathbb{H}^d\left(\Rfunc i_{\le b}^!I^\bullet\right)\]
is an isomorphism for each $d\in\mathbb{Z}$. 

We now turn to equation \eqref{eqn:second-iso}. Because $f^{-1}_{\ge a}$ is an open subset of $\Lambda$, 
\[\Rfunc i_{f^{-1}(b)}^!\Rfunc i_{\ge a}^\ast I^\bullet = \Rfunc i_{f^{-1}(b)}^!\Rfunc i_{\ge a}^! I^\bullet =\Rfunc i_{f^{-1}(b)}^!I^\bullet, \] 
where $i_{f^{-1}(b)}:f^{-1}(b)\hookrightarrow f^{-1}_{\ge a}$. Moreover, because $f^{-1}_{\ge b}$ is an open subset of $f^{-1}_{\ge a}$, Lemma \ref{lemma:exact-sequence} yields 
\begin{align*}
\cdots\rightarrow& \mathbb{H}^d\left(\Rfunc i_{f^{-1}(b)}^!I^\bullet \right)\rightarrow \mathbb{H}^d\left(\Rfunc i_{\ge a}^\ast I^\bullet\right)\rightarrow
\mathbb{H}^d\left(\Rfunc i_{\ge b}^\ast I^\bullet\right)\\
&\rightarrow \mathbb{H}^{d+1}\left(\Rfunc i_{f^{-1}(b)}^!I^\bullet\right)\rightarrow\cdots
\end{align*}
Again, by the definition of $(I^\bullet,f,!)$-critical elements and $\mu\supp^!I^\bullet$, $\mathbb{H}^d\left(\Rfunc i_{f^{-1}(b)}^!I^\bullet\right)=0$, and equation \eqref{eqn:second-iso} follows. 

Finally, we turn to equation \eqref{eqn:third-iso}. Because $f^{-1}(b)$ is open in $f^{-1}_{\le b}$, Lemma \ref{lemma:exact-sequence} gives us the long exact sequence
\begin{align}
\cdots\rightarrow& \mathbb{H}^d(\Rfunc i_{f^{-1}(b)!}\Rfunc i_{f^{-1}(b)}^\ast I^\bullet)\rightarrow \mathbb{H}^d(\Rfunc i^\ast_{\le b} I^\bullet)\rightarrow
\mathbb{H}^d(\Rfunc i_{\le a}^\ast I^\bullet)\\
&\rightarrow \mathbb{H}^{d+1}(\Rfunc i_{f^{-1}(b)!}\Rfunc i_{f^{-1}(b)}^\ast I^\bullet)\rightarrow\cdots\nonumber
\end{align}
where $i_{f^{-1}(b)}:f^{-1}(b)\hookrightarrow f^{-1}_{\le b}$. By the definition of $(I^\bullet,f,\ast)$-critical elements and $\mu\supp^\ast I^\bullet$, \[\mathbb{H}^d(\Rfunc j_{f^{-1}(b)!}\Rfunc j_{f^{-1}(b)}^\ast I^\bullet)=0,\] and equation \eqref{eqn:third-iso} follows. 
\end{proof}

\begin{theorem}[Discrete Microlocal Morse Inequality]\label{thm:micro-morse-ineq}
    Let $\kappa^!\subset \Pi$ (and $\kappa^\ast \subset \Pi$) denote the set of $(I^\bullet,f,!)$-critical elements in $\Pi$ (and $(I^\bullet,f,\ast)$-critical elements, respectively). For each $\ell\in\mathbb{Z}$,
    \begin{align}
        (-1)^\ell \sum_{j\le\ell} (-1)^j\dim \mathbb{H}^j(I^\bullet)&\le (-1)^\ell \sum_{\pi\in \kappa^!}\sum_{j\le\ell} (-1)^j\dim \mathbb{H}^j\left(\Rfunc i_{f^{-1}(\pi)}^! I^\bullet\right),\text{ and}\\
          (-1)^\ell \sum_{j\le\ell} (-1)^j\dim \mathbb{H}^j(I^\bullet)&\le (-1)^\ell \sum_{\pi\in \kappa^\ast}\sum_{j\le\ell} (-1)^j\dim \mathbb{H}^j\left(\Rfunc i_{f^{-1}(\pi)!}\Rfunc i_{f^{-1}(\pi)}^\ast I^\bullet\right).
    \end{align}
    Moreover, the inequalities become equalities when we some over all $j\in\Z$: 
    \begin{align}
         \chi I^\bullet&= \sum_{\pi\in\kappa^!} \chi \Rfunc i^!_{f^{-1}(\pi)} I^\bullet= \sum_{\pi\in\kappa^\ast} \chi \Rfunc i_{f^{-1}(\pi)!}\Rfunc i^\ast_{f^{-1}(\pi)} I^\bullet.
    \end{align}
\end{theorem}
\begin{proof}
    Suppose 
    \[
    \cdots \rightarrow V_C^{d-1}\rightarrow V_A^{d}\rightarrow V^d_B\rightarrow V^d_C\rightarrow V_A^{d+1}\rightarrow V_B^{d+1}\rightarrow V_C^{d+1}\rightarrow V_A^{d+2}\rightarrow  \cdots
    \]
    is a long exact sequence of vector spaces. Because the sequence is exact, 
    \begin{align*}
        \sum_{j\in\mathbb{Z}}(-1)^j (\dim V_A^j-\dim V_B^j+\dim V_C^j)=0.
    \end{align*}
    Moreover, the sequence 
    \[
    \cdots \rightarrow V_C^{\ell-1}\rightarrow V_A^{\ell}\rightarrow V^\ell_B\xrightarrow{\delta} V^\ell_C\rightarrow \coker\delta \rightarrow 0\rightarrow  \cdots
    \]
    is exact, and therefore
    \begin{align*}
        (-1)^{\ell}\sum_{j\le \ell}(-1)^j (\dim V_A^j-  \dim V_B^j+  \dim V_C^j)= \dim \coker\delta\ge 0. 
    \end{align*}
    Rearranging the above equations gives 
    \begin{align*}
        (-1)^\ell \sum_{j\le \ell}(-1)^j \dim V_B^j&\le (-1)^\ell  \sum_{j\le \ell}(-1)^j (\dim V_A^j+  \dim V_C^j), \text{ and }\\
        \sum_{j\in\mathbb{Z}}(-1)^j \dim V_B^j&=  \sum_{j\in\mathbb{Z}}(-1)^j (\dim V_A^j+  \dim V_C^j).
    \end{align*}
    Substituting the long exact sequences of Lemma \ref{lemma:exact-sequence} to the above inequalities,  we have, for each $a\le b \in \Pi$ such that $(a,b]=\{b\}$,
    \begin{align*}
        (-1)^\ell \sum_{j\le \ell }&(-1)^j \dim \mathbb{H}^j(\Rfunc i^!_{\le b} I^\bullet) \le(-1)^\ell
        \sum_{j\le \ell}(-1)^j (\dim \mathbb{H}^j(\Rfunc i^!_{\le a} I^\bullet)+  \dim \mathbb{H}^j(\Rfunc i^\ast_{f^{-1}( b)} I^\bullet) ),\\
        \text{ and } \sum_{j\in\mathbb{Z} }&(-1)^j \dim \mathbb{H}^j(\Rfunc i^!_{\le b} I^\bullet) = \sum_{j\in\mathbb{Z}}(-1)^j (\dim \mathbb{H}^j(\Rfunc i^!_{\le a} I^\bullet)+  \dim \mathbb{H}^j(\Rfunc i^\ast_{f^{-1}( b)} I^\bullet) ).
    \end{align*}
    If $b\in \Pi$ is not $(I^\bullet,f,!)$-critical, then $ \dim \mathbb{H}^j(\Rfunc i^\ast_{f^{-1}( b)} I^\bullet) )=0$, which implies 
    \begin{align*}
        (-1)^\ell \sum_{j\le \ell}(-1)^j \dim \mathbb{H}^j(\Rfunc i^!_{\le b} I^\bullet) &\le (-1)^\ell 
        \sum_{j\le \ell}(-1)^j \dim \mathbb{H}^j(\Rfunc i^!_{\le a} I^\bullet),\text{ and }\\
         \sum_{j\in\mathbb{Z}}(-1)^j \dim \mathbb{H}^j(\Rfunc i^!_{\le b} I^\bullet) &= 
        \sum_{j\in\mathbb{Z}}(-1)^j \dim \mathbb{H}^j(\Rfunc i^!_{\le a} I^\bullet).
    \end{align*}
    By choosing a total order on $\Pi$ which respects the given partial order, and inductively applying the previous inequalities, we get 
    \begin{align*}
        (-1)^\ell \sum_{j\le \ell}(-1)^j \dim \mathbb{H}^j( I^\bullet)& \le 
        \sum_{\pi \in \kappa^!} (-1)^\ell \sum_{j\le \ell}(-1)^j  \dim \mathbb{H}^j(\Rfunc i^\ast_{f^{-1}( \pi)} I^\bullet), \text{ and }\\
        \chi I^\bullet&= \sum_{\pi\in\kappa^!}\chi \Rfunc i^\ast_{f^{-1}(\pi)}I^\bullet.
    \end{align*}
    The remaining (in)equalities follow from similar arguments. 
\end{proof}

\subsection{Reduction to Classical Discrete Morse Theory}

We will conclude this section by illustrating how the prior results generalize classical discrete Morse theory. 
Let $\Sigma$ be a finite simplicial complex, and $I_{k_\Sigma}^\bullet\in D^b(\Sigma)$ the minimal injective resolution of the constant sheaf on $\Sigma$. Let $f:\Sigma\rightarrow \R$ be a generalized discrete Morse function, i.e., an order preserving map such that each level set, $f^{-1}(x)$, is an interval $[\mu, \tau] = \setdef{\sigma\in\Sigma}{\mu \leq \sigma \leq \tau}$.
For the codomain to be a finite poset, replace $\R$ by the image of $f$ denoted by $\Pi$. We have a map $f:\Sigma\rightarrow\Pi$. Let $f^{-1}_c \coloneqq \{\sigma\in\Sigma:f(\sigma)\leq c\}$ be the sublevel set of $f$ for any $c\in \Pi$.


\begin{proposition}
The element $c\in\Pi$ is $(I_{k_\Sigma}^\bullet,f,\ast)$-critical if and only if $f^{-1}(c)$ is a singleton. 
\end{proposition}
\begin{proof}
 By assumption $f^{-1}(c)=[\mu, \tau]$. Let $\Cl \tau$ denote the closure of $\tau$ in $\Sigma$. Because $I_{k_\Sigma}^\bullet$ is an injective resolution of the constant sheaf on $\Sigma$, the pullback $ \Rfunc i_{f^{-1}(c)}^\ast I_{k_\Sigma}^\bullet$ is an injective resolution of the constant sheaf on $f^{-1}(c)$ and we have the following isomorphisms \[
    \mathbb{H}^d\Rfunc (i_{f^{-1}(c)})_!\Rfunc i_{f^{-1}(c)}^\ast I_{k_\Sigma}^\bullet
    \cong H_c^d([\mu, \tau];k)
    \cong H_d(\Cl \tau,\Cl \tau-\St \mu;k)\]
between hypercohomology, compactly supported cohomology, and relative homology for each~$d$. Because $\Cl \tau$ is a closed simplex, the hypercohomology groups vanish if and only if $\mu\neq \tau$.
\end{proof}

The right derived pullback $\Rfunc i^\ast_{{c}}I_{k_\Sigma}^\bullet$ of the injective resolution of the constant sheaf on $\Sigma$ by the inclusion map $i_{{c}}:f^{-1}_{{c}}\hookrightarrow \Sigma$, is an injective resolution of the constant sheaf on $f^{-1}_{c}$, and the hypercohomology groups are isomorphic to the usual simplicial cohomology groups:
\[
\mathbb{H}^d\left(\Rfunc i^\ast_{{ c}}I_{k_\Sigma}^\bullet\right)
=
H^d\left(\Rfunc p_\ast \Rfunc i^\ast_{{ c}}I_{k_\Sigma}^\bullet\right)
\cong
H^d\left(f^{-1}_{c};k\right).
\]

Therefore, in this context, Theorem \ref{thm:microlocal-morse} states that if $ \#f^{-1}(b)>1$ for each element $b\in \Pi$ with $a < b \leq c$, then the natural morphisms between simplicial cohomology groups
\begin{align*}
H^d\left(f^{-1}_{c};k\right)&\rightarrow H^j \left(f^{-1}_{a};k \right)
\end{align*}
are isomorphisms for each $d\in\mathbb{Z}$.

\section{Examples}\label{sec:examples}

In this section we compute simple examples which illustrate the benefits of working with the derived and microlocal perspectives. We will compute three different complexes of sheaves, each obtained via a triangulated (simplicial) map, $g$, $h$, and $l$, from $S^2\lor S^1$ and $D^2$ to $S^2$. We will then illustrate the discrete microlocal Morse theorem for each of these complexes of sheaves, relative to a discrete Morse function, $f$, on the triangulation poset $\Lambda$ of $S^2$. All computations in this section are done over $k=\Z_2$.

We consider triangulations of two maps on the wedge sum of a sphere and a circle, with values in a sphere. Both maps send $S^2$ identically to $S^2$. The first map, $g$, sends the circle to the point where it touches the sphere, and $h$ sends it to the equator. We start by triangulating the spaces as shown in Figure~\ref{fig:ex_wedge-circle-sphere}. This gives us two posets, and two order preserving maps, $g,h: \Sigma \rightarrow \Lambda$. Both maps are determined by mapping vertices. Both send $0,1,2,3,4$ identically, and then $g$ sends $5,6 \overset{g}{\mapsto} 4$, whereas $h$ sends $5\overset{h}{\mapsto} 1$ and $6\overset{h}{\mapsto} 3$. The third map we consider, $l$, from the closed disk to the sphere, is obtained by identifying all points along the boundary of the closed disk. We triangulate the closed disk according to Figure~\ref{fig:ex_wedge-circle-sphere}. Then, the map $l$ is the identity map on the vertices 0, 1, 2, 3, and 4, and $5,6 \overset{l}{\mapsto} 4$. 

\begin{figure}[htb]
    \begin{subfigure}{.95\textwidth}
    \centering
    \includegraphics[width=.85\textwidth]{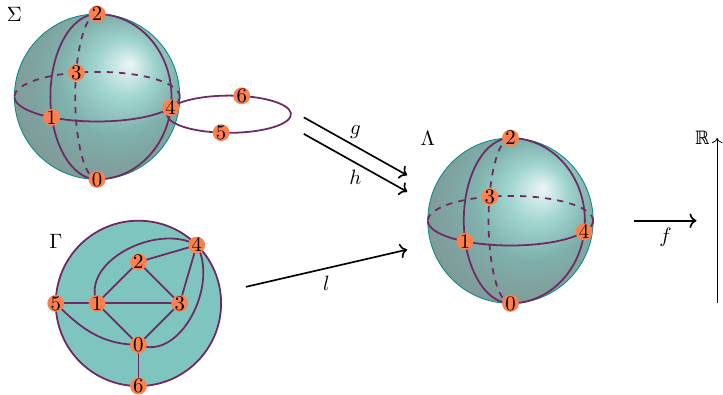}

    \caption{}
    \label{fig:ex_wedge-circle-sphere_geom}
    
    \end{subfigure}
    \vspace{3mm}
    
    \begin{subfigure}{.95\textwidth}
        \centering
        \footnotesize
        \includegraphics[width=.55\textwidth]{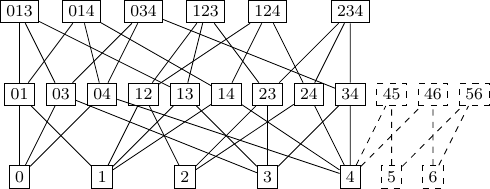}

        \caption{}
        \label{fig:ex_wedge-circle-sphere_poset}
    \end{subfigure}
    
        \caption{(a) Triangulations of $S^2\lor S^1$, the closed disk $D^2$, and $S^2$. The face relations give rise to the posets $\Sigma$, $\Gamma$, and $\Lambda$, respectively. (b) The poset $\Sigma$ when both solid and dashed lines are considered, the poset $\Lambda$ when only solid lines are considered.}
        \label{fig:ex_wedge-circle-sphere}
    \end{figure}
     
    Following the approach of traditional discrete Morse theory, we choose a partition of $\Lambda$ into intervals, and choose a total order (which we denote by $\preceq$) on the quotient poset $\Pi$ by taking the alphabetical order of the labelling in Figure~\ref{fig:Hasse-diagrams}.  When we view $\Pi$ as a totally ordered set, we interpret the corresponding quotient map $f:\Lambda\rightarrow \Pi$ as a discrete Morse function, i.e., an order preserving map from $\Lambda$ to the real line, and denote by $f^{-1}_{\preceq X}$ the sublevel set $\setdef{\lambda\in\Lambda}{f(\lambda) \preceq X}$.
    
    \begin{figure}[H]
    \centering
    \begin{subfigure}{.47\textwidth}
        \small
        \includegraphics[width=.85\textwidth]{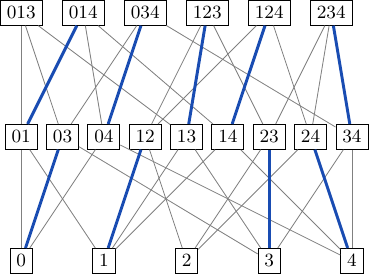}
        \label{fig:Sphere-Hasse}
        \caption{}
    \end{subfigure}\hfill
    \begin{subfigure}{.47\textwidth}
        \footnotesize
        \includegraphics[width=.85\textwidth]{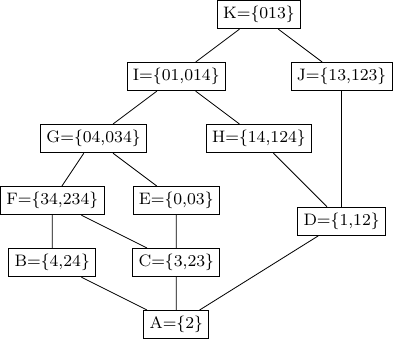}
        \label{fig:Quotient-Hasse}
        \caption{}
    \end{subfigure}
    \caption{(a) the Hasse diagram of $\Lambda$, with partition by locally closed sets. Each locally closed set has cardinality two, and is illustrated by a blue edge connecting its elements. (b) the Hasse diagram of the corresponding quotient poset $\Pi$. }
    \label{fig:Hasse-diagrams}
\end{figure}

\begin{figure}[H]
    \centering
    \begin{subfigure}{.24\textwidth}
        \centering
        \includegraphics[width=.8\textwidth]{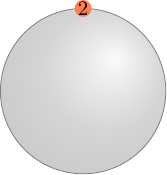}
            
        \caption{}
    \end{subfigure}
    \begin{subfigure}{.24\textwidth}
        \centering
        \includegraphics[width=.8\textwidth]{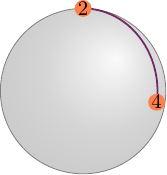}
        
        \caption{}
    \end{subfigure}
    \begin{subfigure}{.24\textwidth}
        \centering
        \includegraphics[width=.8\textwidth]{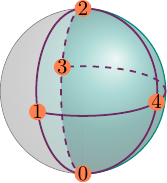}
        \caption{}
    \end{subfigure}
    \begin{subfigure}{.24\textwidth}
        \centering
        \includegraphics[width=.8\textwidth]{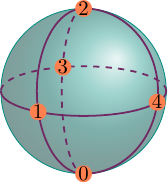}
        \caption{}
    \end{subfigure}
    \caption{(a) sublevel set $f^{-1}_{\preceq A}$. (b) sublevel set $f^{-1}_{\preceq B}$. (c) sublevel set $f^{-1}_{\preceq I}$. (d) sublevel set $f^{-1}_{\preceq K}$.}
    \label{fig:sublevelsets}
\end{figure}

From now on we work with the posets, and we only refer back to the original spaces when interpreting the results. We denote the elements in the posets by the string of vertices that define the given simplex, e.g., $014$ is the triangle with edges $01$, $04$, $14$ and vertices $0$, $1$, $4$. The notation does not distinguish whether we talk about elements in $\Sigma$, $\Gamma$, or $\Lambda$; in cases where the distinction is not clear from the context, we use subscripts, e.g., $014_\Sigma\in\Sigma$ and $014_\Lambda\in\Lambda$.

We start by computing injective resolutions of the constant sheaves $k_\Sigma$ and $k_\Gamma$, which, in an abuse of notation, we denote again by $k_\Sigma$ and $k_\Gamma$. Then we compute $\Rfunc g_\ast k_\Sigma$, $\Rfunc h_\ast k_\Sigma$, and $\Rfunc l_\ast k_\Gamma$. We give a visualization of several intervals in $\mu\supp^!\Rfunc h_\ast k_\Sigma$ and $\mu\supp^\ast \Rfunc h_\ast k_\Sigma$. We compute various sets of critical elements of the Morse function $f$ (relative to each of the above complexes of sheaves), and finally compute Betti numbers for hypercohomology groups corresonding to filtrations induced by $f$. 

\paragraph{The injective resolution of $k_\Sigma$} We compute the minimal injective resolution of $k_\Sigma$ iteratively using Algorithm~\ref{algo:injective_resolution_tail}, starting with a labeled matrix with one column labeled by a virtual element larger than everything in $\Sigma$, nine rows labeled by the maximal elements of $\Sigma$, and all entries $1$ (which will, however, \emph{not} be a part of the complex). Alternatively, we can view the same computation as Algorithm~\ref{algo:pullback_of_complex_of_injectives} computing the pullback $\Rfunc p^\ast(\dots\rightarrow 0 \rightarrow k \rightarrow 0 \rightarrow \dots)$ with respect to the constant map $p: \Sigma\rightarrow \mathrm{pt}$. Either way this yields a minimal complex of injective sheaves \[
   \cdots\rightarrow 0\rightarrow I^0 \xrightarrow{\eta^0} I^1 \xrightarrow{\eta^1} I^2\rightarrow 0 \rightarrow \cdots
\]
with the maps represented by the following two labeled matrices:

\noindent\begin{minipage}{\textwidth}
    \centering
    \includegraphics[width=140mm]{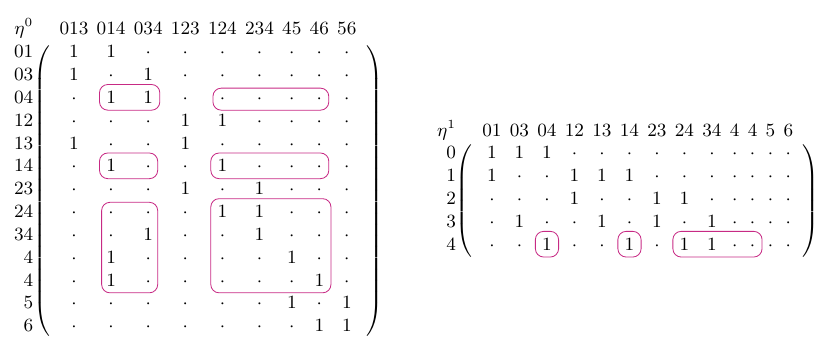}
\end{minipage}

Notice first that the complex is, indeed, minimal -- the only non-empty matrix of the form $\eta^d[\sigma,\sigma]$ is $\eta^1[4,4]=0$. We highlighted the submatrices $\eta^0(4)=\eta^0[\St 4, \St 4]$, as the vertex $4$ is a particularly interesting part of the example.

\paragraph{The derived pushforward $\Rfunc g_\ast k_\Sigma$} To compute $\Rfunc g_\ast k_\Sigma$, we only need to relabel the matrices and minimize (see Section~\ref{sec:pushforward_algo}). To relabel $\eta^0$, $\eta^1$ according to $g$, we need to perform the following changes:
\begin{itemize}
    \item The columns $45$, $46$, $56$ in $\eta^0$ are all relabeled to $4$,
    \item the rows $5$, $6$ in $\eta^0$ are both relabeled to $4$,
    \item the columns $5$, $6$ in $\eta^1$ are both relabeled to $4$.
\end{itemize}
For a moment, we denote the new labeled matrices (which do not necessarily represent a minimal complex) by $\tilde\eta^d$. We perform Algorithm~\ref{algo:peeling} on the matrices $\tilde\eta^d$ to minimize the complex. The above changes created a new non-empty $4$-block within $\tilde\eta^0$. We still have $\tilde{\eta}^1[4,4]=0$, but $\tilde\eta^0[4,4]$ is now a non-trivial matrix. We reduce $\tilde\eta^0[4,4]$, which is the $4\times 3$ submatrix in the bottom-right corner of $\tilde\eta^0$: \[
    \includegraphics[width=70mm]{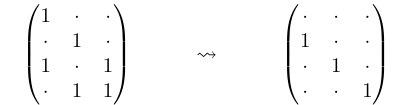}
\]
For the last three rows that are non-zero in the reduced submatrix, we perform further column operations to clear out all remaining places with $1$ in the whole matrix. Note that even though it is not necessary, the first of the four rows labeled by $4$ is now also zero, because the operations used to reduce the submatrix were performed on the whole matrix. We should now perform corresponding operations also on $\tilde\eta^{-1}$ and $\tilde\eta^{1}$. However, $\tilde\eta^{-1}=0$, and all column operations on $\tilde\eta^{1}$ only act on a zero submatrix, so nothing changes. Finally, we remove the last three columns and last three rows in $\tilde\eta^0$, and the last three columns in $\tilde\eta^1$. The derived pushforward $\Rfunc g_\ast k_\Sigma$ is represented by the resulting labelled matrices.\\

\begin{center}
\scalebox{.9}{
\noindent\begin{minipage}{.47\textwidth}
    \small
    {\setlength{\arraycolsep}{2pt}
        \[
        \begin{pNiceArray}[first-row,first-col, margin=.5em]{cccccc}
            \Rfunc g_\ast\eta^0 & 013   & 014   & 034   & 123   & 124   & 234   \\
            01            & 1     & 1     & \cdot & \cdot & \cdot & \cdot \\
            03            & 1     & \cdot & 1     & \cdot & \cdot & \cdot \\
            04            & \cdot & 1     & 1     & \cdot & \cdot & \cdot \\
            12            & \cdot & \cdot & \cdot & 1     & 1     & \cdot \\
            13            & 1     & \cdot & \cdot & 1     & \cdot & \cdot \\
            14            & \cdot & 1     & \cdot & \cdot & 1     & \cdot \\
            23            & \cdot & \cdot & \cdot & 1     & \cdot & 1     \\
            24            & \cdot & \cdot & \cdot & \cdot & 1     & 1     \\
            34            & \cdot & \cdot & 1     & \cdot & \cdot & 1     \\
            4             & \cdot & \cdot & \cdot & \cdot & \cdot & \cdot 
        \CodeAfter
            \begin{tikzpicture}
                \node () [fit=(3-2) (3-3), draw=magenta!80!black, rounded corners] {};
                \node () [fit=(3-5) (3-6), draw=magenta!80!black, rounded corners] {};
                \node () [fit=(6-2) (6-3), draw=magenta!80!black, rounded corners] {};
                \node () [fit=(6-5) (6-6), draw=magenta!80!black, rounded corners] {};
                \node () [fit=(8-2) (10-3), draw=magenta!80!black, rounded corners] {};
                \node () [fit=(8-5) (10-6), draw=magenta!80!black, rounded corners] {};
            \end{tikzpicture}
        \end{pNiceArray}
        \]
    }
    \hspace{3mm}
\end{minipage}\hfill%
\begin{minipage}{.47\textwidth}
    \small
    {\setlength{\arraycolsep}{2pt}
        \[
        \begin{pNiceArray}[first-row,first-col, margin=.5em]{cccccccccc}
            \Rfunc g_\ast\eta^1 & 01    & 03    & 04    & 12    & 13    & 14    & 23    & 24    & 34    & 4     \\
            0             & 1     & 1     & 1     & \cdot & \cdot & \cdot & \cdot & \cdot & \cdot & \cdot \\
            1             & 1     & \cdot & \cdot & 1     & 1     & 1     & \cdot & \cdot & \cdot & \cdot \\
            2             & \cdot & \cdot & \cdot & 1     & \cdot & \cdot & 1     & 1     & \cdot & \cdot \\
            3             & \cdot & 1     & \cdot & \cdot & 1     & \cdot & 1     & \cdot & 1     & \cdot \\
            4             & \cdot & \cdot & 1     & \cdot & \cdot & 1     & \cdot & 1     & 1     & \cdot 
        \CodeAfter
            \begin{tikzpicture}
                \node () [fit=(5-3) (5-3), draw=magenta!80!black, rounded corners] {};
                \node () [fit=(5-6) (5-6), draw=magenta!80!black, rounded corners] {};
                \node () [fit=(5-8) (5-10), draw=magenta!80!black, rounded corners] {};
            \end{tikzpicture}
        \end{pNiceArray}
        \]
    }
\end{minipage}
}
\end{center}

\paragraph{The derived pushforward $\Rfunc h_\ast k_\Sigma$} We compute $\Rfunc h_\ast k_\Sigma$ analogously to above. The relabeling is now as follows:
\begin{itemize}
    \item The columns $45$, $46$, and $56$ in $\eta^0$ are relabeled to $14$, $34$, and $13$, respectively,
    \item the rows $5$ and $6$ in $\eta^0$ are relabeled to $1$ and $3$, respectively,
    \item the columns $5$ and $6$ in $\eta^1$ are relabeled to $1$ and $3$, respectively.
\end{itemize}
There is now several new non-empty submatrices labeled by a single element, but all of them are zero matrices. This is guaranteed, because none of the faces that $h$ identifies are in a poset relation in $\Sigma$. The relabeled matrices, therefore, represent the minimal complex of injective sheaves $\Rfunc h_\ast k_\Sigma$:


\begin{center}
\scalebox{.9}{
\noindent\begin{minipage}{.49\textwidth}
    \small
    {\setlength{\arraycolsep}{2pt}
        \[
        \begin{pNiceArray}[first-row,first-col, margin=.5em]{ccccccccc}
            \Rfunc h_\ast\eta^0 & 013   & 014   & 034   & 123   & 124   & 234   & 14    & 34    & 13    \\
            01     & 1     & 1     & \cdot & \cdot & \cdot & \cdot & \cdot & \cdot & \cdot \\
            03     & 1     & \cdot & 1     & \cdot & \cdot & \cdot & \cdot & \cdot & \cdot \\
            04     & \cdot & 1     & 1     & \cdot & \cdot & \cdot & \cdot & \cdot & \cdot \\
            12     & \cdot & \cdot & \cdot & 1     & 1     & \cdot & \cdot & \cdot & \cdot \\
            13     & 1     & \cdot & \cdot & 1     & \cdot & \cdot & \cdot & \cdot & \cdot \\
            14     & \cdot & 1     & \cdot & \cdot & 1     & \cdot & \cdot & \cdot & \cdot \\
            23     & \cdot & \cdot & \cdot & 1     & \cdot & 1     & \cdot & \cdot & \cdot \\
            24     & \cdot & \cdot & \cdot & \cdot & 1     & 1     & \cdot & \cdot & \cdot \\
            34     & \cdot & \cdot & 1     & \cdot & \cdot & 1     & \cdot & \cdot & \cdot \\
            4      & \cdot & 1     & \cdot & \cdot & \cdot & \cdot & 1     & \cdot & \cdot \\
            4      & \cdot & 1     & \cdot & \cdot & \cdot & \cdot & \cdot & 1     & \cdot \\
            1      & \cdot & \cdot & \cdot & \cdot & \cdot & \cdot & 1     & \cdot & 1     \\
            3      & \cdot & \cdot & \cdot & \cdot & \cdot & \cdot & \cdot & 1     & 1   
        \CodeAfter
            \begin{tikzpicture}
                \node () [fit=(3-2) (3-3), draw=magenta!80!black, rounded corners] {};
                \node () [fit=(3-5) (3-8), draw=magenta!80!black, rounded corners] {};
                \node () [fit=(6-2) (6-3), draw=magenta!80!black, rounded corners] {};
                \node () [fit=(6-5) (6-8), draw=magenta!80!black, rounded corners] {};
                \node () [fit=(8-2) (11-3), draw=magenta!80!black, rounded corners] {};
                \node () [fit=(8-5) (11-8), draw=magenta!80!black, rounded corners] {};
            \end{tikzpicture}
        \end{pNiceArray}
        \]
    }
\end{minipage}\hspace{5mm}%
\begin{minipage}{.49\textwidth}
    \small
    {\setlength{\arraycolsep}{2pt}
        \[
        \begin{pNiceArray}[first-row,first-col, margin=.5em]{ccccccccccccc}
            \Rfunc h_\ast\eta^1 & 01    & 03    & 04    & 12    & 13    & 14    & 23    & 24    & 34    & 4     & 4     & 1     & 3     \\
            0      & 1     & 1     & 1     & \cdot & \cdot & \cdot & \cdot & \cdot & \cdot & \cdot & \cdot & \cdot & \cdot \\
            1      & 1     & \cdot & \cdot & 1     & 1     & 1     & \cdot & \cdot & \cdot & \cdot & \cdot & \cdot & \cdot \\
            2      & \cdot & \cdot & \cdot & 1     & \cdot & \cdot & 1     & 1     & \cdot & \cdot & \cdot & \cdot & \cdot \\
            3      & \cdot & 1     & \cdot & \cdot & 1     & \cdot & 1     & \cdot & 1     & \cdot & \cdot & \cdot & \cdot \\
            4      & \cdot & \cdot & 1     & \cdot & \cdot & 1     & \cdot & 1     & 1     & \cdot & \cdot & \cdot & \cdot 
        \CodeAfter
            \begin{tikzpicture}
                \node () [fit=(5-3) (5-3), draw=magenta!80!black, rounded corners] {};
                \node () [fit=(5-6) (5-6), draw=magenta!80!black, rounded corners] {};
                \node () [fit=(5-8) (5-11), draw=magenta!80!black, rounded corners] {};
            \end{tikzpicture}
        \end{pNiceArray}
        \]
    }
\end{minipage}
}
\end{center}

\paragraph{The injective resolution of $k_\Gamma$ and the derived pushforward $\Rfunc l_\ast k_\Gamma$} Here we provide the labeled matrices describing the injective resolution of $k_\Gamma$, and its pushforward $\Rfunc l_\ast k_\Gamma$. Note that no rows labeled by the simplices on the boundary -- $45$, $46$, $56$, $4$, $5$, $6$ -- appear in the injective resolution of $k_\Gamma$.

    \noindent\begin{minipage}{\textwidth}
            \centering
        \includegraphics[width=160mm]{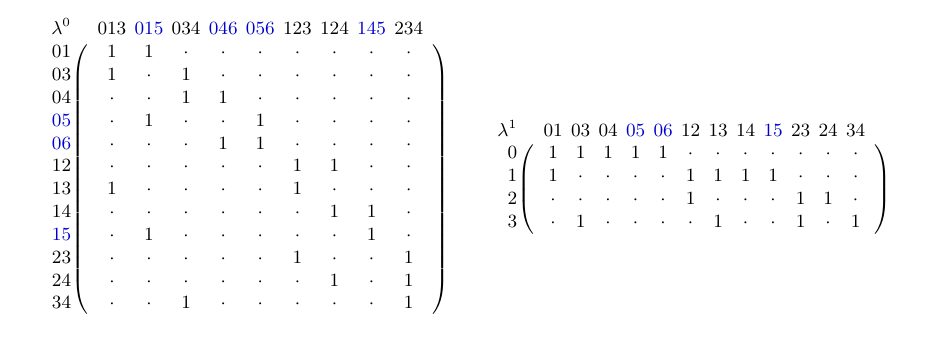}
    \end{minipage}

    To get the pushforward $\Rfunc l_\ast k_\Gamma$, we need to relabel the matrices according to $l$, and minimize. We highlighted in blue the labels that change. Recall that the map $l$ is defined on vertices by mapping $5,6\mapsto 4$ and other vertices identically.

    \noindent\begin{minipage}{\textwidth}
        \centering
        \includegraphics[width=160mm]{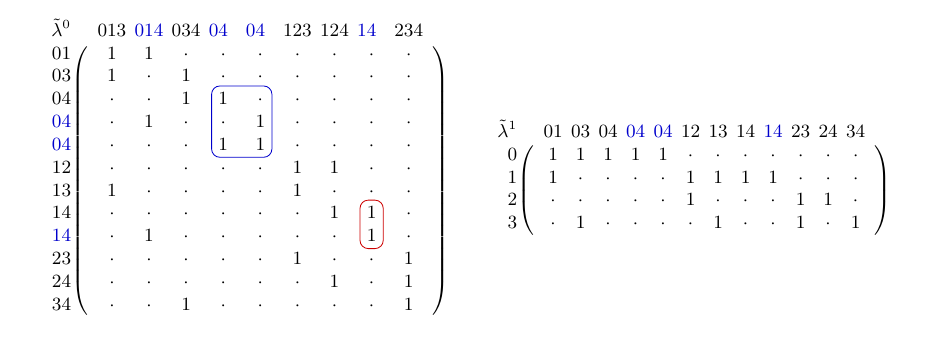}
    \end{minipage}

    We highlight the non-trivial 04- and 14-block in blue and red, respectively. Due to their ranks, we see we need to remove 2 columns and 2 rows from the blue 04-block, and 1 column and 1 row from the red 14-block. After the Gaussian elimination and deletion as described in Algorithm~\ref{algo:peeling}, we get the following matrices.

    \begin{center}
    \scalebox{.9}{
        \noindent
        \begin{minipage}{.47\textwidth}
            \vspace{3mm}
            \small
            {\setlength{\arraycolsep}{2pt}
                \[
                \begin{pNiceArray}[first-row,first-col, margin=.5em]{cccccc}
                    \Rfunc l_\ast\lambda^0   & 013   & 014   & 034   & 123   & 124   & 234   \\
                    01                 & 1     & 1     & \cdot & \cdot & \cdot & \cdot \\
                    03                 & 1     & \cdot & 1     & \cdot & \cdot & \cdot \\
                    04                 & \cdot & 1     & 1     & \cdot & \cdot & \cdot \\
                    12                 & \cdot & \cdot & \cdot & 1     & 1     & \cdot \\
                    13                 & 1     & \cdot & \cdot & 1     & \cdot & \cdot \\
                    14                 & \cdot & 1     & \cdot & \cdot & 1     & \cdot \\
                    23                 & \cdot & \cdot & \cdot & 1     & \cdot & 1     \\
                    24                 & \cdot & \cdot & \cdot & \cdot & 1     & 1     \\
                    34                 & \cdot & \cdot & 1     & \cdot & \cdot & 1     
                \CodeAfter
                \end{pNiceArray}
                \]
            }
            \hspace{3mm}
        \end{minipage}\hfill%
        \begin{minipage}{.47\textwidth}
            \small
            {\setlength{\arraycolsep}{2pt}
                \[
                \begin{pNiceArray}[first-row,first-col, margin=.5em]{ccccccccc}
                    \Rfunc l_\ast\lambda^1 & 01    & 03    & 04    & 12    & 13    & 14    & 23    & 24    & 34    \\
                    0                & 1     & 1     & 1     & \cdot & \cdot & \cdot & \cdot & \cdot & \cdot \\
                    1                & 1     & \cdot & \cdot & 1     & 1     & 1     & \cdot & \cdot & \cdot \\
                    2                & \cdot & \cdot & \cdot & 1     & \cdot & \cdot & 1     & 1     & \cdot \\
                    3                & \cdot & 1     & \cdot & \cdot & 1     & \cdot & 1     & \cdot & 1     
                \end{pNiceArray}
                \]
            }
        \end{minipage}
    }
    \end{center}

\vspace{5mm} 
Now that we computed the derived push-forward of several maps, we have three examples of (complexes of) sheaves on the triangulation of the sphere. We will now use microlocal Morse theory to analyze these examples. First, we choose our analogue of a discrete Morse function: an order preserving map, $f$ (see Figure~\ref{fig:ex_wedge-circle-sphere}), from the face-relation poset of the triangulation of the sphere to the real line (or, more precisely, to a totally ordered set). We then compute which fibers $f^{-1}(X)$ lie in the discrete microsupport of each sheaf. For this example we choose $f$ to be an honest discrete Morse function in the sense of \cite{Forman1998,Forman2002} (see Figure \ref{fig:Hasse-diagrams}). However, Theorem \ref{thm:microlocal-morse} and Theorem \ref{thm:micro-morse-ineq} make no assumptions on $f$ other than the requirement that it is order preserving. 

\paragraph{Intervals in the discrete microsupport}
Giving a full list of all of the locally closed subsets of $\Lambda$ in the microsupport of $\Rfunc g_\ast k_\Sigma$ (or $\Rfunc h_\ast k_\Sigma$) is cumbersome and mostly unnecessary. In practice, we are mostly interested in the locally closed subsets which arise as fibers of a discrete Morse function (for example, the fibers of $f$). Below we will compute and visualize the intersection of $\mu\supp^! \Rfunc h_\ast k_\Sigma$ with several intervals of interest.

\begin{figure}[H]
    \centering
    \footnotesize
    \renewcommand{\arraystretch}{1.3}
    \begin{NiceTabular}{|p{.15cm}||*{5}{p{.2cm}|}*{9}{p{.3cm}|}*{6}{p{.45cm}|}}
     \hline
     \multicolumn{21}{c}{Intervals in $\mu\supp^! \Rfunc h_\ast k_\Sigma$} \\
     \hline
    &0&1&2&3&4&01&03&04&12&13&14&23&24&34&013&014&034&123&124&234\\
     \hline
     $0$&\checkmark& & & & &\ding{55} &\ding{55}&\ding{55}&&&&&&&\ding{55}&\ding{55}&\ding{55}&&& \\
     \hline
     $1$&& \checkmark&&&&\checkmark&&&\checkmark&\ding{55}&\ding{55}&&&&\ding{55}&\ding{55}&&\ding{55}&\ding{55}&\\
     \hline
     $4$&&&& &\checkmark & & &\checkmark& & &\checkmark &&\checkmark &\checkmark&&\checkmark&\checkmark&&\checkmark&\checkmark\\
     \hline
    \end{NiceTabular}\vspace{2mm}
    
    \begin{NiceTabular}{|p{.15cm}||*{5}{p{.2cm}|}*{9}{p{.3cm}|}*{6}{p{.45cm}|}}
    \hline
     \multicolumn{21}{c}{Intervals in $\mu\supp^\ast \Rfunc h_\ast k_\Sigma$} \\
     \hline
    &0&1&2&3&4&01&03&04&12&13&14&23& 24 &34&013&014&034&123&124&234\\
     \hline
     $0$&\checkmark& & & & &\ding{55} &\ding{55}&\ding{55}&&&&&&&\ding{55}&\ding{55}&\ding{55}&&& \\
     \hline
     $1$&& \checkmark&&&&\checkmark&&&\checkmark&\ding{55}&\ding{55}&&&&\ding{55}&\ding{55}&&\ding{55}&\ding{55}&\\
     \hline
     $4$&&&& &\checkmark & & &\ding{55}& & &\checkmark &&\ding{55} &\checkmark&&\checkmark&\checkmark&&\checkmark&\checkmark\\
     \hline
    \end{NiceTabular}
    
    \vspace{7mm}
    \centering
    \begin{subfigure}{.25\textwidth}
        \centering
        \includegraphics[height=40mm]{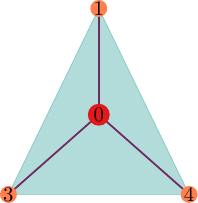}
        \caption{}
    \end{subfigure}
    \begin{subfigure}{.25\textwidth}
        \centering
        \includegraphics[height=40mm]{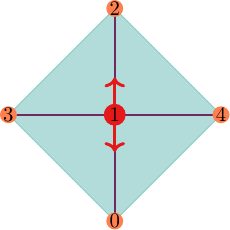}
        \caption{}
    \end{subfigure}
    \begin{subfigure}{.3\textwidth}
        \centering
        \includegraphics[height=40mm]{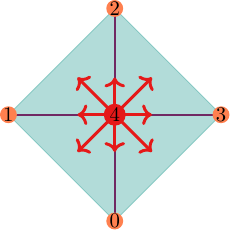}
        \caption{}
    \end{subfigure}

    \vspace{5mm}
    \begin{subfigure}{.25\textwidth}
        \centering
        \includegraphics[height=40mm]{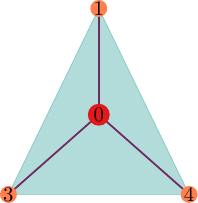}
        \caption{}  
    \end{subfigure}
    \begin{subfigure}{.25\textwidth}
        \centering
        \includegraphics[height=40mm]{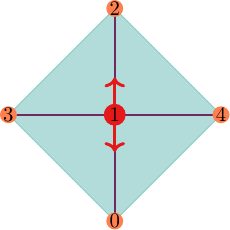}
        \caption{}
    \end{subfigure}
    \begin{subfigure}{.3\textwidth}
        \centering
        \includegraphics[height=40mm]{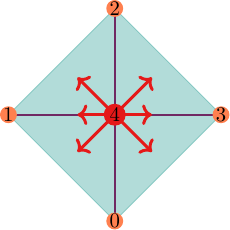}
        \caption{}
    \end{subfigure}
    
    \caption{Entries in the tables mark (an incomplete list of) intervals in the microsupport. The rows correspond to three chosen simplicies, $\sigma$, and the columns correspond to each of the simplicies, $\tau$, in $\Lambda$. Each entry of the first table is given $\checkmark$ if $[\sigma,\tau]\in\mu\supp^! \Rfunc h_\ast k_\Sigma$, \ding{55} if $[\sigma,\tau]\not\in\mu\supp^! \Rfunc h_\ast k_\Sigma$, and are left empty if $\sigma\not\le \tau$. Similarly, the second table gives the corresponding intervals for $\mu\supp^\ast \Rfunc h_\ast k_\Sigma$. Below the tables we visualize the intervals contained in the discrete microsupport. In the first row ((a), (b), and (c)) red arrows correspond to intervals $[\sigma,\tau]\coloneqq \{\gamma\in\Lambda:\sigma\le\gamma\le\tau\}$ (where $\sigma$ is the base of the arrow and $\tau$ is the head) contained in $\mu\supp^!\Rfunc h_\ast k_\Sigma$. In the second row ((d), (e), and (f)) red arrows correspond to intervals contained in $\mu\supp^\ast \Rfunc h_\ast k_\Sigma$. The first column ((a) and (d)) visualize intervals in the star of $0$, in the second column ((b) and (e)) the star of $1$, and in the third column ((c) and (f)) the star of $4$.}
        \label{fig:microsupport}
\end{figure}

\paragraph{Critical elements}
Here we provide a table of $(\Rfunc g_\ast k_\Sigma,f,\ast)$, $(\Rfunc g_\ast k_\Sigma,f,!)$, $(\Rfunc h_\ast k_\Sigma,f,\ast)$, $(\Rfunc h_\ast k_\Sigma,f,!)$, $(\Rfunc l_\ast k_\Gamma,f,\ast)$, and $(\Rfunc l_\ast k_\Gamma,f,!)$-critical elements of $\Pi$. Below we give more details of the four computations for the pair $B=\{4,24\}$. We fix $j:B \hookrightarrow \Lambda$ to be the inclusion map.
\begin{figure}[H]
    \centering
    \footnotesize
    \renewcommand{\arraystretch}{1.3}
    \begin{NiceTabular}{ |p{2cm}||*{11}{p{.5cm}|}}
        \hline
        \multicolumn{12}{c}{Critical Elements} \\
        \hline
        &A&B&C&D&E&F&G&H&I&J&K\\
        \hline
        $(\Rfunc g_\ast  k_\Sigma, f,!)$   & \checkmark& \checkmark& & & & & & & & &\checkmark\\
        $(\Rfunc g_\ast  k_\Sigma, f,\ast)$  & \checkmark& \checkmark& & & & & & & & & \checkmark\\
        \hline
        $(\Rfunc h_\ast k_\Sigma, f,!)$ & \checkmark& \checkmark& \checkmark&\checkmark & &\checkmark & & \checkmark& &\checkmark & \checkmark\\
        $(\Rfunc h_\ast k_\Sigma, f,\ast)$ & \checkmark& & \checkmark& \checkmark& &\checkmark & &\checkmark & &\checkmark &\checkmark \\
        \hline
        $(\Rfunc l_\ast k_\Gamma, f,!)$   & \checkmark& \checkmark& & & & & & & & &\checkmark\\
        $(\Rfunc l_\ast k_\Gamma, f,\ast)$  & \checkmark& \checkmark& & & & & & & & & \checkmark\\
        \hline
    \end{NiceTabular}
    \caption{Entries in the table mark critical elements of the quotient poset $\Pi$, for various notions of critical corresponding to row labels. }
    \label{fig:critical-elements}
\end{figure}

\paragraph{Computing critical elements: $(\Rfunc g_\ast k_\Sigma,f,\ast)$} Here we check whether $B=\{4,24\}$ is in $\mu\supp^\ast \Rfunc g_\ast k_\Sigma$. By definition, this happens iff $\mathbb{H}^d \left(\Rfunc j_!\Rfunc j^\ast \Rfunc g_\ast k_\Sigma\right)\neq 0$, where $j:B \hookrightarrow \Lambda$.

We first need to compute the pullback $\Rfunc j^\ast (\Rfunc g_\ast k_\Sigma)$ using Algorithm~\ref{algo:pullback_of_complex_of_injectives} as described in Section~\ref{sec:pullback_algo}. According to the algorithm, we copy $\Rfunc g_\ast\eta^0$ and $\Rfunc g_\ast\eta^1$ as $\gamma^{-1}$ and $\gamma^0$, viewing them as matrices labeled by the poset cylinder $\Lambda\sqcup_j B$, and then run the updating step from Algorithm~\ref{algo:injective_resolution_tail} for elements in $B$. It is, however, enough to only copy the parts above $B$ in $\Lambda\sqcup_j B$, i.e., the submatrices labeled by $\St 4$, which we highlighted in the matrices above. Altogether we get the following matrices $\gamma^d$, where one row was added to each of them in the updating step:

\noindent
\begin{minipage}{\textwidth}
    \centering
    \includegraphics[width=120mm]{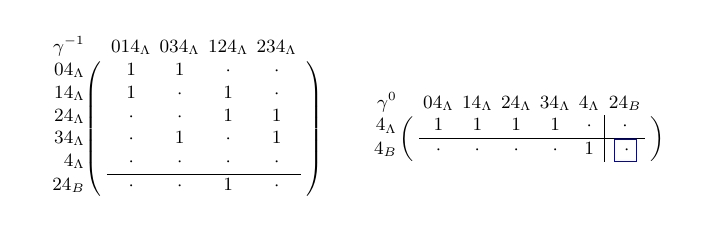}
\end{minipage}

We only keep the highlighted submatrix $\gamma^0[B, B]$. It already represents a minimal complex of injective sheaves, and no further minimization is needed. Altogether, we have \[
    \Rfunc j^\ast (\Rfunc g_\ast k_\Sigma) = \cdots \rightarrow 0 \rightarrow [24] \xrightarrow{0} [4] \rightarrow 0 \rightarrow \cdots.
\]

Next, we need to compute proper pushforward $\Rfunc j_!(\Rfunc j^\ast \Rfunc g_\ast k_\Sigma)$. The general construction is described by Algorithm~\ref{algo:proper_pushforward_of_complex_of_injectives} in Section~\ref{sec:proper_pushforward_algo}. We first copy the $1\times 1$ matrix from above, and then run the updating step for all elements in $\Cl B\setminus B = \{2\}$. This forces us to add one row in the first matrix, but nothing else in the following. We get \[
     \Rfunc j_!(\Rfunc j^\ast \Rfunc g_\ast k_\Sigma) = \cdots \rightarrow 0 \rightarrow [24] \xrightarrow{\text{\footnotesize $\begin{pmatrix}
         0 \\
         1
     \end{pmatrix}$}} [4]\oplus [2] \rightarrow 0 \rightarrow \cdots.
\]
Finally,\[
    \mathbb{H}^d \left(\Rfunc j_!\Rfunc j^\ast \Rfunc g_\ast k_\Sigma\right) = \begin{cases}
        k \text{, if $d=1$} \\
        0 \text{ otherwise.}
    \end{cases}
\]
The hypercohomology is non-trivial, which means that $B \in \mu\supp^\ast \Rfunc g_\ast k_\Sigma$.

\paragraph{Computing critical elements: $(\Rfunc g_\ast k_\Sigma,f,!)$} Next, we check whether $B=\{4,24\}$ is in $\mu\supp^! \Rfunc  g_\ast k_\Sigma$, i.e., whether $\mathbb{H}^d \left(\Rfunc j^! \Rfunc g_\ast k_\Sigma\right)\neq 0$. As discussed in Section~\ref{sec:proper_pullback_algo}, computing the proper pullback is easy -- we only need to consider the submatrices $\Rfunc g_\ast\eta^d[B,B]$. We get \[
\Rfunc j^! \Rfunc g_\ast k_\Sigma = \cdots \rightarrow 0 \rightarrow 0 \rightarrow [24]\oplus[4]\xrightarrow{\text{\footnotesize $\begin{pmatrix}
    1 & 0
\end{pmatrix}$}} [4] \rightarrow 0 \rightarrow \cdots,\]
the hypercohomology of which is \[
    \mathbb{H}^d \left(\Rfunc j^! \Rfunc g_\ast k_\Sigma\right) = \begin{cases}
        k \text{, if $d=1$} \\
        0 \text{ otherwise.}
    \end{cases}
\]
Therefore, $B\in \mu\supp^! \Rfunc g_\ast k_\Sigma$.

\paragraph{Computing critical elements: $(\Rfunc h_\ast k_\Sigma,f,\ast)$} We follow the same computation as above, but for $h$. For the pullback, we get matrices

\noindent
\begin{minipage}{\textwidth}
    \centering
    \includegraphics[width=140mm]{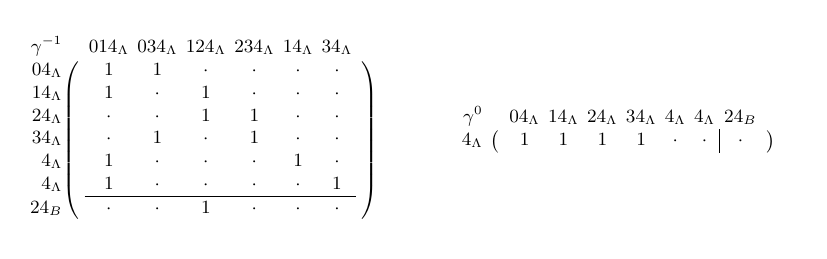}
\end{minipage}

\noindent The submatrices $\gamma^d[B,B]$ are all empty with one label in degree $0$, which yields \[
    \Rfunc j^\ast (\Rfunc h_\ast k_\Sigma) = \cdots \rightarrow 0 \rightarrow [24] \rightarrow 0 \rightarrow 0 \rightarrow \cdots.
\]
As above, to compute the proper pushforward, we start with the matrices we just computed, and perform the update step for $2$. We need to add a row in the matrix going from degree~$0$, which had one column labeled by $24$ and no rows. Hence, \[
    \Rfunc j_! (\Rfunc j^\ast \Rfunc h_\ast k_\Sigma) = \cdots \rightarrow 0 \rightarrow [24] \xrightarrow{(1)} [2] \rightarrow 0 \rightarrow \cdots.
\]
The hypercohomology of this complex is trivial, so $B\notin \mu\supp^\ast \Rfunc h_\ast k_\Sigma$.

\paragraph{Computing critical elements: $(\Rfunc h_\ast k_\Sigma,f,!)$} The matrices $\Rfunc h_\ast\eta_0[B,B]$ and $\Rfunc h_\ast\eta_1[B,B]$ give \[
    \Rfunc j^! \Rfunc h_\ast k_\Sigma = \cdots \rightarrow 0 \rightarrow 0 \rightarrow [24]\oplus[4]\oplus[4]\xrightarrow{\text{\footnotesize $\begin{pmatrix}
    1 & 0 & 0
\end{pmatrix}$}} [4] \rightarrow 0 \rightarrow \cdots,
\]
with hypercohomology \[
    \mathbb{H}^d \left(\Rfunc j^! \Rfunc g_\ast k_\Sigma\right) = \begin{cases}
        k^2 \text{, if $d=1$} \\
        0 \text{ otherwise,}
    \end{cases}
\]
and hence $B\in \mu\supp^! \Rfunc h_\ast k_\Sigma$.

\paragraph{Filtrations and hypercohomology} In Figure \ref{fig:Betti-numbers} we list Betti numbers for hypercohomology groups associated to the filtration of $\Lambda$ by sublevel sets of $f$. As Theorem~\ref{thm:microlocal-morse} implies, the value in column $X$ for the sublevel filtration differs from the value in its left neighbor only if $X$ is a critical element for the corresponding notion of microlocal support as shown in the table above. Similarly, for the superlevel filtration, non-critical columns $X$ have the same value as their right neighbors.

\begin{figure}[htb]
    \centering
    \footnotesize
    \renewcommand{\arraystretch}{1.3}
    \begin{NiceTabular}{ |p{2.3cm}||*{11}{p{.7cm}|}}
         \hline
         \multicolumn{12}{c}{Rank of hypercohomology groups in degree 0, 1, and 2} \\
         \hline
         $\dim\mathbb{H}^d\left(-\right)$ & A&B&C&D&E&F&G&H&I&J&K\\
         \hline
          $\Rfunc i_{Z}^! \Rfunc g_\ast k_\Sigma$  & 0,0,1& 0,1,0& & & & & & & & &1,0,0\\
        $\Rfunc i_{\preceq x}^! \Rfunc g_\ast k_\Sigma$  & 0,0,1& 0,1,1&0,1,1 & 0,1,1& 0,1,1& 0,1,1& 0,1,1&0,1,1 &0,1,1 &0,1,1 &1,1,1\\
         $\Rfunc i_{\succeq x}^\ast \Rfunc g_\ast k_\Sigma$ & 1,1,1&1,1,0&1,0,0 &1,0,0 &1,0,0 &1,0,0 &1,0,0 & 1,0,0&1,0,0 & 1,0,0&1,0,0\\
         \hline
         $\Rfunc i_{Z!}\Rfunc i_{Z}^\ast \Rfunc g_\ast k_\Sigma$  & 1,0,0& 0,1,0& & & & & & & & & 0,0,1\\
          $\Rfunc i_{\preceq x}^\ast \Rfunc g_\ast k_\Sigma$ & 1,0,0 & 1,1,0 & 1,1,0& 1,1,0& 1,1,0& 1,1,0& 1,1,0& 1,1,0& 1,1,0& 1,1,0&1,1,1\\
         \hline
        \hline

         $\Rfunc i_{Z}^! \Rfunc h_\ast k_\Sigma$ & 0,0,1& 0,2,0& 0,1,0& 0,1,0 & &1,0,0 & & 1,0,0& &1,0,0 & 1,0,0\\
          $\Rfunc i_{\preceq x}^! \Rfunc h_\ast k_\Sigma$ & 0,0,1& 0,2,1& 0,3,1&0,4,1 & 0,4,1&0,3,1 &0,3,1 & 0,2,1& 0,2,1 &0,1,1 & 1,1,1\\
          $\Rfunc i_{\succeq x}^\ast \Rfunc h_\ast k_\Sigma$ & 1,1,1& 1,1,0& 2,0,0&3,0,0 & 4,0,0&4,0,0 & 3,0,0& 3,0,0& 2,0,0&2,0,0 & 1,0,0\\
         \hline
         $\Rfunc i_{Z!}\Rfunc i_{Z}^\ast \Rfunc h_\ast k_\Sigma$& 1,0,0& & 1,0,0& 1,0,0& &0,1,0 & &0,1,0 & &0,1,0 &0,0,1 \\
         $\Rfunc i_{\preceq x}^\ast \Rfunc h_\ast k_\Sigma$ & 1,0,0&1,0,0 & 2,0,0& 3,0,0& 3,0,0&2,0,0 &2,0,0 &1,0,0 &1,0,0 &1,1,0 &1,1,1 \\
        
         \hline
         \hline
         
         $\Rfunc i_{Z}^! \Rfunc l_\ast k_\Gamma$ & 0,0,1& 0,1,0& &  & & & & & & & 1,0,0\\
          $\Rfunc i_{\preceq x}^! \Rfunc l_\ast k_\Gamma$ & 0,0,1&& && & & & &  && 1,0,0\\
          $\Rfunc i_{\succeq x}^\ast \Rfunc l_\ast k_\Gamma$ & 1,0,0& 1,1,0& 1,0,0&1,0,0 & 1,0,0&1,0,0 & 1,0,0& 1,0,0& 1,0,0&1,0,0 & 1,0,0\\
         \hline
         $\Rfunc i_{Z!}\Rfunc i_{Z}^\ast \Rfunc l_\ast k_\Gamma$& 1,0,0& 0,1,0 & & & & & & & & &0,0,1 \\
         $\Rfunc i_{\preceq x}^\ast \Rfunc l_\ast k_\Gamma$ & 1,0,0&1,1,0 & 1,1,0& 1,1,0& 1,1,0&1,1,0 &1,1,0 &1,1,0 &1,1,0 &1,1,0 &1,0,0 \\

        \hline
    \end{NiceTabular}
  
    \caption{Each entry in the table lists the dimension of hypercohomology groups in degree 0, 1, and~2 (empty entries have trivial hypercohomology). Here, $Z=f^{-1}(x)$ and $i_{\preceq x}$ ($i_{\succeq x}$) is the inclusion map of sublevel-sets (superlevel-sets) of $f$ (relative to the total order $\preceq$ on $\Pi$) into $\Lambda$, for $x$ an element of $\Pi$ corresponding to the column labels. }
    \label{fig:Betti-numbers}
\end{figure}

\newpage
\bibliographystyle{alpha}
\bibliography{bibliography}


\newpage
\appendix

\section{Background: Injective Sheaves}

\subsection{Duality between injective and projective resolutions}
\label{appendix:dualities}
We should comment on a matter of perspective and terminology. Sheaves over finite posets are closely related to several other mathematical objects studied by various research communities. Specifically, there is a great deal of work within the field of commutative algebra on minimal projective and free resolutions of modules over various kinds of algebras. Here, we choose to focus on the perspective and terminology which most closely aligns with classical sheaf theory in order to preserve intuition from that discipline. Below we briefly comment on the connections between sheaves, cosheaves, injective resolutions, and projective resolutions, with implications for computing the minimal injective hull of a sheaf.

Given a sheaf $F$ on a poset $\Pi$, let $\hat{F}$ denote a sheaf on $\Pi^\op$ defined by 
\[
\hat{F}(\pi)\coloneqq \Hom_k(F(\pi),k), 
\]
the $k$-linear functionals on $F(\pi)$, where $\hat{F}(\pi\le\tau)$ is the linear map from $\Hom_k(F(\pi),k)$ to $\Hom_k(F(\tau),k)$ defined by precomposing linear functionals on $F(\pi)$ with $F(\tau\le \pi)$. This defines an exact contravariant functor from the category of sheaves on $\Pi$ to the category of sheaves on $\Pi^\op$.  
In fact, this functor can be extended to an equivalence (of triangulated categories) between derived categories $D^b(\Pi)$ and $D^b(\Pi^\op)^\op$~\cite{Ladkani,Curry2018}. 
It is easy to show that $I$ is an injective sheaf on $\Pi$ if and only if $\hat{I}$ is a projective sheaf on $\Pi^\op$. Moreover, this contravariant functor interchanges injective and projective complexes of sheaves. If $I^\bullet\in D^b(\Pi)$, let $\hat{I}^{-\bullet}$ be the corresponding complex of projective sheaves (where the integer indices are given the opposite sign). 
It is again straightforward to show that $I^\bullet$ is an injective resolution of a sheaf $F$ on $\Pi$ if and only if $\hat{I}^{-\bullet}$ is a projective resolution of $\hat{F}$ on $\Pi^\op$. Moreover, this functor preserves minimality of the resolutions. This is to say that computing injective resolutions of sheaves on $\Pi$ is equivalent to computing projective resolutions of sheaves on $\Pi^\op$. We see two applications of this observation. First, we can compute left derived functors by taking a sheaf $F$ and simply computing the injective resolution of $\hat{F}$ on $\Pi^\op$, taking the transpose of each labeled matrix (Definition~\ref{def:labeled-matrix}) to obtain a projective resolution of $F$ on $\Pi$, and applying the desired right exact functor. Secondly, this reformulation highlights close connections between computational derived sheaf theory and commutative algebra. Particularly, theoretical results and implementations for computing minimal projective resolutions of modules over the incidence algebra of a finite poset (see~\cite{GreenSolbergZacharia}, for example) can be exploited for computations in derived sheaf theory, providing alternatives to Algorithm~\ref{algo:injective_hull_alpha} and Algorithm~\ref{algo:injective_resolution_tail} of the present paper.

\subsection{Examples of non-injective sheaves}

A sheaf $I$ is injective if any natural transformation from $A$ into it can be extended to any super-sheaf $B\hookleftarrow A$.
\begin{center}
\includegraphics[width=40mm]{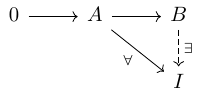}
\end{center}

Below are two simple examples of sheaves that are not injective.

\begin{example}\label{ex:non-injective_sheaf}
We describe a sheaf $F$ on a poset with two elements $\sigma\leq\tau$ which is not injective. Fix any vector space $W\neq 0$, and let $F(\sigma) = 0$ and $F(\tau) = W$. This sheaf injects in a sheaf $G$ given by $G(\sigma) = W = G(\tau)$ with $\smap G{\sigma}{\tau} = \id$. We claim that $F\overset{\id}{\rightarrow} F$ can not be extended to $G\rightarrow F$.\\
    \begin{minipage}{\textwidth}
    \centering
    \vspace{10pt}
    \includegraphics[width=100mm]{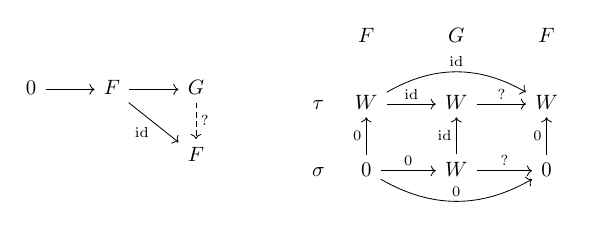}
    \vspace{10pt}
    \end{minipage}
Indeed, the triangle on the left diagram commutes iff the right diagram commutes. The only way to make the right square commute is to set both horizontal maps to $0$. But then the top triangle does not commute.
\end{example}
The same reasoning applies for any sheaf on any poset with a non-zero vector space one step above a zero vector space. Below we demonstrate one other obstruction to injectivity.

\begin{example}\label{ex:non-injective_sheaf_2}
We consider a three-element ``V'' shaped poset, a vector space $W\neq 0$, and two different endomorphisms $f,g: W\rightarrow W$. We define sheaves $A$, $B$ and $F$ as follows:

    \begin{minipage}{.95\textwidth}
    \centering
    \vspace{10pt}
    \includegraphics[width=100mm]{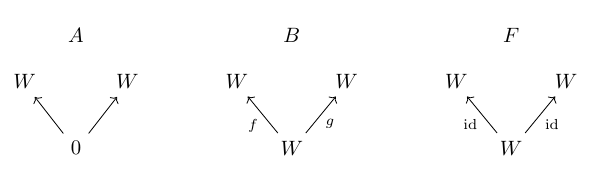}
    \vspace{10pt}
    \end{minipage}
    
    We claim that $F$ is not injective. The sheaf $A$ embeds into both $B$ and $F$ with identity maps. We ask whether we can extend the embedding $A\rightarrow F$ to a natural transformation $\beta: B\rightarrow F$, while respecting the embedding $A\rightarrow B$. The only choice we have is the bottom map $\beta_0: W\rightarrow W$, as the other two must be the identity. However, commutativity requires $f = \beta_0 = g$, which is impossible to satisfy, since $f\neq g$.
\end{example}

\subsection{Proofs of standard facts about injective sheaves}\label{app:sec:proofs_injective_sheaves}

\newtheorem*{lem:elementary_injective_sheaf}{Lemma~\ref{lem:elementary_injective_sheaf}}
\begin{lem:elementary_injective_sheaf}[{cf.\,\cite[Lemma 7.1.5]{Curry2014}}]
    Indecomposable injective sheaves are injective.
\end{lem:elementary_injective_sheaf}
\begin{proof}
    We show that, for a fixed poset $\Pi$ and $\pi\in \Pi$, $I=[\pi]$ satisfies Definition~\ref{def:injective_sheaf_2}. Given an inclusion $A\xhookrightarrow{f} B$ and a natural transformation $\alpha: A\rightarrow I$, we need to find an extension $\beta: B\rightarrow I$. For the linear map $A(\pi)\xhookrightarrow{f(\pi)} B(\pi)$, there is a projection $A(\pi)\xleftarrow{g}B(\pi)$ such that $g f(\pi)=\id_{A(\pi)}$.
    We define
    \begin{align*}
        \beta(\sigma) \coloneqq
        \begin{cases}
            \alpha(\pi)\circ g\circ B(\sigma\leq \pi) &\text{ if $\sigma\leq \pi$,} \\
            0 &\text{ otherwise.}
        \end{cases}
    \end{align*}
    For every $\sigma$, this satisfies $\beta(\sigma) f(\sigma)=\alpha(\sigma)$, because if $\sigma\leq\pi$, then\[
    \beta(\sigma) f(\sigma) = \alpha(\pi)\, g\, B(\sigma\leq \pi) f(\sigma) = \alpha(\pi)\, g\, f(\pi) A(\sigma\leq \pi) = \alpha(\pi) A(\sigma\leq \pi) = \alpha(\sigma),
    \]
    and otherwise both sides are 0.
    For the commutativity conditions, consider $\sigma\leq\tau\leq\pi$. Then\[
    \beta(\tau) \smap B \sigma \tau = \alpha(\pi)\, g\, \smap B \tau \pi \smap B \sigma \tau = \alpha(\pi)\, g\, \smap B \sigma \pi = \beta(\sigma) = \smap I \sigma \tau \beta(\sigma).
    \]
    If $\sigma\leq\tau\not\leq\pi$, then both sides are $0$.

    Note that this argument works for arbitrary vector spaces\footnote{Assuming the Axiom of Choice.}, not only finite-dimensional ones. Indeed, we only need to verify the existence of the map $g$. Consider a basis $\{b_t\}_{t\in T}$ of $B(\pi)$ such that for some subset $S\subset T$, the subset $\{b_s\}_{s\in S}$ is a basis of $\im{f(\pi)}$. Then $g$ is a composition of the projection onto $\im f(\pi)$ given by $\sum_{t\in T} \gamma_t b_t \mapsto \sum_{s\in S} \gamma_s b_s$ followed by the inverse of $f(\pi):A(\pi)\rightarrow \im f(\pi)$. Clearly, $gf(\pi)$ is the identity. The existence of the basis is easily shown with Zorn's lemma by first obtaining a maximal element, $M$, of the collection of sets of linearly independent vectors in $\im f(\pi)$, and then a maximal element of the collection of sets of linearly independent vectors in $B(\pi)$ that contain $M$.
\end{proof}

\newtheorem*{lem:coker-injective}{Lemma~\ref{lem:coker-injective}}
\begin{lem:coker-injective}[{cf.\ \cite[Lemma 1.3.1]{Shepard1985}}]
    A direct sum of injective sheaves is injective. Additionally, if $I\xhookrightarrow{\alpha} J$ is an injective natural transformation with $I,J$ injective sheaves, then $J\cong I\oplus \coker\alpha  $, and $\coker\alpha$ is an injective sheaf. 
\end{lem:coker-injective}
\begin{proof}
This proof is standard for any abelian category, we include a sketch for completeness. Suppose $I=A\oplus B$, with $A,B$ injective sheaves. Suppose $F\hookrightarrow G$ and $F\rightarrow I$. Then composition with projection gives maps $F\rightarrow A$ and $F\rightarrow B$. By injectivity of $A$ and $B$, each map extends to $G\rightarrow A$ and $G\rightarrow B$, respectively. The sum of these maps defines an extension $G\rightarrow I$, proving that $I$ is injective. The second claim follows by extending the identity map $I\rightarrow I$ to $J\rightarrow I$ by $\alpha$ and the injectivity of $J$. Then, the sum of the extension and the quotient map define an isomorphism $J\rightarrow I\oplus\coker\alpha$. The final claim follows by composing a given map $F\rightarrow\coker\alpha$ with the extension by zero map, $\coker\alpha\hookrightarrow J$, to get $F\rightarrow J$. Then, for $F\hookrightarrow G$, we define (by the injectivity of $J$) an extension $G\rightarrow J$. By post-composing with the projection map, we get the desired extension $G\rightarrow\coker\alpha$.
\end{proof}

\newtheorem*{prop:decomposition_of_injective_sheaves}{Proposition~\ref{prop:decomposition_of_injective_sheaves}}
\begin{prop:decomposition_of_injective_sheaves}[{cf.\,\cite[Lemma 7.1.6]{Curry2014}, \cite[Theorem 1.3.2]{Shepard1985}}] 
  Every injective sheaf is isomorphic to a direct sum of indecomposable injective sheaves.
\end{prop:decomposition_of_injective_sheaves}
\begin{proof}
We adapt the proof of \cite[Theorem 1.3.2]{Shepard1985} to the setting of finite posets on $n$ elements (rather than cell complexes). We fix some linear extension of the partial order, $(\pi_1,\dots,\pi_n)$, and let $\Pi_d=\left\{\pi_j\,\middle|\, j\leq d\right\}$. We will proceed with the proof by working inductively through this filtration of $\Pi$. We define support of a sheaf $I$ as\[
    \supp I \coloneqq \setdef{\pi\in\Pi}{I(\pi)\neq 0}.
\]

Assume that the result holds for injective sheaves supported on $\Pi_{d-1}$. Suppose $I$ is an injective sheaf with support contained in $\Pi_{d}$. If $\supp I\subseteq \Pi_{d-1}$, then the inductive assumption implies the result. Therefore, we are left to prove the result for $I$ such that $I(\pi_d)\neq 0$. Set $F_{\pi_d}$ to be the functor which assigns $I(\pi_d)$ to $\pi_d$ and the zero vector space to each other poset element (and the zero linear map to each poset relation). Then the identity map induces injective natural transformations
\[
F_{\pi_d}\xhookrightarrow{\alpha} I\qquad\text{and}\qquad F_{\pi_d}\hookrightarrow \bigoplus_{v\in B}[\pi_d],
\]
where $B$ is some basis of $I(\pi_d)$. Because $I$ is injective, we can extend $\alpha$ to a natural transformation
$
 \beta:  \bigoplus_{v\in B}[\pi_d]\rightarrow I.
$
It is injective, because for every $\sigma\leq\pi_d$, the linear map $I(\sigma\leq\pi_d)\beta(\sigma)=\beta(\pi_d)=\alpha(\pi_d)$ is injective.
By Lemma~\ref{lem:coker-injective}, this implies that 
\[
I\cong \coker \beta\oplus\bigoplus_{v\in B}[\pi_d],
\]
and that $\coker \beta$ is injective. Because $\supp\coker \beta\subseteq \Pi_{d-1}
$, the inductive hypothesis completes the proof.
\end{proof}

\section{The correspondence between Definition \ref{def:derived-category} and the bounded derived category of sheaves on a finite poset with the Alexandrov topology}\label{app:section:correspondence}
Here we sketch a correspondence between the bounded derived category of sheaves on a finite poset endowed with the Alexandrov topology and the derived category studied in Section \ref{sec:derived_categories}. Here we assume the reader is familiar with sheaf theory (for example, \cite{KashiwaraSchapira1994}, \cite{Bredon1997}, \cite{Curry2014}, and \cite{Shepard1985}). 

Let $X$ be a finite poset endowed with the Alexandrov topology. Let $\Mod(k_X)$ be the category of sheaves of $k$-vector spaces on $X$ (as defined in \cite{KashiwaraSchapira1994}), and 
\[
\Mod_{\fin}(k_X) = \setdef{
        \mathcal{F}\in \Mod(k_X)
    }{
        \dim_k\mathcal{F}_x<\infty \quad \forall x\in X
    },
\]
be the subcategory of sheaves with finite-dimensional stalks.
\begin{proposition}\label{lem:app:sheaves-and-representations}
        $\Mod_\fin(k_X)$ is equivalent to the category of functors from $X$ (with morphisms given by poset relations) to the category of finite dimensional $k$-vector spaces, $\Fun(X,\vect_k)$. Similarly, $\Mod_(k_X)$ is equivalent to $\Fun(X,\Vect_k)$.
\end{proposition}
\begin{proof}
   See \cite[Theorem 4.2.10]{Curry2014}.
\end{proof}

\begin{lemma}\label{lem:app:serre-subcategory}
$\Mod_\fin(k_X)$ is a Serre\footnote{In particular, also weak Serre.} subcategory of $\Mod(k_X)$. 
\end{lemma}
\begin{proof}
Assume 
\[F_0\rightarrow F_1 \rightarrow F_2 \]
is an exact sequence of sheaves on $X$, with $F_i\in \Mod_\fin(k_X)$ for $i\in\{0,2\}$. For each $x\in X$ we can apply the pullback $i_x^\ast$ along the inclusion map $i_x:\{x\}\hookrightarrow X$ to get the exact sequence (recall that $i^\ast_{x}$ is an exact functor) of $k$-vector spaces 
\[F_0(x)\rightarrow F_1(x) \rightarrow F_2(x) \]
Because $F_i(x)$ is finite-dimensional for $i\in\{0,2\}$, $F_1(x)$ is finite-dimensional. 
\end{proof}

\begin{proposition}[Prop 1.7.11 and Remark 1.7.12 in \cite{KashiwaraSchapira1994}]
If $C'$ is a weak Serre subcategory of a category $C$ such that for every monomorphism $f:a\rightarrow b$, $a\in \Ob(C')$, $b\in \Ob(C)$, there exists $g: b\rightarrow c$ with $c\in \Ob(C')$ such that $g\circ f$ is a monomorphism, then $D^\textrm{b}(C') \simeq D^\textrm{b}_{C'}(C)$, where $D^\textrm{b}_{C'}(C)$ denotes the full triangulated subcategory of $D^\textrm{b}(C)$ with cohomology objects in $C'$.  
\end{proposition}

Let $D^\textrm{b}(k_X)$ be the bounded derived category of sheaves on $X$ (as defined in \cite{KashiwaraSchapira1994}) and $D_\fin^\textrm{b}(k_X)$ be the subcategory whose cohomology objects are in $\Mod_\fin(k_X)$.
\begin{theorem}
    $D_\fin^b(k_X)$ is equivalent to $D^b(X)$ (with $D_\fin^b(k_X)$ defined above and $D^b(X)$ in Definition \ref{def:derived-category} ). 
\end{theorem}
\begin{proof}
For any monomorphism $f:\mathcal{F}\rightarrow \mathcal{G}$ with $\mathcal{F}\in \Mod_\fin(k_X)$, there exists a morphism $g:\mathcal{G}\rightarrow\mathcal{H}$, with $\mathcal{H}\in \Mod_\fin(k_X)$, such that $g\circ f$ is a monomorphism. Namely, choose $\mathcal{H}$ to be the injective hull of $\mathcal{F}$ (Definition \ref{def:injective_hull}). Then, because $\mathcal{H}$ is also an injective object in $\Mod(k_X)$ (\cite[Lemma 7.1.5]{Curry2014}, see also Lemma~\ref{lem:elementary_injective_sheaf}), a morphism $g$ exists with the desired properties. Therefore, by \cite[Prop 1.7.11]{KashiwaraSchapira1994} and the previous lemma (\ref{lem:app:serre-subcategory}), $D_\fin^b(k_X)$ is equivalent to $D^b(\Mod_\fin(k_X))$. By the above proposition (\ref{lem:app:sheaves-and-representations}), $D^b(\Mod_\fin(k_X))$ is equivalent to $D^b(\Fun(X,\vect_k))$. The result then follows by applying the replacement functor which maps a given complex $\mathcal{C}\in D^b(\Fun(X,\vect_k))$ to its minimal injective resolution in $D^b(X)$. 
\end{proof}

\section{Derived Category and Injective Resolution of a Sheaf}

\subsection{Computing a Basis for the Space of Morphisms in a Derived Category}\label{app:basis_for_morphisms}

We describe the spaces of all morphisms and all null-homotopic morphisms between two complexes of injective sheaves as solutions to systems of linear equations. Those systems of equations are smaller compared to the case of general sheaves, because for injective sheaves, we can take advantage of the labeled matrix representations of natural transformations. This said, the size of the systems of linear equations which define these morphisms are impractically large; the goal of this exposition is to give an explicit description of the space of morphisms between two objects in the \emph{derived category of sheaves}, including a way (albeit impractical) to compute their basis.

We fix two complexes of injective sheaves, $(I^\bullet,\eta^\bullet)$, $(J^\bullet,\lambda^\bullet)$, and denote by $m(I^d)$ and $m(J^d)$ the number of indecomposable injective summands in $I^d$ and $J^d$, respectively. We assume that both complexes are represented as complexes of labeled matrices, and we denote the matrices by the same symbols as the natural transformations they represent, e.g., $\eta^d$ is a $m(I^{d+1})\times m(I^{d})$ labeled matrix.

We can describe every morphism $\alpha^\bullet:I^\bullet\rightarrow J^\bullet$ as another collection of labeled matrices. The dimensions are fixed: $\alpha^d$ is a $m(J^d)\times m(I^d)$ matrix with columns labeled as columns of $\eta^d$, and rows labeled as columns of $\lambda^d$. Therefore, we have $\sum_d m(J^d)\cdot m(I^d)$ variables which will be used to define $
\alpha^\bullet$. We observe that the definition for $\alpha^\bullet$ to represent a morphism yields two sets of linear constrains:
\begin{itemize}
    \item \emph{poset constrains:} $\alpha^d[i,j]=0$ whenever $\sigma\not\leq\pi$ for $\sigma$ the label of the $i$-th row and $\pi$ the label of the $j$-th column of $\alpha^d$,
    \item \emph{commutativity constrains:} $\alpha^{d+1}\cdot \eta^d - \lambda^d\cdot\alpha^d = 0$ yields $m(J^{d+1})$ linear constrains for each $d$.
\end{itemize}

The system of linear equations for null-homotopic morphisms is the one above together with new variables and new constrains. We have $\sum_d m(I^d)\cdot m(J^{d-1})$ new variables for the matrices $h^d$ representing the natural transformations $I^d\rightarrow J^{d-1}$ as in Definition~\ref{def:null_homotopic}. Again, we label each $h^d$ accordingly to fit with the labeling of $\eta^\bullet$, $\lambda^\bullet$. We get the following two new sets of linear constrains:

\begin{itemize}
    \item \emph{poset constrains:} $h^d[i,j]=0$ whenever $\sigma\not\leq\pi$ for $\sigma$ the label of the $i$-th row and $\pi$ the label of the $j$-th column of $h^d$,
    \item \emph{homotopy constrains:} $\alpha^{d} - h^{d+1}\cdot\eta^d - \lambda^d\cdot h^{d-1} = 0$ yields $m(J^d)$ linear constrains for each $d$.
\end{itemize}

Altogether, we showed the following.
\begin{proposition}
    Computing a basis for the space of morphisms in derived category, $\Hom_{D^b(\Pi)}(I^\bullet,J^\bullet)$ (see Definition \ref{def:derived-category}), reduces to computing a basis for the space of solutions to the above system of linear equations.
\end{proposition}

\subsection{Injective Resolution via Order Complex}\label{app:injective_resolution_order_complex}

We present a non-inductive construction of a (not necessarily minimal) injective resolution of a given sheaf $F$. This section generalizes, from the constant sheaf to general sheaves, Lemma 1.3.17 of~\cite{Ladkani2008}. On a practical level, this allows one to compute the cohomology of right derived functors without first computing the full injective resolution (see Section \ref{sec:derived-functors}).

\begin{definition}
    The \emph{order complex}, $K(\Pi)$, of a finite poset $\Pi$, is the poset of strictly increasing chains $\pi_\bullet = \pi_0<\pi_1<\cdots<\pi_d$ in $\Pi$. The order complex has the structure of an abstract simplicial complex. Let $K^d(\Pi)$ denote the $d$-simplices of $K(\Pi)$, i.e. the set of chains $\pi_0<\pi_1<\cdots<\pi_d$ of length $d+1$.  
\end{definition}

\paragraph*{The construction} Given a sheaf $F$ on $\Pi$, we define (recalling the notation of Definition \ref{def:indecomposable_injective_sheaf}) 
\[
I^d \coloneqq   \bigoplus_{\pi_\bullet\in K^d(\Pi)}[\pi_0]^{ F(\pi_d)}.
\]
Suppose $\pi_\bullet\in K^d(\Pi)$ and $\pi_\bullet <_1\tau_\bullet$ (i.e. the chain $\pi_\bullet$ is obtained from the chain $\tau_\bullet$ by removing one element). Then $\pi_0\ge \tau_0$ and $\pi_d\le\tau_{d+1}$. Therefore, 
\[
F(\pi_d\le\tau_{d+1})\in \Hom (F(\pi_d),F(\tau_{d+1}))\cong \Hom \left( [\pi_0]^{F(\pi_d)}, [\tau_0]^{ F(\tau_{d+1})}\right) . 
\]
\begin{definition}[{\cite[Definition 6.1.9]{Curry2014}}]\label{def:signed-incidence-relation}
    A \emph{signed incidence relation} on $K(\Pi)$ is an assignment to each pair of simplices $\sigma_\bullet,\gamma_\bullet\in K(\Pi)$ a number $[\sigma_\bullet:\gamma_\bullet]\in\{-1,0,1\}$, such that
    \begin{enumerate}
        \item if $[\sigma_\bullet:\gamma_\bullet]\neq 0$, then $\sigma_\bullet<_1\gamma_\bullet$, and 
        \item for each pair of simplices $(\sigma_\bullet,\gamma_\bullet)$,  \[\sum_{\tau_\bullet\in K(\Pi)}[\sigma_\bullet:\tau_\bullet][\tau_\bullet:\gamma_\bullet]=0.\] 
    \end{enumerate}
\end{definition}
Using this identification, we define the natural transformation
$
    \eta^d:I^d\rightarrow I^{d+1}
$
so that on the $\pi_\bullet$-summand $[\pi_0]^{F(\pi_d)}$ of $I^d$, 
\begin{align*}
    \eta^d\vert_{[\pi_0]^{ F(\pi_d)}}= \sum_{\pi_\bullet<_1\tau_\bullet}[\pi_\bullet:\tau_\bullet]F(\pi_d\le \tau_{d+1}),
\end{align*}
where $F(\pi_d\le \tau_{d+1})\in\Hom \left( [\pi_0]^{ F(\pi_d)}, [\tau_0]^{F(\tau_{d+1})}\right)$ is understood to have its codomain as the $\tau_\bullet$-summand of $I^{d+1}$. Let $\alpha:F\hookrightarrow I^0$ be the natural transformation given by the maps 
\[\alpha(\sigma)
\coloneqq\ 
\sum_{\sigma\le\gamma}F(\sigma\le\gamma)
\ :\ 
F(\sigma)\xhookrightarrow{\ \ \ \ }  \bigoplus_{\sigma\le\gamma}F(\gamma)
\ \reflectbox{ $\coloneqq$ } I^0(\sigma).\]

The following lemma is a generalization of \cite[Theorem 6.12]{Curry2018}. 
\begin{lemma}\label{lemma:subdivision}
Let $t:K(\Pi)\rightarrow \Pi$ denote the poset map which assigns each chain $\pi_0<\pi_1<\cdots<\pi_d$ to its terminal element $\pi_d$, and $p:\Pi\rightarrow\{\text{pt}\}$. Given a sheaf $F$ on a finite poset $\Pi$, we have the following isomorphism of sheaf cohomology
\[
H^j\left(\Pi;F\right) \cong H^j\left(K(\Pi);t^\ast F\right)\text{, for each $j$.}
\]
\end{lemma}
\begin{proof}
When $\Pi$ is a cell complex, the theorem follows, for example, from \cite[Corollary 2.7.7 (iv)]{KashiwaraSchapira1994}. We include a more general proof for arbitrary finite posets below. Let $I^\bullet$ be an injective resolution of $F$. Because $t^\ast$ defines an exact functor from the category of sheaves on $\Pi$ to the category of sheaves on $K(\Pi)$, we have that 
\[
0\rightarrow t^\ast F\rightarrow t^\ast I^0\rightarrow t^\ast I^1\rightarrow \cdots\rightarrow t^\ast I^n\rightarrow 0
\]
is an exact sequence. We claim that each $t^\ast I^d$ is acyclic:
\[
H^j(\Rfunc (p\circ t)_\ast \left( t^\ast I^d\right))=0, \text{ for }j>0.
\]
Indeed, it is enough to prove that for each indecomposable injective sheaf $[\pi]$ on $\Pi$, the sheaf $t^\ast [\pi]$ is acyclic. Because $t^\ast[\pi]$ is the constant sheaf on the order complex of the downward closure, $\Cl\pi\coloneqq  \{\tau\in\Pi:\tau\le\pi\}$, of $\pi$ in $\Pi$, we have 
\[
H^j(\Rfunc (p\circ t)_\ast \left(t^\ast[\pi]\right))\cong H^j(|K(\Cl\pi)|;k).
\]
Notice that $K(\Cl\pi)$, as a simplicial complex, is equal to the cone of the simplicial complex $K(\Cl\pi-\pi)$. Therefore, $|K(\Cl\pi)|$ is contractible, $H^j(|K(\Cl\pi)|;k)=0$ for $j>0$, and $t^\ast[\pi]$ is acyclic. The complex $t^\ast I^\bullet$ is therefore an acyclic resolution of $t^\ast F$, and by standard results of homological algebra (for example, \cite[Theorem 4.1]{Bredon1997}), we have
\[
H^j(K(\Pi); t^\ast F)\cong H^j\big( (p \circ t)_\ast (t^\ast I^\bullet)\big) \cong H^j(\Pi;F). 
\]
\end{proof}

\begin{theorem}\label{thm:non-inductive}
The complex $0\rightarrow F\xrightarrow{\alpha} I^0\xrightarrow{\eta^0}I^1\xrightarrow{\eta^1} \cdots$ defined above is an injective resolution of $F$. 
\end{theorem}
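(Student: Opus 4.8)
The plan is to verify three things: first, that each $I^d$ as defined is an injective sheaf; second, that $\alpha$ is an injection; and third, that the augmented complex $0\to F\xrightarrow{\alpha} I^0\xrightarrow{\eta^0} I^1\to\cdots$ is exact. The first point is immediate: each $I^d$ is by construction a direct sum of indecomposable injective sheaves $[\pi_0]^{F(\pi_d)}$, so it is injective by Lemma~\ref{lem:elementary_injective_sheaf} and Lemma~\ref{lem:coker-injective}. For $\alpha$ being injective, note that for any $\sigma$ the component of $\alpha(\sigma)$ indexed by the length-$0$ chain $\sigma$ itself (viewed as the $\sigma$-summand with $F(\sigma\le\sigma)=\id$) is the identity on $F(\sigma)$, so $\alpha(\sigma)$ is injective; one also checks naturality of $\alpha$ from the cocycle condition for $F$, and that $\eta^0\circ\alpha=0$ using property~2 of the signed incidence relation together with the fact that for a length-$0$ chain the relevant incidence sum telescopes appropriately.

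The heart of the argument is exactness, and the key observation is that the complex $I^\bullet$ we have written down is, up to reindexing, precisely the \v{C}ech-type / order-complex resolution whose cohomology is computed by Lemma~\ref{lemma:subdivision}. More precisely, I would argue that for each $\sigma\in\Pi$, the complex of stalks
\[
0\to F(\sigma)\to I^0(\sigma)\to I^1(\sigma)\to\cdots
\]
is exact. Unwinding the definition, $I^d(\sigma)=\bigoplus_{\pi_\bullet\in K^d(\Pi):\,\pi_0\ge\sigma}F(\pi_d)$, which one recognizes as the degree-$d$ cochains of the simplicial cochain complex (with coefficients in $t^\ast F$ restricted to the appropriate subcomplex) of the order complex of the \emph{star} $\St\sigma=\{\gamma:\gamma\ge\sigma\}$, since the chains $\pi_\bullet$ with $\pi_0\ge\sigma$ are exactly the simplices of $K(\St\sigma)$. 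The differential $\eta^d$ restricted to the $\sigma$-stalk is exactly the simplicial coboundary built from the signed incidence relation, twisted by the maps $F(\pi_d\le\tau_{d+1})$. Hence the cohomology of the $\sigma$-stalk complex is $H^\bullet(K(\St\sigma);t^\ast F|_{\St\sigma})$, which by Lemma~\ref{lemma:subdivision} applied to the poset $\St\sigma$ equals $H^\bullet(\St\sigma;F|_{\St\sigma})$. But $\St\sigma$ has a minimum element $\sigma$, hence is a poset with a terminal object for the Alexandrov topology in the sense that sheaf cohomology is concentrated in degree $0$ and equals the stalk $F(\sigma)$: the global sections of any sheaf on a poset with minimum $\sigma$ is $F(\sigma)$, and higher cohomology vanishes because $\St\sigma$ is ``contractible'' (the constant functor at $\sigma$ gives a deformation). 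Combined with the augmentation $F(\sigma)\to I^0(\sigma)$ this gives exactness of the $\sigma$-stalk complex, for every $\sigma$, which is exactness of the complex of sheaves.

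The step I expect to be the main obstacle is making the identification of the $\sigma$-stalk complex with the simplicial cochain complex of $K(\St\sigma)$ fully precise, including checking that the signs from the signed incidence relation and the twisting maps $F(\pi_d\le\tau_{d+1})$ assemble into an honest cochain complex whose cohomology is the sheaf cohomology appearing in Lemma~\ref{lemma:subdivision} — in other words, carefully matching our $\eta^d$ against the differential implicit in that lemma's proof. A cleaner alternative, which I would pursue if the bookkeeping becomes unwieldy, is to avoid stalkwise computation altogether: observe that $I^\bullet$ is exactly the complex obtained by applying the pushforward $t_\ast$ (or rather the explicit Godement-style resolution underlying Lemma~\ref{lemma:subdivision}) to $t^\ast F$ on $K(\Pi)$, so that Theorem~\ref{thm:non-inductive} becomes a formal consequence of Lemma~\ref{lemma:subdivision} and the fact that the analogous statement holds for the order complex (where it is the classical fact that this \v{C}ech complex resolves the sheaf). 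Either way, injectivity of the $I^d$ is free, and the real content is exactness, which is entirely powered by Lemma~\ref{lemma:subdivision}.
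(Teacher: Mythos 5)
Your proposal is correct and follows essentially the same route as the paper: reduce to stalkwise exactness, identify the stalk complex at $\sigma$ with the (compactly supported) cochain complex of $t^\ast(F|_{\St\sigma})$ on the order complex $K(\St\sigma)$, invoke Lemma~\ref{lemma:subdivision} to pass to sheaf cohomology of $F|_{\St\sigma}$ on $\St\sigma$, and conclude from the fact that $\St\sigma$ has minimum $\sigma$. The only point worth tightening is your final vanishing argument: rather than appealing to ``contractibility'' of $\St\sigma$, the paper's cleaner observation is that global sections on a poset with minimum $\sigma$ is exactly evaluation at $\sigma$, which is an exact functor, so applying it to an injective resolution $J^\bullet$ of $F|_{\St\sigma}$ gives $H^j(\St\sigma;F|_{\St\sigma})=H^j(J^\bullet(\sigma))=0$ for $j>0$.
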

\begin{proof}
By construction, each sheaf $I^j$ is injective, and each map $\eta^j$ (as well as $\alpha$) is a natural transformation. It remains to show that the sequence is an exact complex. It is enough to show that for each $\pi\in\Pi$, the sequence $0\rightarrow F(\pi)\xrightarrow{\alpha(\pi)}I^0(\pi)\xrightarrow{\eta^0(\pi)}I^1(\pi)\xrightarrow{\eta^1(\pi)}\cdots$ is exact.

We first define a functor $t^\ast$ from the category of sheaves on $\Pi$ to the category of sheaves on $K(\Pi)$. To each sheaf $F$ on $\Pi$, let $t^\ast F$ be the sheaf on $K(\Pi)$ defined by associating to each chain $\tau_\bullet=\tau_0<\cdots<\tau_d$ the `terminal' vector space: $t^\ast F(\tau_\bullet) \coloneqq F(\tau_d)$ and $t^\ast F(\tau_\bullet\le\gamma_\bullet)=F(\tau_d\le \gamma_j)$ for $\tau_\bullet \in K^d(\Pi)$, $\gamma_\bullet\in K^j(\Pi)$. Because $t^\ast(\eta)(\tau_\bullet) \coloneqq \eta(\tau_d):t^\ast F(\tau_\bullet)\rightarrow t^\ast G(\tau_\bullet)$ for any natural transformation $\eta:F\rightarrow G$, it is clear that $t^\ast$ is an exact functor. 

Notice that $0\rightarrow F(\pi)\xrightarrow{\alpha(\pi)}I^0(\pi)\xrightarrow{\eta^0(\pi)}I^1(\pi)\xrightarrow{\eta^1(\pi)}\cdots$ is identical to the compactly supported cochain complex of the sheaf $t^\ast (F\vert_{\St\pi})$ on the simplicial complex $K(\St\pi)$ \cite[Definition 6.2.1 and Definition 6.2.3]{Curry2014}. Therefore, exactness in $I^0(\pi)$ follows from \[
    \ker\eta_0(\pi) 
    \cong \Gamma(t^\ast(F\vert_{\St\pi}))
    \cong F(\pi)
    \cong \im \alpha(\pi),
\] and it remains to prove a vanishing property for the cohomology of $t^\ast(F\vert_{\St\pi})$, namely that \[H^j(K(\St\pi);t^\ast (F\vert_{\St\pi}))=0\] for $j>0$. By Lemma \ref{lemma:subdivision}, $H^j(K(\St\pi);t^\ast (F\vert_{\St\pi}))\cong H^j(\St\pi;F\vert_{\St\pi})$. 
Let $J^\bullet$ be an injective resolution of the sheaf $F\vert_{\St\pi}$ on the poset $\St\pi$. Then $H^j(\St\pi;F\vert_{\St\pi})$ is, by definition, the $j$-th cohomology group of the complex of vector spaces $J^\bullet(\pi)$, which, by the exactness of $J^\bullet$, is zero for $j>0$. 

\end{proof} 

This construction also allows us to prove Proposition~\ref{prop:persistent-homology}

\newtheorem*{prop:persistent-homology}{Proposition~\ref{prop:persistent-homology}}
\begin{prop:persistent-homology}
Suppose $f:\Sigma\rightarrow \Lambda$ is a simplicial map. As sheaves on $\Lambda$, 
\begin{align*}
    H^d\Rfunc f_\ast k_\Sigma  &\cong H^d(|f^{-1}(\St\blank)|;k)
\end{align*}
where $H^d(|f^{-1}(\St\blank)|;k)$ is the sheaf defined by associating the simplex $\lambda$ to the singular cohomology of the geometric realization of $f^{-1}(\St\lambda)$ (with linear maps induced by inclusion). 
\end{prop:persistent-homology}
\begin{proof}
Let $I^\bullet$ be the injective resolution described above. Then $\Rfunc f_\ast I^\bullet (\lambda)$ is the complex consisting of only the linear combinations of generators for indecomposable sheaves $[\pi]\subset I^\bullet$ such that $\pi\in f^{-1}(\St\lambda)$ and chain maps $\eta^\bullet(f^{-1}(\St\lambda))$. After forgetting the matrix labels representing $\Rfunc f_\ast I^\bullet (\lambda)$, we obtain the simplicial cochain complex of $K(f^{-1}(\St\lambda))$. The cohomology groups of this cochain complex are isomorphic to the singular cohomology of the geomtric realization of $f^{-1}(\St\lambda)$: \[H^d\Rfunc f_\ast k_\Sigma(\lambda) \cong H^d(|f^{-1}(\St\lambda)|;k),\] and the linear maps $H^d\Rfunc f_\ast(\kappa\le\lambda)$ are the usual cohomology maps \[H^d(|K(f^{-1}(\St\kappa))|;k)\rightarrow H^d(|K(f^{-1}(\St\lambda))|;k)\] induced by inclusion (cf.\,\cite[Chapter II Proposition 5.11]{Iversen}). 
\end{proof}

\subsection{Injective Hull Computation}\label{app:injective_hull}

We describe an explicit construction of the minimal injective hull of a sheaf $F$ on a poset $\Pi$. An injective hull of $F$ consists of an injective sheaf $I$ and an inclusion map of $F$ into $I$. To construct the minimal injective hull of $F$, we first find $M_F$, the subsheaf of $F$ with $M_F(\pi)$ the space of maximal vectors in $F(\pi)$ (recall Definition~\ref{def:maximal-vectors}), and zero maps between the spaces.
Recalling the notation described below Definition \ref{def:indecomposable_injective_sheaf}, we define\[
I^0 = \bigoplus_{\pi\in\Pi}[\pi]^{M_F(\pi)} \cong \bigoplus_{\pi\in\Pi}[\pi]^{\dim M_F(\pi)},
\]
where $[\pi]^{M_F(\pi)}$ is the injective sheaf with $[\pi]^{M_F(\pi)}(\sigma)=M_F(\pi)$ if $\sigma\leq\pi$, and $0$ otherwise. We can naturally include $M_F \xrightarrow{\gamma} I^0$, and extend this inclusion to $F\xrightarrow{\alpha} I^0$, using the injectivity of $I^0$. We choose the extension $\alpha=\sum_{\pi\in\Pi}\alpha_{\pi}$, where $\alpha_\pi$ is given by
\begin{align*}
    \alpha_{\pi}(\sigma) \coloneqq
    \begin{cases}
        \gamma(\pi) \circ\, \Proj_{M_F(\pi)} \circ\, F(\sigma\leq \pi) &\text{ if $\sigma\leq \pi$,} \\
        0 &\text{ otherwise.}
    \end{cases}
\end{align*}

\begin{proposition}\label{prop:construction_of_minimal_injective_hull}
    The construction above yields the minimal injective hull $F\xrightarrow{\alpha} I^0$.
\end{proposition}
\begin{proof}
We claim that $\alpha$ is injective. Let $u\in \ker\alpha(\sigma)$. Then\[
   0 = \alpha(\sigma)(u) = \sum_{\pi\in\Pi}\alpha_\pi(\sigma)(u) = \sum_{\sigma\leq\pi} \gamma(\pi) \circ\, \Proj_{M_F(\pi)} F(\sigma\leq \pi)(u),
\]
which is equivalent to $\Proj_{M_F(\pi)} F(\sigma\leq \pi)(u)=0$ for every $\pi\geq\sigma$, since the images of different $\alpha_{\pi}$ have trivial intersections. But this means that $u=0$, because every non-zero vector is either maximal or maps onto some non-zero maximal vector via the sheaf maps.

An injective hull is minimal iff the minimal injective resolution starts with it. By Proposition~\ref{lem:minimal_injective_resolution_im_condition}, this minimality is equivalent to the condition that every maximal vector of $I^0$ is in $\im\alpha$. This is satisfied, as $M_F(\pi)$ are exactly the maximal vectors in $I^0(\pi)$.
\end{proof}

We give an explicit formulation of an algorithm computing $\alpha(\pi)$ as Algorithm~\ref{algo:injective_hull_alpha}. We first fix bases in $F$. For each $\pi\in\Pi$, we fix a basis $B(\pi)=(v_1,\dots,v_l,w_{l+1},\dots,w_{l+d})$, with $l,k$ dependent on $\pi$, such that $(w_{l+1},\dots,w_{l+d})$ is a basis of $M_F(\pi)$---this involves finding a basis of intersection of kernels and its complement. We use the same bases for $I^0(\pi)$. We assume that all maps $F(\pi\leq\sigma)$ are expressed with respect to those bases.

To express $\alpha(\pi)$ with respect to the fixed bases, we need to find the image of each $v_1,\dots,v_l, w_{1+l},\dots,w_{l+d}$. The maximal vectors $w_j$ are mapped identically to $M_F(\pi)\subseteq I_0(\pi)$. For the other vectors, $v_j$, we need to find $u_j\coloneqq \sum_{\pi<\sigma}\Proj_{M_F(\sigma)}F(\pi\leq\sigma)(v_j)$. The algorithm does that while avoiding redundant computations. Each $\sigma$ is added to $D$ at most once. If it is added, then $D[\sigma]=F(\pi\leq\sigma)(v_j)$, and $\Proj_{M_F(\pi)}F(\pi\leq\sigma)(v_j)$ is added to $u_j$. If $\sigma$ is never added to $D$, then $F(\pi\leq\sigma)(v_j)=0$. In the end, $u_j$ contains the desired sum.

\begin{algorithm}[htb]
    \caption{Minimal injective hull}
    \label{algo:injective_hull_alpha}
    \hspace*{\algorithmicindent} \textbf{Input:} $F$ with fixed bases as described above, $\pi\in\Pi$ \\
    \hspace*{\algorithmicindent} \textbf{Output:} $\alpha(\pi)$ as a $\left(\sum_{\pi\leq\sigma}\dim M_F(\sigma)\right)\times(\dim F(\pi))$ matrix
\begin{algorithmic}[1]
    \Procedure{Incl}{$\sigma$, $w\in M_F(\sigma)$}
        \State return inclusion of $w$ into $\bigoplus_{\pi<\tau}M_F(\tau)$ \Comment{just adding extra zeros}
    \EndProcedure
    \For{$v_j\in\{v_1,\dots,v_l\}$} \Comment{$(v_1,\dots,v_l,w_{l+1},\dots,w_{l+d})$ is the fixed basis of $F(\pi)$}
        \State $D\gets$ empty dictionary \Comment{keys: elemets $\sigma\in\Pi$, values: vectors in $F(\sigma)$}
        \State $D[\pi]\gets v_j$
        \State $u_j \gets 0$ \Comment{vector of length $\sum_{\pi<\tau}\dim M_F(\tau)$}
        \ForEach{$\sigma\geq\pi$ in non-decreasing order}
            \If{$\sigma\in\textrm{Keys}(D)$ and $D(\sigma)\neq 0$}
                \State $w\gets D[\sigma]$
                \Comment{$D[\sigma] = F(\pi\leq\sigma)(v_j)$}
                \ForEach{$\tau>_1\sigma$}
                    \If{$\tau\not\in\mathrm{Keys}(D)$}
                        \State $D[\tau]\gets F(\sigma\leq\tau)(w)$
                        \State $u_j\gets u_j + \textsc{Incl}(\tau,\, \Proj_{M_F(\tau)}(D[\tau]))$
                    \EndIf
                \EndFor
            \EndIf
            \State clear $D[\sigma]$ \Comment{optional, just to free up memory}
        \EndFor
        \State return a block matrix
            $\begin{pmatrix}%
                U & 0 \\%
                0 & I %
            \end{pmatrix}$,
            where $U=(u_1|\dots|u_l)$, \newline \hspace*{5.5cm} and $I$ is the identity matrix of order $d=\dim M_F(\pi)$
    \EndFor
\end{algorithmic}
\end{algorithm}

\subsection{Geometrical Meaning of Multiplicities in the Minimal Resolution} \label{app:thm:multiplicities}

\newtheorem*{thm:multiplicities}{Theorem~\ref{thm:multiplicities}}
\begin{thm:multiplicities}
Let $\Sigma$ be a finite simplicial complex, $k_\Sigma$ the constant sheaf on $\Sigma$ (viewed as a poset with the face relation), and $H_c^\bullet (|\St\sigma|;k)$ be the singular cohomology with compact support of the geometric realization of $\St\sigma$. Then
\[
m^d_{k_\Sigma}(\sigma)= \dim H_c^{d+\dim\sigma}(|\St\sigma|;k).
\]
\end{thm:multiplicities}
\begin{proof}
Let $I^\bullet$ be an injective resolution of the constant sheaf on $\Sigma$. By Corollary \ref{cor:proper-pull-back-multiplicity}, 
\[
m_{ k_\Sigma}^d(\sigma) = \dim H^d \Rfunc i_\sigma^! I^\bullet.\]
For $\sigma\in\Sigma$, let \[C(\sigma)\coloneqq \sigma^0\ast \left\{ \tau\setminus\sigma \,\middle|\, \tau\in\St\sigma\setminus\{\sigma\} \right\}\] be the cone of the link $\left\{ \tau\setminus\sigma \,\middle|\, \tau\in\St\sigma\setminus\{\sigma\} \right\}$ (where $\tau\setminus\sigma$ denotes set difference when $\tau$ and $\sigma$ are viewed as sets of vertices in $\Sigma$), with cone point given by the vertex $\sigma^0$. Then $\St_{C(\sigma)}\sigma^0 \cong \St_\Sigma\sigma$ as posets. Therefore, $I^\bullet\vert_{\St_\Sigma\sigma}$, viewed as a complex of sheaves over $\St_{C(\sigma)}\sigma^0$, is an injective resolution of the constant sheaf on $\St_{C(\sigma)}\sigma^0$. Let $J^\bullet$ be the injective resolution of the constant sheaf on the simplicial complex $C(\sigma)$, defined non-inductively via order complex in \ref{app:injective_resolution_order_complex}. Then $I^\bullet\vert_{\St_\Sigma\sigma}$ is quasi-isomorphic to $J^\bullet\vert_{\St_{C(\sigma)}\sigma^0}$.
By definition, $i^!_{\sigma^0}(J^\bullet)$ is exactly the complex of vector spaces used to compute compactly supported cohomology of the constant sheaf on $\St_{K(C(\sigma))}(\sigma^0)$, the star of $\sigma^0$ in the barycentric subdivision $ K(C(\sigma))$ of $C(\sigma)$ (see \cite[Definition 6.2.1]{Curry2014} and \cite{Shepard1985}). Because the geometric realisation $|\St_{K(C(\sigma))}(\sigma^0)|$ is homeomorphic to $|\St_{C(\sigma)}\sigma^0|$, the cohomology groups of this complex are isomorphic to $H_c^{d}(|\St_{C(\sigma)}(\sigma^0)|;k)$. Finally, because $|\St_\Sigma \sigma|$ is homeomorphic to $\mathbb{R}^{\dim\sigma}\times |\St_{C(\sigma)}\sigma^0|$, 
\[
m^d_{k_\Sigma}(\sigma) = \dim H_c^{d}(|\St_{C(\sigma)}(\sigma^0)|;k) = \dim H_c^{d+\dim\sigma}(|\St_\Sigma (\sigma)|;k).
\]
\end{proof}

\end{document}